\numberwithin{equation}{section}
\newcommand{\R}{\ensuremath{\mathbb{R}}}
\newcommand{\N}{\ensuremath{\mathbb{N}}}
\newcommand{\Z}{\ensuremath{\mathbb{Z}}}
\newtheorem{definition}{Definition}[section]
\newtheorem{theorem}{Theorem}[section]
\newtheorem{corollary}{Corollary}[theorem]
\newtheorem{proposition}[theorem]{Proposition}
\newtheorem{lemma}[theorem]{Lemma}
\theoremstyle{definition} 
\newtheorem{remark}[theorem]{Remark} 
\newtheorem{Construction}{Construction}[section]
\newcommand{\mb}[1]{\mathbf{#1}}
\DeclareMathOperator*{\colim}{colim}
\DeclareMathOperator*{\Ob}{Ob}
\DeclareMathOperator*{\Int}{Int}
\DeclareMathOperator*{\Diff}{Diff}
\DeclareMathOperator*{\Emb}{Emb}
\DeclareMathOperator*{\BDiff}{BDiff}
\DeclareMathOperator*{\Ab}{Ab}
\DeclareMathOperator*{\lk}{\text{lk}}
\DeclareMathOperator*{\fr}{\text{fr}}
\DeclareMathOperator*{\Id}{\text{Id}}
\DeclareMathOperator*{\Bun}{Bun}
\DeclareMathOperator*{\Fr}{Fr}
\begin{document}

\title{Homological Stability for diffeomorphism groups of high dimensional handlebodies}

\author{Nathan Perlmutter}

  \email{nperlmut@stanford.edu}
  
  \address{Stanford University Department of Mathematics, Building 380, Stanford, California,  94305, USA} 

\begin{abstract}
In this paper we prove a homological stability theorem for the diffeomorphism groups of high dimensional manifolds with boundary, with respect to forming the boundary connected sum with the product $D^{p+1}\times S^{q}$ for $|q - p| < \min\{p, q\} - 2$. 
In a recent joint paper with B. Botvinnik, we prove that there is an isomorphism  
$$\colim_{g\to \infty}H_{*}(\BDiff((D^{n+1}\times S^{n})^{\natural g}, \; D^{2n}); \Z) \; \cong \; H_{*}(Q_{0}BO(2n+1)\langle n\rangle_{+}; \Z)$$
in the case that $n \geq 4$. 
By combining this ``stable homology'' calculation with the homological stability theorem of this current paper, we obtain the isomorphism $H_{k}(\BDiff((D^{n+1}\times S^{n})^{\natural g}, \; D^{2n}); \Z) \cong  H_{k}(Q_{0}BO(2n+1)\langle n\rangle_{+}; \Z)$ in the case that $k \leq \tfrac{1}{2}(g - 4)$.
\end{abstract}

\maketitle

\section{Introduction} \label{Introduction} 
\subsection{Main result}
Let $M$ be a smooth, compact, $m$-dimensional manifold with non-empty boundary. 
Fix an $(m-1)$-dimensional disk $D^{m-1} \hookrightarrow \partial M$. 
We denote by $\Diff(M, D^{m-1})$ the group of self-diffeomorphisms of $M$ that restrict to the identity on a neighborhood of $D^{m-1}$, topologized in the $C^{\infty}$-topology.
Let $\BDiff(M, D^{m-1})$ denote the \textit{classifying space} of the topological group $\Diff(M, D^{m-1})$. 
Choose an $m$-dimensional manifold $V$ with non-empty boundary and let $M\natural V$ denote the \textit{boundary connected sum} of $M$ and $V$. 
There is a homomorphism $\Diff(M, D^{m-1}) \longrightarrow \Diff(M\natural V, D^{m-1})$ defined by extending a diffeomorphism identically over the boundary connect-summand $V$. 
This homomorphism induces a map between the classifying spaces, and iterating this map yields the direct system,
\begin{equation} \label{equation: direct system}
\xymatrix{
\BDiff(M, D^{m-1}) \ar[r] & \BDiff(M\natural V, D^{m-1}) \ar[r] & \cdots \ar[r] & \BDiff(M\natural V^{\natural g}, D^{m-1}) \ar[r] & \cdots 
}
\end{equation}
In this paper we study the homological properties of this direct system in the case when $V$ is a high-dimensional handlebody $D^{p+1}\times S^{q}$. 
Our main result can be viewed as an analogue of the homological stability theorems of Harer \cite{Ha 85}, or the theorem of Galatius and Randal-Williams \cite{GRW 14}, for high-dimensional manifolds with boundary. 

Below we state our main theorem.
For $p, q, g \in \Z_{\geq 0}$, let $V^{g}_{p, q}$ denote the $g$-fold boundary connected sum, $(D^{p+1}\times S^{q})^{\natural g}$. 
Let $d(\pi_{q}(S^{p}))$ denote the \textit{generating set length} of the homotopy group $\pi_{q}(S^{p})$, which is the quantity,
$
d(\pi_{q}(S^{p})) \; = \; \min\{ k \in \N \; | \; \text{there exists an epimorphism} \; \Z^{\oplus k} \rightarrow \pi_{q}(S^{p}) \}.
$
We let $\kappa(\partial M)$ and $\kappa(M, \partial M)$ denote the degrees of connectivity of $\partial M$ and $(M, \partial M)$ respectively. 
The main result of this paper is stated below.
\begin{theorem} \label{theorem: main theorem 1}
Let $M$ be a compact manifold of dimension $m$ with non-empty boundary. 
Let $p$ and $q$ be positive integers with $p + q + 1 = m$, such that inequalities,
\begin{equation} \label{equation: required theorem inequalities}
|q - p| < \min\{p, q\} - 2 \quad \text{and} \quad |q - p|  < \min\{\kappa(\partial M), \kappa(M, \partial M)\} - 1
\end{equation}
are satisfied. 
Then the homomorphism induced by the maps in (\ref{equation: direct system}),
$$
H_{k}(\BDiff(M\natural V^{g}_{p, q}, D^{m-1}); \; \Z) \; \longrightarrow \; H_{k}(\BDiff(M\natural V^{g+1}_{p, q}, D^{m-1}); \; \Z),
$$
is an isomorphism when $k \leq \frac{1}{2}(g - d(\pi_{q}(S^{p})) - 3)$ and an epimorphism when $k \leq \frac{1}{2}(g - d(\pi_{q}(S^{p})) - 1)$.
\end{theorem}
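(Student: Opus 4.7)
The plan is to follow the now-standard Galatius--Randal-Williams template for homological stability, adapted to the boundary handle-attachment setting. I would construct a semisimplicial space on which the diffeomorphism group acts with identifiable stabilizers, prove that its geometric realization is highly connected, and then feed the connectivity estimate into the spectral sequence of the group action to obtain the stability range of Theorem~\ref{theorem: main theorem 1}.

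\textbf{Step 1: the semisimplicial space of handle cores.}
For any compact $m$-manifold $N$ with a fixed disk $D^{m-1}\subset\p N$, I would define a semisimplicial space $K_\bullet(N)$ whose space of $k$-simplices consists of $(k+1)$-tuples $(e_0,\ldots,e_k)$ of pairwise disjoint smooth embeddings $e_i:D^{p+1}\times S^q\hookrightarrow N$, each restricted to agree with a fixed standard embedding on a half-disk of $D^{m-1}$, topologised in the $C^\infty$-topology. The group $\Diff(N,D^{m-1})$ acts level-wise, and by cutting out the (tubular neighborhoods of the) chosen handles one identifies the stabilizer of a $k$-simplex up to homotopy with $\Diff(N',D^{m-1})$ for $N'$ diffeomorphic to $N$ with $k+1$ boundary handle-summands removed. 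In particular, for $N=M\natural V^g_{p,q}$ the stabilizer of a $k$-simplex is equivalent to $\Diff(M\natural V^{g-k-1}_{p,q},D^{m-1})$, so that the $d_1$ differential of the action spectral sequence is an alternating sum of the stabilization maps of (\ref{equation: direct system}).

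\textbf{Step 2: high connectivity of $|K_\bullet|$.}
The core technical assertion to establish is that $|K_\bullet(M\natural V^g_{p,q})|$ is at least $\tfrac{1}{2}(g-d(\pi_q(S^p))-3)$-connected. I would prove this by induction on $g$, using a standard ``bad simplices'' or ``link'' reduction: given a sphere representing a partial simplex, first produce an immersed $q$-sphere in $N$ in the desired relative homotopy class (possible by the connectivity hypotheses on $\p M$ and $(M,\p M)$ in (\ref{equation: required theorem inequalities})), then remove self-intersections and intersections with the existing cores by Whitney-type surgery. Precisely here the hypothesis $|q-p|<\min\{p,q\}-2$ is required so that Whitney disks can be embedded and correctly framed, and so that the tubular neighborhoods $D^{p+1}\times S^q$ of the new cores can be built with the correct framing. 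The linking obstructions between a new core and the existing $g-k-1$ handles take values in copies of $\pi_q(S^p)$; killing an arbitrary such obstruction by handle-slides consumes at most $d(\pi_q(S^p))$ of the ``spare'' handles, which is precisely the origin of the shift by $d(\pi_q(S^p))$ in the connectivity range.

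\textbf{Step 3: the spectral sequence.}
Once the connectivity of $|K_\bullet|$ is in hand, the spectral sequence converging to the homology of the homotopy orbits of $\Diff(M\natural V^g_{p,q},D^{m-1})$ on $|K_\bullet|$, combined with the stabilizer identification of Step~1, yields the isomorphism and surjectivity ranges in the statement by the usual inductive argument on $g$ (comparing the $E^1$-page to the mapping cone of the stabilization map).

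\textbf{Main obstacle.}
Step~2 is the decisive one. Producing embedded cores with the right tubular neighborhood structure and the right linking behavior in an \emph{arbitrary} manifold $M\natural V^g_{p,q}$---rather than in a highly symmetric handlebody---forces one to use both dimensional inequalities in (\ref{equation: required theorem inequalities}) simultaneously, and the $\pi_q(S^p)$-valued linking invariants must be tracked carefully to ensure that at most $d(\pi_q(S^p))$ handles need be sacrificed when modifying configurations. Converting this algebraic input into a genuine connectivity estimate---most likely via a colored/semisimplicial resolution where the coloring records the class of the linking obstruction---is the real work of the paper.
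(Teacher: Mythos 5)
Your Steps 1 and 3 match the paper closely: the paper's semisimplicial space $\bar{K}^{\partial}_{\bullet}(M)_{p,q}$ and the augmented resolution $X^{\partial}_{\bullet}(M)_{p,q}\to\mathcal{M}(M)$ do exactly what you describe, and Section~\ref{section: homological stability} carries out precisely the spectral sequence argument you sketch. The problem is Step~2, which you treat too casually. You propose to deform a new core into position ``by Whitney-type surgery,'' but in the regime of this theorem this is exactly what fails. The handle cores are disks $D^{p+1}$ and spheres $S^{q}$ in an $m$-manifold with $m=p+q+1$; when $q<p$, two embedded $(p+1)$-disks generically meet in a $(p-q+1)$-dimensional submanifold (and two $q$-spheres in $\partial_{1}M$ meet in dimension $q-p$ when $p<q$). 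This is above the Whitney range, so there is no Whitney disk to frame. The paper's actual mechanism is a new disjunction theorem (Theorem~\ref{theorem: boundary disjunction} in the appendix), proved with Hatcher--Quinn bordism invariants, and the essential subtlety is that the Hatcher--Quinn obstruction $\alpha_{t}(f,g,M)$ does \emph{not} vanish for an isotopy rel boundary. One must first create a compensating intersection near the boundary (Lemma~\ref{lemma: hopf map trick} and Proposition~\ref{proposition: intersection creation}), boundary-connect-sum it onto the core, and only then is the obstruction zero --- crucially this forces the isotopy to move the core's boundary sphere. Your proposal never confronts this, so as written it does not give a workable plan for the connectivity estimate.

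There is a second, more organizational divergence. You propose a direct induction on $g$ over the geometric complex. The paper instead constructs a simplicial map $F_{p,q}\colon K^{\partial}(M)_{p,q}\to L(\mathcal{W}^{\partial}_{p,q}(M,\partial_{1}M))$ to a purely algebraic complex built from the ``Wall form'' $\bigl(\pi_{p+1}(M,\partial_{1}M),\pi_{q}(\partial_{1}M),\tau,\lambda,\mu,\alpha\bigr)$, cites the connectivity of the algebraic complex from earlier work (Theorem~\ref{thm: high connectivity}), and transfers connectivity back along $F_{p,q}$ via the link lifting property (Lemma~\ref{lemma: link lift lemma}). This is where the $d(\pi_{q}(S^{p}))$ loss is actually incurred --- in the algebra, not in a geometric handle-slide count. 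Your intuition about the origin of the $d(\pi_{q}(S^{p}))$ shift is heuristically sound, but the paper realizes it through a different decomposition. To turn your outline into a proof you would need, at a minimum, to replace the Whitney-trick gloss with the Hatcher--Quinn disjunction machinery (allowing boundary-moving isotopies), and to supply either the algebraic comparison complex or an equivalent device for organizing the inductive link argument.
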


The above theorem implies that the homology of the direct system (\ref{equation: direct system}) stabilizes in the case $V = V_{p,q}^{1}$. 
For certain choices of $p$ and $q$ we can also identify the homology of the colimit. 
In joint work with B. Botvinnik \cite{BP 15} we construct a map, 
$$
\colim_{g\to\infty}\BDiff((D^{n+1}\times S^{n})^{\natural g}, D^{2n}) \longrightarrow Q_{0}BO(2n+1)\langle n \rangle_{+},
$$
which we prove induces an isomorphism in $H_{*}(\--, \Z)$ in the case that $n \geq 4$. 
Combining this homological equivalence with Theorem \ref{theorem: main theorem 1} we obtain the following corollary which lets us compute the homology of the classifying space $\BDiff((D^{n+1}\times S^{n})^{\natural g}, D^{2n})$ in low degrees relative to $g$.
\begin{corollary}
Let $2n+1 \geq 9$. 
Then there is an isomorphism, 
$$
H_{k}(\BDiff((D^{n+1}\times S^{n})^{\natural g}, D^{2n}); \; \Z) \; \cong \; H_{k}(Q_{0}BO(2n+1)\langle n \rangle_{+}; \; \Z),
$$
when $k \leq \tfrac{1}{2}(g - 4)$. 
\end{corollary}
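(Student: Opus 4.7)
The plan is to specialize Theorem \ref{theorem: main theorem 1} to the symmetric case $p = q = n$ with base manifold $M = D^{2n+1}$, and then to combine the resulting quantitative stability range with the stable homology calculation of \cite{BP 15}. Since $D^{2n+1}$ is a two-sided identity for boundary connected sum, one has $M \natural V^{g}_{n,n} = (D^{n+1}\times S^{n})^{\natural g}$, so the stabilization maps appearing in Theorem \ref{theorem: main theorem 1} are precisely the ones implicit in the statement of the corollary.

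To apply Theorem \ref{theorem: main theorem 1} with this data I verify the two inequalities (\ref{equation: required theorem inequalities}). The first, $|q-p| < \min\{p,q\}-2$, reduces to $0 < n-2$, which holds since $n \geq 4$. For the second, $\partial M = S^{2n}$ is $(2n-1)$-connected and the pair $(D^{2n+1}, S^{2n})$ is $2n$-connected, so $\min\{\kappa(\partial M), \kappa(M, \partial M)\} - 1 = 2n - 2 > 0 = |q-p|$. Since $\pi_{n}(S^{n}) \cong \Z$, the generating set length is $d(\pi_{n}(S^{n})) = 1$. Theorem \ref{theorem: main theorem 1} therefore yields that each stabilization map on $H_{k}$ is an isomorphism whenever $k \leq \tfrac{1}{2}(g - 1 - 3) = \tfrac{1}{2}(g - 4)$.

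Iterating this isomorphism and using that singular homology commutes with filtered colimits of spaces, the canonical map
$$H_{k}(\BDiff((D^{n+1}\times S^{n})^{\natural g}, D^{2n}); \Z) \; \longrightarrow \; \colim_{g'\to\infty} H_{k}(\BDiff((D^{n+1}\times S^{n})^{\natural g'}, D^{2n}); \Z)$$
is an isomorphism in the same range $k \leq \tfrac{1}{2}(g-4)$. The hypothesis $2n+1 \geq 9$ is equivalent to $n \geq 4$, which is precisely the range in which \cite{BP 15} identifies the colimit on the right with $H_{k}(Q_{0}BO(2n+1)\langle n \rangle_{+}; \Z)$. Composing the two isomorphisms gives the claim.

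The entire content of the corollary is contained in Theorem \ref{theorem: main theorem 1} and in \cite{BP 15}; no further substantive argument is required. What could be called the ``main obstacle'' is purely bookkeeping: verifying that $M = D^{2n+1}$ satisfies the connectivity hypotheses, evaluating $d(\pi_{n}(S^{n}))$ explicitly, and tracking how the general stability range $\tfrac{1}{2}(g - d(\pi_{q}(S^{p})) - 3)$ specializes to $\tfrac{1}{2}(g - 4)$.
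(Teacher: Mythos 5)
Your proposal is correct and follows the same route the paper intends: specialize Theorem~\ref{theorem: main theorem 1} to $M = D^{2n+1}$, $p = q = n$ (noting $d(\pi_n(S^n)) = 1$ and that the connectivity inequalities are trivially satisfied for $n \geq 4$), and splice the resulting stability range $k \leq \tfrac{1}{2}(g-4)$ onto the stable homology identification from \cite{BP 15}. The paper gives no further argument for the corollary, so your bookkeeping is exactly the content.
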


In addition to the classifying spaces of diffeomorphism groups of manifolds, we prove an analogous homological stability theorem for the moduli spaces of manifolds equipped with tangential structures, see Theorem \ref{theorem: main homological stability theorem theta structure}.
The precise statement of this theorem requires a number of preliminary definitions and so we hold off on stating this result until Section \ref{section: tangential structures} where the theorem is proven.

\subsection{Ideas behind the proof} 
The proof or our main theorem follows the strategy developed by Galatius and Randal-Williams in \cite{GRW 14} used to prove homological stability for the diffeomorphism groups, $\Diff((S^{n}\times S^{n})^{\#g}, D^{2n})$, $g \in \N$. 
The technique is also largely inspired by analogous the homological stability theorem of Harer \cite{Ha 85} for the mapping class groups of surfaces. 
Given a compact manifold triad $(M, \partial_{0}M, \partial_{1}M)$ and integers $p+q+1 = \dim(M)$, we construct a simplicial complex $K^{\partial}(M)_{p,q}$, that admits an action of the diffeomorphism group $\Diff(M, \partial_{0}M)$. 
Roughly, an $l$-simplex of $K^{\partial}_{\bullet}(M)_{p,q}$ is given by a set of embeddings,
$\phi_{0}, \dots, \phi_{l}: (V^{1}_{p,q}, \partial V^{1}_{p,q}) \longrightarrow (M, \partial_{1}M),$ 
for which $\phi_{i}(V^{1}_{p,q})\cap\phi_{j}(V^{1}_{p,q}) = \emptyset$ for all $i \neq j$.
Nearly all of the technical work of this paper is devoted to showing that the connectivity of the geometric realization $|K^{\partial}(M)_{p,q}|$ increases linearly (with a factor of $1/2$) in the number of boundary-connect-summands of $V_{p,q}^{1}$ contained in $M$ (see Theorem \ref{theorem: high-connectivity}). 
Once high-connectivity is established, the homological stability theorem follows by the same spectral sequence argument used in \cite{GRW 14}.
Most of the simplicial and homotopy theoretic techniques that we use come from \cite{GRW 14}, but the geometric-topological the aspects of the paper require new constructions and arguments. 
We describe some of these new features below.

The simplicial complex $K^{\partial}_{\bullet}(M)_{p,q}$ can be viewed as a ``relative version'' of the semi-simplicial space constructed in our previous work \cite{P 15}, used to prove homological stability for the diffeomorphism groups $\Diff((S^{q}\times S^{p})^{\# g})$, $g \in \N$.
Our proof that the space $|K^{\partial}(M)_{p,q}|$ is highly connected follows a similar strategy to what was employed in \cite{P 15}.
The idea is to map $K^{\partial}(M)_{p,q}$ to another simplicial complex, $L(\mathcal{W}^{\partial}_{p,q}(M, \partial_{1}M))$, that is constructed entirely out of algebraic data associated to the manifold $M$, which we call the \textit{Wall form} associated to $M$ (see Section \ref{section: Algebra}). 
Wall forms were initially defined in \cite{P 15} to study closed manifolds and their diffeomorphisms.
In this paper we have to use a relative version of the Wall form used to study manifold pairs $(M, \partial M)$ and diffeomorphisms $M \longrightarrow M$ that are non-trivial on the boundary.
High-connectivity of the complex $L(\mathcal{W}^{\partial}_{p,q}(M, \partial_{1}M))$ follows from our work in \cite{P 15} as well, and so it remains to prove that the map $\bar{K}^{\partial}_{\bullet}(M)_{p,q} \longrightarrow L(\mathcal{W}^{\partial}_{p,q}(M, \partial_{1}M))$ is highly connected. 

One of the main ingredients used in our proof that the map $\bar{K}^{\partial}_{\bullet}(M)_{p,q} \longrightarrow L(\mathcal{W}^{\partial}_{p,q}(M, \partial_{1}M))$ is highly-connected is a new disjunction result for embeddings of high-dimensional manifolds with boundary, this is Theorem \ref{theorem: boundary disjunction} stated in the appendix. 
If $(P, \partial P), (Q, \partial Q) \; \subset \; (M, \partial M)$ are submanifold pairs, Theorem \ref{theorem: boundary disjunction} gives conditions for when one can find an ambient isotopy $\Psi_{t}: (M, \partial M) \longrightarrow (M, \partial M)$ with $\Psi_{0} = \Id_{M}$, such that $\Psi_{1}(P)\cap Q = \emptyset$.
Theorem \ref{theorem: boundary disjunction} can be viewed as a generalization of the disjunction results of Wells \cite{We 67}, and Hatcher and Quinn \cite{HQ 74}, and thus could be of independent interest. 

\subsection{Plan of the paper}
In Section \ref{section: Preliminary Constructions} we recast the maps of the direct system (\ref{equation: direct system}) as maps arising from concatenation with a relative cobordism between two compact manifold pairs.
This will enable us to restate Theorem \ref{theorem: main theorem 1} in a slightly more general form that will be easier for us to prove. 
In Section \ref{section: simplicial complexes} we construct the main simplicial complex, $K^{\partial}(M)_{p,q}$.
In Section \ref{Section: The Algebraic Invariants} we define certain algebraic invariants associated to a manifold with boundary. 
Section \ref{section: Algebra} is devoted to a recollection of some results from our previous work in \cite{P 15a} regarding Wall forms.
In Section \ref{section: high connectivity of K} we use the results developed throughout the rest of the paper to prove that $|K^{\partial}(M)_{p,q}|$ is highly connected.
In Section \ref{section: homological stability}, we show how to obtain Theorem \ref{theorem: main homological stability theorem} (and thus Theorem \ref{theorem: main theorem 1}) using the high connectivity of $|K^{\partial}(M)_{p,q}|$.
In Section \ref{section: tangential structures} we show how to obtain an analogue of Theorem \ref{theorem: main homological stability theorem} for the moduli spaces of handlebodies equipped with tangential structures.  
In Appendix \ref{appendix: embeddings and disjunction} we prove a disjunction theorem for embeddings of high-dimensional manifolds with boundary.

\subsection{Acknowledgments}
The author thanks Boris Botvinnik for helpful conversations.
The author was supported by NSF Postdoctoral Fellowship, DMS-1502657.

\section{Relative Cobordism and Stabilization} \label{section: Preliminary Constructions}
To prove Theorem \ref{theorem: main theorem 1}, we will need to recast the direct system (\ref{equation: direct system}) as arising from concatenation with a relative cobordism between two compact manifold pairs.
Doing this will make our constructions consistent with the cobordism categories considered in \cite{G 12} and \cite{BP 15}.
Below we introduce some definitions and terminology that we will use throughout the paper. 
Recall that a \textit{triad} of topological spaces is a triple $(X; A, B)$ where $X$ is a topological space and $A, B \subset X$ are subspaces. 
\begin{definition} \label{defn: manifold triad}
A \textit{manifold triad} of dimension $n$ is a triad $(W; \partial_{0}W, \partial_{1}W)$ where $W$ is an $n$-dimensional smooth manifold, $\partial_{0}W, \partial_{1}W \subset \partial W$ are submanifolds of dimension $n-1$ such that $\partial W = \partial_{0}W\cup\partial_{1}W$ and $\partial(\partial_{0}W) = \partial_{0}W\cap\partial_{1}W = \partial(\partial_{1}W)$.
We will denote by $\partial_{0,1}W$ the intersection $\partial_{0}W\cap\partial_{1}W$. 
We will refer to $\partial_{0}W$ and $\partial_{1}W$ as the \textit{faces}. 

Two compact $d$-dimensional manifold pairs 
$(M, \partial M)$ and $(N, \partial N)$ are said to be \textit{cobordant} if there exists a 
$(d+1)$-dimensional compact manifold triad
$(W; \partial_{0}W, \partial_{1}W)$ such that $(\partial_{0}W, \partial_{0,1}W) \; =  \;  (M\sqcup N, \; \partial M\sqcup \partial N)$. 
The pair $(W, \partial_{1}W)$ is then said to be a \textit{relative cobordism} between the pairs $(M, \partial M)$ and $(N, \partial N)$. 
\end{definition}

Let $(M; \partial_{0}M, \partial_{1}M)$ be an $m$-dimensional, compact manifold triad.
Let $\Diff(M)$ denote the topological group of self-diffeomorphisms $f: M \longrightarrow M$ with $f(\partial_{i}M) = \partial_{i}M$ for $i = 0, 1$. 
We are mainly interested in the subgroup $\Diff(M, \partial_{0}M) \subset \Diff(M)$ consisting of those self-diffeomorphisms that restrict to the identity on a neighborhood of $\partial_{0}M$. 
We will need the following construction.
\begin{Construction} \label{construction: the stabilization map}
Let $(P, \partial P)$ be an $(m-1)$-dimensional manifold pair and
let $(K; \partial_{0}K, \partial_{1}K)$ be a compact manifold triad such that $\partial_{0}K = \partial_{0}M\sqcup P$, i.e.\ the pair $(K, \partial_{1}K)$ is a relative cobordism between $(\partial_{0}M, \partial_{0, 1}M)$ and $(P, \partial P)$.
Let $M\cup_{\partial_{0}}K$ be the manifold obtained by attaching $K$ to $M$ along the face $\partial_{0}M \subset \partial_{0}K$.
Similarly, let $\partial_{1}(M\cup_{\partial_{0}}K) := \partial_{1}M\cup_{\partial_{0,1}}\partial_{1}K$ be the manifold obtained by attaching
$\partial_{1}K$ to $\partial_{1}M$ along their common boundary $\partial_{0,1}M$.
 By setting 
$\partial_{0}(M\cup_{\partial_{0}}K) := P,$ we obtain a new manifold triad, 
$$\left(M\cup_{\partial_{0}}K; \; \partial_{0}(M\cup_{\partial_{0}}K), \;  \partial_{1}(M\cup_{\partial_{0}}K)\right).$$
The cobordism $(K, \partial_{1}K)$ induces a homomorphism, 
$\Diff(M, \partial_{0}M) \longrightarrow \Diff(M\cup_{\partial_{0}}K, P)$, $f \mapsto f\cup\textstyle{\Id_{K}},$
defined by extending diffeomorphisms identically over $K$. 
This homomorphism in turn induces a map on the level of classifying spaces, 
\begin{equation} \label{equation: stabilization map cobordism}
\BDiff(M, \partial_{0}M) \longrightarrow \BDiff(M\cup_{\partial_{0}M}K, P).
\end{equation}
This map should be compared to the one from \cite[Equation 1.1]{GRW 14}.
Indeed, they are the same map in the case that $\partial_{1}M = \partial_{1}K = \emptyset$.
\end{Construction}

Suppose now that $\partial_{0}M$ and $\partial_{1}M$ are non-empty. 
Let $p$ and $q$ be integers such that $p + q + 1 = m$. 
Let $K_{p, q}$ then denote the manifold obtained by forming the boundary connected sum of $D^{p+1}\times S^{q}$, with $\partial_{0}M\times[0, 1]$.
In constructing $K_{p,q}$ we form the boundary connected sum along a neighborhood contained in the complement of the subspace 
$\partial_{0}M\times\{0,1\} \; \subset \; \partial_{0}M\times[0, 1];$
in this way the boundary of $K_{p,q}$ will contain $\partial_{0}M\times\{0,1\}$.
We then set: 
$$
\partial_{0}K_{p,q} := \partial_{0}M\times\{0,1\}, \quad  \quad \partial_{1}K_{p,q} := \partial K_{p,q}\setminus \Int(\partial_{0}K_{p,q}) \; = \; (\partial_{0, 1}M\times[0,1])\#(S^{p}\times S^{q}).
$$
The triple $(K_{p,q}; \; \partial_{0}K_{p,q}, \; \partial_{1}K_{p,q})$ is a manifold triad and $(K_{p,q}, \partial_{1}K_{p,q})$ is a relative cobordism between the manifold pairs $(\partial_{0}M\times\{0\},\; \partial_{0,1}M\times\{0\})$ and $(\partial_{0}M\times\{1\}, \; \partial_{0,1}M\times\{1\})$.
We apply Construction \ref{construction: the stabilization map} to this relative cobordism $(K_{p,q}, \partial_{1}K_{p,q})$. 
We form the manifold $M\cup_{\partial_{0}}K_{p,q}$ and notice that $\partial_{0}(M\cup_{\partial_{0}}K_{p,q}) = \partial_{0}M$.
We define 
\begin{equation} \label{equation: stabilization map}
\xymatrix{
s_{p,q}: \BDiff(M, \; \partial_{0}M) \; \longrightarrow \; \BDiff(M\cup_{\partial_{0}}K_{p,q}, \; \partial_{0}M)
}
\end{equation}
to be the map on classifying spaces induced by $(K_{p,q}, \partial_{1}K_{p,q})$ from (\ref{equation: stabilization map cobordism}).
We will refer to this map as the \textit{$(p, q)$-th stabilization map}.

\begin{remark}
Note that the manifold $M\cup_{\partial_{0}}K_{p,q}$ is diffeomorphic to the boundary connected sum $M\natural V^{1}_{p,q} = M\natural(D^{p+1}\times S^{q})$. 
By identifying $M\cup_{\partial_{0}}K_{p,q}$ with $M\natural V^{1}_{p,q}$, (\ref{equation: stabilization map}) yields the map,
$$
\BDiff(M, \; \partial_{0}M) \longrightarrow \BDiff(M\natural V^{1}_{p,q}, \; \partial_{0}M).
$$
In the case that $\partial_{0}M = D^{p+q}$ we obtain the maps used in the direct system (\ref{equation: direct system}); we take this to be the definition of those maps.
\end{remark}

We will now restate Theorem \ref{theorem: main theorem 1} in terms of the maps defined above. 
We need one more preliminary definition. 
Recall from the previous section the manifold $V^{g}_{p,q} = (D^{p+1}\times S^{q})^{\natural g}$.
Choose an embedded $(p+q)$-dimensional closed disk $D \subset \partial V^{g}_{p,q}$ and set 
$\partial_{0}V^{g}_{p,q} := D$ and $\partial_{1}V^{g}_{p,q} := \partial V^{g}_{p,q}\setminus\Int(D)$
so as to obtain a manifold triad.
With $\dim(M) = m$ and $p+q + 1 = m$ as above, we let $r_{p,q}(M)$ be the integer defined by,
\begin{equation} \label{equation: p-q rank of mfd}
r_{p,q}(M) \; = \; \max\{ g \in \N \; | \; \text{there exists an embedding $(V^{g}_{p,q}, \partial_{1}V^{g}_{p,q}) \longrightarrow (M, \partial_{1}M)$}  \}.
\end{equation}
We refer to this quantity as the $(p, q)$-rank of $M$. 
This quantity $r_{p,q}(M)$ is equivalent to the maximal number of boundary connect summands of $D^{p+1}\times S^{q}$ that split off of $M$. 
We emphasize that the embeddings $(V^{g}_{p,q}, \partial_{1}V^{g}_{p,q}) \longrightarrow (M, \partial_{1}M)$ used in
(\ref{equation: p-q rank of mfd}) need not send the face $\partial_{0}V^{g}_{p,q}$ into $\partial_{0}M$.
\begin{remark}
The value $r_{p,q}(M)$ depends on the structure of the triad $(M; \partial_{0}M, \partial_{1}M)$ and not just the manifold $M$ itself. 
In particular, switching the roles of $\partial_{0}M$ and $\partial_{1}M$ in (\ref{equation: p-q rank of mfd}) will change the value of the rank $r_{p,q}(M)$. 
\end{remark}
As in the statement of Theorem \ref{theorem: main theorem 1} we will need to assume that the following inequalities are satisfied:
\begin{equation} \label{equation: fundamental inequalities}
|q - p| < \min\{p, q\} - 2, \quad |q - p|  < \min\{\kappa(\partial_{1}M), \kappa(M, \partial_{1}M)\} - 1,
\end{equation}
where recall that $\kappa(\partial_{1}M)$ and $\kappa(M, \partial_{1}M)$ denote the degrees of connectivity of $\partial_{1}M$ and $(M, \partial_{1}M)$ respectively.
The main theorem that we will prove in this paper is stated below; 
it is a generalization of Theorem \ref{theorem: main theorem 1}. 
Recall the generating set length $d(\pi_{q}(S^{p}))$. 
\begin{theorem} \label{theorem: main homological stability theorem}
Let $(M; \partial_{0}M, \partial_{1}M)$ be an $m$-dimensional, compact, manifold triad with $\partial_{0}M$ and $\partial_{1}M$ nonempty. 
Let $p$ and $q$ be positive integers with $p + q +1 = m$ and suppose that the inequalities of (\ref{equation: fundamental inequalities}) are satisfied. 
Let $r_{p,q}(M) \geq g$. 
Then the homomorphism
$$\xymatrix{
(s_{p, q})_{*}: H_{k}(\BDiff(M, \partial_{0}M); \; \Z) \longrightarrow H_{k}(\BDiff(M\cup_{\partial_{0}}K_{p,q}, \; \partial_{0}M); \; \Z)
}$$
is an isomorphism when $k \leq \tfrac{1}{2}(g - 3 - d(\pi_{q}(S^{p})))$ and an epimorphism when $k \leq \tfrac{1}{2}(g - 1 - d(\pi_{q}(S^{p})))$.
\end{theorem}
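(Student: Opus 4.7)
The plan is to follow the template established by Galatius and Randal-Williams in \cite{GRW 14}. The diffeomorphism group $\Diff(M, \partial_0 M)$ acts on the simplicial complex $K^{\partial}(M)_{p,q}$ whose $\ell$-simplices are $(\ell{+}1)$-tuples of embeddings $(V^1_{p,q}, \partial V^1_{p,q}) \hookrightarrow (M, \partial_1 M)$ with pairwise disjoint images. The stabilizer of a vertex $\phi$ is (up to homotopy) the classifying space of diffeomorphisms of $M \setminus \phi(\Int V^1_{p,q})$ fixing $\partial_0 M$, and when the embedding $\phi$ is chosen so that its image realizes the handle $K_{p,q}$ that is glued on in the stabilization, this stabilizer is canonically identified with $\BDiff(M, \partial_0 M)$ in such a way that the forgetful map to $\BDiff(M \cup_{\partial_0} K_{p,q}, \partial_0 M)$ coincides with $s_{p,q}$.

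First I would form the Borel construction $E\Diff(M, \partial_0 M) \times_{\Diff(M, \partial_0 M)} |K^{\partial}(M)_{p,q}|$ and examine the associated skeletal spectral sequence. The entries of its $E^1$-page are group homologies of stabilizers of simplices, each being the $\BDiff$ of a manifold of $(p,q)$-rank at least $g - \ell - 1$. If $|K^{\partial}(M)_{p,q}|$ is $\tfrac{1}{2}(g - d(\pi_q(S^p)) - 3)$-connected, then the abutment agrees with $H_*(\BDiff(M, \partial_0 M); \Z)$ through the corresponding range of degrees. Comparing this spectral sequence with the analogous one for $M \cup_{\partial_0} K_{p,q}$ and running a joint induction on $g$ and the homological degree, the vanishing of the relevant differentials in the stable range (by inductive hypothesis applied to lower-rank manifolds) forces $s_{p,q}$ to be an isomorphism and epimorphism in the stated ranges, precisely as in \cite[\S 5]{GRW 14}.

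The main obstacle is therefore the high-connectivity estimate for $|K^{\partial}(M)_{p,q}|$ required as input, namely Theorem \ref{theorem: high-connectivity}. Most of the paper is devoted to its proof: constructing the algebraic complex $L(\mathcal{W}^{\partial}_{p,q}(M, \partial_1 M))$ associated to a Wall form, using the results of \cite{P 15a} to show this complex is highly connected, and then proving that the forgetful map from $K^{\partial}(M)_{p,q}$ to it is highly connected via the new boundary disjunction theorem (Theorem \ref{theorem: boundary disjunction}) of the appendix. Within the proof of the present theorem itself I would simply invoke Theorem \ref{theorem: high-connectivity} and run the spectral sequence comparison above; the codimension and connectivity hypotheses in (\ref{equation: fundamental inequalities}) enter only through the input connectivity bound, while the offset by $d(\pi_q(S^p))$ reflects the indeterminacy from framings of the embedded handles, accounted for in exactly the same way as in \cite{P 15}.
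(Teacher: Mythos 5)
Your proposal follows essentially the same route as the paper: both are instances of the Galatius--Randal-Williams resolution-by-a-highly-connected-semi-simplicial-space argument, with Theorem \ref{theorem: high-connectivity} supplying the key connectivity input, an identification of the $E^1$-page with homology of lower-rank stabilizers, identification of $d^1$ with $(s_{p,q})_*$, and the standard GRW induction closing the argument. The paper realizes ``$E\Diff\times_{\Diff}|\--|$'' concretely through the moduli-space model $\mathcal{M}(M)\simeq\BDiff(M,\partial_0 M)$ of Definition \ref{defn: moduli M} and the explicit maps $F^g_k$, which avoids having to argue directly with ``stabilizers of simplices''; but the content is the same.

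Two small points deserve attention. First, for the resolution you must use the semi-simplicial \emph{space} $\bar K^{\partial}_\bullet(M)_{p,q}$ of Definition \ref{defn: the embedding complex (semi simp)} (ordered tuples of embeddings with \emph{fully disjoint} images $\phi_i(\widehat V_{p,q})\cap\phi_j(\widehat V_{p,q})=\emptyset$), not the simplicial complex $K^{\partial}(M)_{p,q}$ whose simplices only require disjoint \emph{cores}; the connectivity of $|\bar K^{\partial}_\bullet(M)_{p,q}|$ is then deduced from that of $|K^{\partial}(M)_{p,q}|$ via the chain of comparisons in Corollary \ref{corollary: high connectivity of main flag complex} (discretization, topologization, inclusion of $\bar K^{\partial}_\bullet$ into $K^{\partial}_\bullet$). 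Using the simplicial complex directly in the Borel construction would not give a resolution whose $l$-simplex spaces have the correct homotopy type of $\BDiff$ of a complement. Second, the connectivity bound you quote for $|K^{\partial}(M)_{p,q}|$ should be $\tfrac12(g-4-d)$ (with $lCM\geq\tfrac12(g-1-d)$), not $\tfrac12(g-3-d)$; it is the augmentation of the resolution that is $\tfrac12(g-2-d)$-connected (Proposition \ref{prop: highly connected resolution}), and it is this bound that feeds into the $E^\infty$ vanishing range and ultimately produces the stated isomorphism range $k\leq\tfrac12(g-3-d)$.
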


\section{The Complex of Embedded Handles} \label{section: simplicial complexes} 
In this section we construct a semi-simplicial space that is analogous to \cite[Definition 5.1]{GRW 14}.
The definition requires a preliminary geometric construction. 
For integers $p, q$, and $g$, let $(V_{p,q}^{g}; \; \partial_{0}V_{p,q}^{g}, \partial_{1}V^{g}_{p,q})$ be the $(p+q+1)$-dimensional manifold triad defined in the previous section. 
For each $g \in \N$ we let $W_{p, q}^{g}$ denote the the face, 
$\partial_{1}V^{g}_{p, q} \cong (S^{p}\times S^{q})^{\# g}\setminus\Int(D^{p+q}).$
We will denote $V_{p, q} := V^{1}_{p,q}$ and $W_{p, q} := W^{1}_{p, q}$. 
\begin{definition}
For an integer $m$ let $D^{m}_{+}$ denote the $m$-dimensional half-disk, i.e.\ the subspace given by the set
$\{ \bar{x} \in \R^{m} \; | \; |\bar{x}| \leq 1, \quad x_{1} \geq 0 \; \}.$
The boundary of $D^{m}_{+}$ has the decomposition $\partial D^{m}_{+} = \partial_{0}D^{m}_{+}\cup\partial_{1}D^{m}_{+}$
where $\partial_{0}D^{m}_{+}$ and $\partial_{1}D^{m}_{+}$ are given by,
$\partial_{0}D^{m}_{+} = \{ \bar{x} \in D^{m}_{+} \; | \; x_{1} = 0 \; \}$
and
$\partial_{1}D^{m}_{+} = \{ \bar{x} \in D^{m}_{+} \; | \; |\bar{x}| = 1 \; \}$.
In this way $(D^{m}_{+}; \; \partial_{0}D^{m}_{+}, \; \partial_{1}D^{m}_{+})$ forms a manifold triad. 
\end{definition}
We will construct a slight modification of the manifold $V_{p,q}$. 
Choose an embedding, 
$$
\alpha: (D^{p+q}_{+}\times\{1\}, \; \partial_{0}D^{p+q}_{+}\times\{1\}) \; \longrightarrow \; (\partial_{0}V_{p, q}, \; \partial_{0, 1}V_{p,q}).
$$
Let $\widehat{V}_{p, q}$ denote the manifold obtained by attaching $D^{p+q}_{+}\times[0, 1]$ to $V_{p, q}$ along the embedding $\alpha$, i.e.\ 
$$
\widehat{V}_{p, q} \; = \; V_{p, q}\cup_{\alpha}(D^{p+q}_{+}\times[0, 1]).
$$
We then denote by $\widehat{W}_{p, q}$ the manifold obtained by attaching $\partial_{0}D^{p+q}\times[0,1]$ to $\partial_{1}V_{p, q}$ along the restriction of $\alpha$ to $\partial_{0}D^{p+q}\times\{1\}$. 

\begin{Construction}  \label{construction: main probing manifold}
We construct a subspace of $\widehat{V}_{p, q}$ as follows. 
Choose a basepoint 
$(a_{0}, b_{0}) \in \partial D^{p+1}\times S^{q}$
such that the pair $(a_{0}, -b_{0})$ is contained in the face $\partial_{1}V_{p, q}$. 
Choose an embedding
$
\gamma: [0, 1] \; \longrightarrow \; \widehat{W}_{p, q}
$
that satisfies the following conditions:
\begin{enumerate} \itemsep.2cm
\item[(i)] $\gamma(1) = (a_{0}, -b_{0})$ and $\gamma(0) = (0,0) \in \partial_{0}D^{p+q}_{+}\times\{0\}$;
\item[(ii)] 
there exists $\varepsilon \in (0, 1)$ such that, $\gamma(t) \; = \; (0, t) \in \partial_{0}D^{p+q}_{+}\times[0,1],$
whenever $t \in (0, \varepsilon)$;
\item[(iii)] 
$\gamma((0, 1))\cap\left[(D^{p+1}\times \{a_{0}\})\cup (\{b_{0}\}\times S^{q})\right] = \emptyset$.
\end{enumerate}
We define $(B_{p,q}, C_{p, q}) \subset (\widehat{V}_{p,q}, \widehat{W}_{p, q})$ to be the pair of subspaces given by,
$$
\left(\gamma([0,1])\cup (D^{p+1}\times \{b_{0}\})\cup (\{a_{0}\}\times S^{q}), \; \; \gamma([0,1])\cup (S^{p}\times \{b_{0}\})\cup (\{a_{0}\}\times S^{q})\right).
$$
The inclusion
$(B_{p,q}, C_{p, q}) \hookrightarrow (\widehat{V}_{p,q}, \widehat{W}_{p, q})$
is clearly a homotopy equivalence of pairs. 
We will refer to the pair $(B_{p,q}, C_{p, q})$ as the \textit{core} of $(\widehat{V}_{p,q}, \widehat{W}_{p, q})$. 
\end{Construction}

We now use the above construction to construct a simplicial complex. 
Let $(M; \partial_{0}M, \partial_{1}M)$ be a compact manifold triad of dimension $m$. 
Let $\R^{m-1}_{+}$ denote $[0,\infty)\times\R^{m-2}$ and let $\partial\R^{m-1}_{+}$ denote $\{0\}\times\R^{m-2}$. 
Choose an embedding,
$a: [0,1)\times\R^{m-1}_{+} \longrightarrow M$
with,
$a^{-1}(\partial_{0}M) =  \{0\}\times\R^{m-1}_{+}$ and $a^{-1}(\partial_{1}M) =  [0, 1)\times\partial\R^{m-1}_{+}.$
For each pair of positive integers $p$ and $q$ with $p+q+1 = m$,
we define a simplicial complex
  $K^{\partial}(M, a)_{p,q}$. 
\begin{definition} \label{defn: the embedding complex} 
The simplicial complex $K^{\partial}(M, a)_{p,q}$ is defined as follows:
\begin{enumerate}  \itemsep.2cm
\item[(i)] 
A vertex in $K^{\partial}(M, a)_{p,q}$ is defined to be a pair $(t, \phi)$, where $t \in \R$ and 
$\phi: \widehat{V}_{p,q} \longrightarrow M$
is an embedding that satisfies:
\begin{enumerate} \itemsep.1cm
\item[(a)] $\phi^{-1}(\partial_{0}M) = D^{p+q}_{+}\times\{0\}$ \;  and \;  $\phi^{-1}(\partial_{1}M) = \widehat{W}_{p, q}$; 
\item[(b)]
 there exists $\varepsilon > 0$ such that for
$(s, z) \in [0, \varepsilon)\times D^{m-1}_{+} \; \subset \; \widehat{V}_{p,q}$, \; 
the equality $\phi(s, z) = a(s, z + te_{2})$ is satisfied,
where $e_{2} \in \R^{m-1}_{+}$ denotes the second basis vector. 
\end{enumerate}
\item[(ii)] A set of vertices $\{(\phi_{0}, t_{0}), \dots, (\phi_{l}, t_{l})\}$ forms an $l$-simplex 
if 
$t_{i} \neq t_{j}$ and 
$\phi_{i}(B_{p,q}) \cap \phi_{j}(B_{p,q}) = \emptyset$ whenever $i \neq j$, where recall that $B_{p, q} \subset \widehat{V}_{p, q}$ is the core from Construction \ref{construction: main probing manifold}.  
\end{enumerate}
\end{definition}

\begin{remark} \label{remark: drop a from notation}
The embedding $a$ was necessary in order to define the complex $K^{\partial}(M, a)_{p,q}$, however, it will not play a serious role in any of our proofs latter in the paper. 
For this reason we will drop the embedding $a$ from the notation and will denote 
$K^{\partial}(M)_{p,q} := K^{\partial}(M, a)_{p,q}.$
\end{remark} 
The majority of the technical work of this paper is devoted to proving Theorem \ref{theorem: high-connectivity} stated below. 
The statement of this theorem will require the use of a definition from \cite{GRW 14} which we recall below.
\begin{definition} \label{defn: cohen macaulay}
A simplicial complex $X$ is said to be
\textit{weakly Cohen-Macaulay} of dimension $n$ if it is
$(n-1)$-connected and the link of any $p$-simplex is
$(n-p-2)$-connected. In this case we write $wCM(X) \geq
n$. The complex $X$ is said to be \textit{locally weakly
Cohen-Macaulay} of dimension $n$ if the link of any simplex is
$(n - p - 2)$-connected (but no global connectivity is required on
$X$ itself). In this case we shall write $lCM(X) \geq n$.
\end{definition}

\begin{theorem} \label{theorem: high-connectivity}
Let $(M; \partial_{0}M, \partial_{1}M)$ be an $m$-dimensional manifold triad with $\partial_{0}M \neq \emptyset$. 
Let $p, q \in \N$ be such that $p + q + 1 = m$ and suppose that 
the inequalities, 
\begin{equation} \label{equation: main inequalities restated}
|q - p| < \min\{p, q\} - 2, \quad |q - p|  < \min\{\kappa(\partial_{1}M), \kappa(M, \partial_{1}M)\} - 1,
\end{equation}
are satisfied. 
Let $d = d(\pi_{q}(S^{p}))$ be the generating set rank and suppose that $r_{p,q}(M) \geq g$.
Then the geometric realization $|K^{\partial}(M)_{p,q}|$ is $\tfrac{1}{2}(g-4-d)$-connected and 
$lCM(K^{\partial}(M)_{p,q}) \geq \tfrac{1}{2}(g - 1- d)$.
\end{theorem}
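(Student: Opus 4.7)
The plan is to prove Theorem \ref{theorem: high-connectivity} by reducing it to a purely algebraic statement via a comparison with a simplicial complex built from the relative Wall form of Section \ref{Section: The Algebraic Invariants}, following the general strategy pioneered by Galatius and Randal-Williams in \cite{GRW 14} and adapted to high-dimensional closed manifolds in \cite{P 15}. A vertex $(t, \phi)$ of $K^{\partial}(M)_{p,q}$ yields, via the image of the core class under $H_{q}(\widehat{V}_{p,q}, \widehat{W}_{p,q}) \longrightarrow H_{q}(M, \partial_{1}M)$, a distinguished element of the relative Wall form $\mathcal{W}^{\partial}_{p,q}(M, \partial_{1}M)$, and the disjointness condition on simplices corresponds to algebraic orthogonality with respect to the relative intersection pairing. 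This assignment defines a natural simplicial map
\[
\Phi \, : \, K^{\partial}(M)_{p,q} \, \longrightarrow \, L(\mathcal{W}^{\partial}_{p,q}(M, \partial_{1}M)).
\]

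High connectivity of the target complex, together with a local Cohen--Macaulay bound for it, is established by the algebraic work of \cite{P 15a} recalled in Section \ref{section: Algebra}; here the hypothesis $r_{p,q}(M) \geq g$ provides the hyperbolic-rank input that yields the stated connectivity bound $\tfrac{1}{2}(g - 4 - d)$, while the correction by $d = d(\pi_{q}(S^{p}))$ is inherited directly from the algebraic statement and reflects the ambiguity in choosing framings for embedded handles. Given this input, the theorem reduces to showing that $\Phi$ is itself $\tfrac{1}{2}(g - 4 - d)$-connected, which by the standard coloured-simplex argument of \cite{GRW 14} further reduces to showing that the homotopy fibres of $\Phi$ over each simplex of $L(\mathcal{W}^{\partial}_{p,q}(M, \partial_{1}M))$ are suitably highly connected (and non-empty over every simplex).

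The analysis of these homotopy fibres is the geometric heart of the proof and decomposes into two stages. First, any algebraic class in the Wall form must be realised by an honest embedding $\widehat{V}_{p,q} \hookrightarrow (M, \partial_{1}M)$ of the required boundary type; this will be carried out via Haefliger--Zeeman embedding theory in the relative setting, and the inequality $|q - p| < \min\{\kappa(\partial_{1}M), \kappa(M, \partial_{1}M)\} - 1$ is precisely what supplies the metastable connectivity needed for both the interior embedding and its restriction to $\partial_{1}M$. Second, given a collection of realising embeddings whose cores are pairwise algebraically disjoint, they must be ambient-isotoped to have pairwise \emph{geometrically} disjoint cores. This second step is exactly the content of the new boundary disjunction result (Theorem \ref{theorem: boundary disjunction}) proved in the appendix, and the codimension inequality $|q - p| < \min\{p, q\} - 2$ is what feeds its hypotheses.

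The main obstacle is precisely this disjunction step on a manifold with boundary. Unlike the closed case treated in \cite{P 15}, the cores intersect $\partial_{1}M$ in pieces of positive dimension that must themselves be made disjoint \emph{within} $\partial_{1}M$ rather than merely swept away from it, so the classical Wells and Hatcher--Quinn disjunction techniques must be upgraded into a genuinely relative statement, which is the main novel ingredient of the paper. Once this is in hand, a routine induction on $g$ finishes the argument: the link of a $j$-simplex in $K^{\partial}(M)_{p,q}$ is naturally identified with $K^{\partial}(M')_{p,q}$ for a manifold triad $(M'; \partial_{0}M', \partial_{1}M')$ obtained by excising open neighbourhoods of the $j+1$ handle cores, for which $r_{p,q}(M') \geq g - (j+1)$ and the connectivity hypotheses on $(M', \partial_{1}M')$ persist, so that the inductive hypothesis gives both the global connectivity bound and the local Cohen--Macaulay bound simultaneously.
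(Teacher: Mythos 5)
Your overall strategy matches the paper's: construct a simplicial map from $K^{\partial}(M)_{p,q}$ to the algebraic complex $L(\mathcal{W}^{\partial}_{p,q}(M, \partial_{1}M))$, import the algebraic high-connectivity and lCM bounds from \cite[Theorem 5.1]{P 15a}, and then transfer these bounds back along the map using geometric embedding and disjunction input. The two geometric ingredients you isolate --- realizing Wall-form classes by honest embeddings via the connectivity inequalities, and making realizing embeddings geometrically disjoint via the new boundary disjunction theorem --- are exactly the ones the paper uses.

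Where you diverge from the paper, and where I see genuine gaps, is in the precise mechanism of the transfer. You phrase the reduction as ``showing that the homotopy fibres of $\Phi$ over each simplex are suitably highly connected,'' a Quillen--Theorem-A style statement, and then propose to finish with ``a routine induction on $g$'' by identifying links of simplices in $K^{\partial}(M)_{p,q}$ with complexes $K^{\partial}(M')_{p,q}$ for $M'$ obtained by excision. Neither of these matches what the paper actually does, and both hide real technical difficulties. The paper instead invokes the \emph{link lifting property} formalism of \cite[Lemma 2.3]{P 15b} (Lemma \ref{lemma: link lift lemma} here), which converts $lCM(Y)\geq n$ plus link lifting into injectivity of $\pi_j(|X|)\to\pi_j(|Y|)$ in a range, and which yields the $lCM$ bound for the source simultaneously --- no induction on $g$ is needed, and the connectivity and local Cohen--Macaulay bounds come out together. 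Crucially, the link lifting property cannot be verified unconditionally: the paper introduces the transversality relation $\mathcal{T}$ (Definition \ref{defn: transversality relation}) precisely because one must assume the given cores are pairwise transverse before the inductive Whitney-trick arguments (Corollaries \ref{corollary: inductive whitney trick}, \ref{corollary: inductive relative whitney trick}) can be brought to bear; your proposal never imposes such a condition, and without it the disjunction step would stall. Your excision-based induction also has a quantitative problem: the embeddings witnessing $r_{p,q}(M)\geq g$ need not be compatible with the $j+1$ cores of a given simplex, so the inequality $r_{p,q}(M')\geq g-(j+1)$ for the excised triad is not automatic. Finally, you skip the identification of the ambient regular neighbourhood of the realizing wedge with $D^{p+1}\times S^q$; the paper has to prove this explicitly (Proposition \ref{proposition: recognition of handlebody}, via the $h$-cobordism theorem) before the lifted vertex is even well-defined.
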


The proof of the above theorem takes place over the course of the next three sections of the paper. 
In addition to the simplicial complex defined above, 
we will need to work with two related semi-simplicial spaces.
Let $(M; \partial_{0}M, \partial_{1}M)$, $p$, and $q$ be as above and let 
$a: [0,1)\times\R^{m-1}_{+} \longrightarrow M$
be the same embedding used in Definition \ref{defn: the embedding complex}.
 We define two semi-simplicial spaces $K^{\partial}_{\bullet}(M)_{p,q}$ and $\bar{K}^{\partial}_{\bullet}(M, a)_{p,q}$ below. 
\begin{definition} \label{defn: the embedding complex (semi simp)}  
The semi-simplicial space $K^{\partial}_{\bullet}(M)_{p,q}$ is defined as follows:
\begin{enumerate}  \itemsep.2cm
\item[(i)] 
The space of $0$-simplices $K^{\partial}_{0}(M, a)_{p,q}$ is defined to have the same underlying set as the set of vertices of the simplicial complex $K^{\partial}(M, a)_{p,q}$. 
That is, $K^{\partial}_{0}(M, a)_{p,q}$ is the space of pairs 
 $(t, \phi)$, where $t \in \R$ and 
$\phi: \widehat{V}_{p,q} \longrightarrow M$ is an embedding which satisfies the same condition as in part (i) of Definition \ref{defn: the embedding complex}.
\item[(ii)] The space of $l$-simplices, $K^{\partial}_{l}(M, a) \subset (K^{\partial}_{0}(M, a))^{l+1}$ consists ordered $(l+1)$-tuples, 
$((t_{0}, \phi_{0}), \dots, (t_{l}, \phi_{l})),$
such that $t_{0} < \cdots < t_{l}$ and $\phi_{i}(B_{p,q})\cap\phi_{j}(B_{p,q}) = \emptyset$ when $i \neq j$.
\end{enumerate}
The spaces $K^{\partial}_{l}(M, a)\subset (\R\times
  \Emb(\widehat{V}_{p,q}, M))^{l+1}$ are topologized using the $C^{\infty}$-topology on
the space of embeddings. 
The assignments $[l] \mapsto K^{\partial}_{l}(M, a)$ define a semi-simplicial space denoted by 
$K^{\partial}_{\bullet}(M, a)_{p,q}$.
The $i$th face map $d_{i}: K^{\partial}_{l}(M, a) \longrightarrow K^{\partial}_{l-1}(M, a)$ is given by forgetting the $i$th entry in the $l$-tuple $((t_{0}, \phi_{0}), \dots, (t_{l}, \phi_{l}))$. 

Finally, 
$\bar{K}^{\partial}_{\bullet}(M, a)_{p,q} \subset K^{\partial}_{\bullet}(M, a)_{p,q}$ is defined to be the sub-semi-simplicial space consisting of all simplices $((\phi_{0}, t_{0}), \dots, (\phi_{l}, t_{l})) \in K^{\partial}_{l}(M, a)$ such that $\phi_{i}(\widehat{V}_{p,q}) \cap \phi_{j}(\widehat{V}_{p,q}) = \emptyset$ whenever $i \neq j$.
\end{definition} 
As in Remark \ref{remark: drop a from notation}, when working with $K^{\partial}_{\bullet}(M, a)_{p,q}$ and $\bar{K}^{\partial}_{\bullet}(M, a)_{p,q}$ we will drop the embedding $a$ from the notation and write,
$\bar{K}^{\partial}_{\bullet}(M)_{p,q} := \bar{K}^{\partial}_{\bullet}(M, a)_{p,q},$ and $K^{\partial}_{\bullet}(M)_{p,q} := K^{\partial}_{\bullet}(M, a)_{p,q}.$
We will ultimately need to use the fact that the geometric realizations $|\bar{K}^{\partial}_{\bullet}(M, a)_{p,q}|$ and $|K^{\partial}_{\bullet}(M, a)_{p,q}|$ are highly connected. 
We prove that $|\bar{K}^{\partial}_{\bullet}(M, a)_{p,q}|$ and $|K^{\partial}_{\bullet}(M, a)_{p,q}|$ are highly connected by comparing them to the simplicial complex $K^{\partial}(M)_{p,q}$. 
\begin{corollary} \label{corollary: high connectivity of main flag complex}
Let $(M; \partial_{0}M, \partial_{1}M)$ be an $m$-dimensional manifold triad with $\partial_{0}M \neq \emptyset$. 
Let $p, q \in \N$ be such that $p+q+1 = m$ and suppose that 
the inequalities (\ref{equation: main inequalities restated}) are satisfied. 
Let $d = d(\pi_{q}(S^{p}))$ be the generating set rank and suppose that $r_{p,q}(M) \geq g$.
Then the geometric realization $|\bar{K}^{\partial}_{\bullet}(M)_{p,q}|$ is $\tfrac{1}{2}(g-4-d)$-connected.
\end{corollary}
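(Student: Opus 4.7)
The plan is to deduce Corollary \ref{corollary: high connectivity of main flag complex} from Theorem \ref{theorem: high-connectivity} by comparing the semi-simplicial space $\bar{K}^{\partial}_\bullet(M)_{p,q}$ to the simplicial complex $K^{\partial}(M)_{p,q}$ through two comparison steps whose composite is highly connected in the required range.

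First, I would show that the inclusion $\bar{K}^{\partial}_\bullet(M)_{p,q} \hookrightarrow K^{\partial}_\bullet(M)_{p,q}$ induces a weak equivalence on geometric realizations. Since $\widehat{V}_{p,q}$ is compact, pairwise disjointness of the full images $\phi_i(\widehat{V}_{p,q})$ is an open condition in the $C^\infty$ topology, so $\bar{K}^{\partial}_l$ sits as an open subspace of $K^{\partial}_l$. The reverse deformation relies on a shrinking argument: the core $B_{p,q}$ is a deformation retract of $\widehat{V}_{p,q}$ by Construction \ref{construction: main probing manifold}, so given pairwise disjoint cores $\phi_i(B_{p,q})$ one may choose pairwise disjoint open neighborhoods $U_i$ and continuously isotope each $\phi_i$ to an embedding whose image lies in $U_i$. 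Parameterized over a compact family, the space of such simultaneous shrinkings is contractible (controlled by the contractible space of tubular neighborhoods of a compact submanifold), yielding a homotopy inverse at each simplicial level and hence on realizations.

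Second, I would compare $|K^{\partial}_\bullet(M)_{p,q}|$ to $|K^{\partial}(M)_{p,q}|$. The vertex set of $K^{\partial}(M)_{p,q}$ carries a canonical ordering via the $\R$-coordinate $t$, and an unordered $l$-simplex of $K^{\partial}(M)_{p,q}$ determines a unique ordered $l$-simplex of $K^{\partial}_\bullet(M)_{p,q}$, so the underlying semi-simplicial set (with discrete topology on each level) has realization canonically homotopy equivalent to $|K^{\partial}(M)_{p,q}|$. To upgrade from the discretized to the $C^\infty$-topologized version I would apply the standard microfibration / flag resolution technique of Galatius--Randal-Williams \cite{GRW 14}, exploiting that the spaces $K^{\partial}_l$ are built from spaces of smooth embeddings of the compact manifold $\widehat{V}_{p,q}$ and are therefore locally well-behaved.

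Composing the two highly connected comparisons with Theorem \ref{theorem: high-connectivity} gives the desired $\tfrac{1}{2}(g-4-d)$-connectivity of $|\bar{K}^{\partial}_\bullet(M)_{p,q}|$. The main obstacle I anticipate is the second step: the shrinking deformation in the first step is a direct geometric manipulation, whereas transferring high connectivity from the discrete simplicial complex to the topologized semi-simplicial space requires a careful microfibration argument. Fortunately, this is by now a standard technique in the subject, and the local triviality inputs needed for it are available in our smooth embedding setting.
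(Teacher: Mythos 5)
Your approach matches the paper's: you compare $\bar{K}^{\partial}_\bullet$ to $K^{\partial}_\bullet$ levelwise by a shrinking argument (cf.\ \cite[Corollary 5.8]{GRW 14}), then identify the discretized semi-simplicial realization with $|K^{\partial}(M)_{p,q}|$ via the $t$-ordering and invoke \cite[Theorem 5.5]{GRW 14} to transfer the connectivity bound to the topologized realization $|K^{\partial}_\bullet(M)_{p,q}|$. The one ingredient you should name explicitly is that this discrete-to-topologized transfer requires $lCM(K^{\partial}(M)_{p,q}) \geq \tfrac{1}{2}(g-1-d)$, which is the second conclusion of Theorem~\ref{theorem: high-connectivity}, and not merely general local triviality of embedding spaces.
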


\begin{proof}[Proof of Corollary \ref{corollary: high connectivity of main flag complex}]
The result is obtained from Theorem \ref{theorem: high-connectivity} by assembling several results from \cite{GRW 14}.
Let $g \in \Z_{\geq 0}$ be given.  
Consider the discretization $K^{\partial}_{\bullet}(M)^{\delta}_{p,q}$. 
This is the semi-simplicial space obtained by defining each $K^{\partial}_{l}(M)^{\delta}_{p,q}$ to have the same underlying set as $K^{\partial}_{l}(M)_{p,q}$, but topologized in the discrete topology. 
The correspondence, 
$
((\phi_{0}, t_{0}), \dots, (\phi_{l}, t_{l})) \mapsto \{(\phi_{0}, t_{0}), \dots, (\phi_{l}, t_{l})\},
$
induces a map 
$
|K^{\partial}_{\bullet}(M)^{\delta}_{p,q}| \longrightarrow |K^{\partial}(M)_{p,q}|
$
which is easily seen to be a homeomorphism. 
Indeed, if $\sigma = \{(\phi_{0}, t_{0}), \dots, (\phi_{l}, t_{l})\} \leq K^{\partial}(M)_{p,q}$ is a simplex, then $t_{i} \neq t_{j}$ if $i\neq j$, and so there is exactly one $l$-simplex in $K^{\partial}_{\bullet}(M)^{\delta}_{p,q}$ with underlying set equal to $\sigma$.
It follows from this that $|K^{\partial}_{\bullet}(M)^{\delta}_{p,q}|$ is $\tfrac{1}{2}(g-4-d)$-connected. 
Next, we compare the connectivity of $|K^{\partial}_{\bullet}(M)^{\delta}_{p,q}|$ to $|K^{\partial}_{\bullet}(M)_{p,q}|$. 
By replicating the same argument used in the proof of \cite[Theorem 5.5]{GRW 14} it follows that the connectivity of $|K^{\partial}_{\bullet}(M)_{p,q}|$ is bounded below by the connectivity of $|K^{\partial}_{\bullet}(M)^{\delta}_{p,q}|$, and thus $|K^{\partial}_{\bullet}(M)_{p,q}|$ is $\tfrac{1}{2}(g-4-d)$-connected as well. 
We remark that this proof from \cite{GRW 14} uses the fact that $lCM(K^{\partial}(M)_{p,q}) \geq \tfrac{1}{2}(g-1-d)$ (and this is established in Theorem \ref{theorem: high-connectivity}).
Finally, to finish the proof of the lemma we observe that the inclusion $K^{\partial}_{\bullet}(M)_{p,q} \hookrightarrow \bar{K}^{\partial}_{\bullet}(M)_{p,q}$ is a levelwise weak homotopy equivalence. 
This fact is proven by employing the same argument used in the proof of \cite[Corollary 5.8]{GRW 14}.
This completes the proof of the lemma. 
\end{proof}

High-connectivity of the space $|\bar{K}^{\partial}_{\bullet}(M)_{p,q}|$ is the main ingredient needed for the proof of Theorem \ref{theorem: main homological stability theorem}, which we carry out in Section \ref{section: homological stability}. 
The main ingredient in proving Corollary \ref{corollary: high connectivity of main flag complex} was Theorem \ref{theorem: high-connectivity}. 
The next three sections then are geared toward developing all of the technical tools needed to prove Theorem \ref{theorem: high-connectivity}.

\section{The Algebraic Invariants} \label{Section: The Algebraic Invariants}
\subsection{Invariants of manifold pairs} \label{Subsection: invariants mfd boundary}
For what follows, let $M$ be a compact manifold of dimension $m$ with non-empty boundary. 
Let $A \subset \partial M$ be a submanifold of dimension $m-1$. 
We will keep $M$ and $A$ fixed throughout the entire section. 
Let $p$ and $q$ be positive integers with $p + q +1 = m $ and suppose further that the following inequalities are satisfied:
\begin{equation} \label{equation: inequalities mfd with boundary}
|q - p| < \min\{p, q\} - 2, \quad |q - p|  < \min\{\kappa(A), \kappa(M, A)\} - 1,
\end{equation}
where recall $\kappa(A)$ and $\kappa(M, A)$ denote the degrees of connectivity of $A$ and $(M, A)$ respectively. 
We will consider the homotopy groups $\pi_{q}(A)$ and $\pi_{p+1}(M, A)$.
We will need to be able to represent elements of these homotopy groups by smooth embeddings. 
The next lemma follows by assembling several results from \cite{H 69} and \cite{W 63};
part (i) of the lemma follows by combining the results \cite[Corollary 1.1]{H 69} and \cite[Corollary 2.1]{H 69} and
part (ii) follows by combining the results \cite[Lemma 1]{W 63} and \cite[Proposition 1]{W 63}.

\begin{lemma} \label{lemma: basic embedding lemma}
Let $M$ be a manifold of dimension $m$ with non-empty boundary and let
$A \subset \partial M$ be a submanifold of dimension $m-1$.
Let $p$ and $q$ be positive integers such that $p + q + 1 = m$ and suppose that (\ref{equation: inequalities mfd with boundary}) holds. 
We may then draw the following conclusions about the homotopy groups $\pi_{p+1}(M, A)$ and $\pi_{q}(A)$:
\begin{enumerate} \itemsep.2cm
\item[(i)] Any element of $\pi_{p+1}(M, A)$ can be represented by an embedding $(D^{p+1}, S^{p}) \longrightarrow (M, A)$, unique up to isotopy.
\item[(ii)] Any element of $\pi_{q}(A)$ can be represented by an embedding $S^{q} \longrightarrow A$, unique up to regular homotopy through immersions.
\end{enumerate}
\end{lemma}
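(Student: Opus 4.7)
The plan is to verify that the hypotheses on dimensions and connectivity place us within the hypotheses of the classical embedding and immersion theorems cited from \cite{H 69} and \cite{W 63}, and then invoke them directly. Both parts amount to bookkeeping: the two inequalities in the statement are precisely what is needed to match the hypotheses of the cited theorems.

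For part (i), I would start with any smooth map of pairs $f: (D^{p+1}, S^{p}) \to (M, A)$ representing the given class in $\pi_{p+1}(M, A)$. The goal is to homotope $f$, through maps of pairs, to an embedding. Haefliger's relative embedding theorem guarantees this whenever the source dimension $p+1$, the target dimension $m = p+q+1$, and the connectivity of the pair $(M, A)$ lie in the appropriate metastable range of the form $2(p+1) \leq m + \kappa(M, A)$, together with an analogous inequality for the boundary restriction $S^{p} \to A$. The symmetric condition $|q-p| < \min\{p, q\} - 2$ controls the source/target ``shape'' and ensures these inequalities can be met; the connectivity condition $|q-p| < \min\{\kappa(A), \kappa(M, A)\} - 1$ supplies enough ambient connectivity to close the gap when $p > q$. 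The isotopy uniqueness is then obtained by applying the same theorem one dimension higher to an embedding problem for $(D^{p+1}\times I, S^{p}\times I)$ inside $(M\times I, A\times I)$; both source and target dimensions shift by one and the same inequalities persist.

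For part (ii), I would start with a map $f: S^{q} \to A$ representing a class in $\pi_{q}(A)$. Existence of an embedded representative together with uniqueness up to regular homotopy follow from Wall's two cited results: the Hirsch--Smale immersion theorem (applicable as soon as $q < \dim A = p+q$, i.e.\ for $p \geq 1$) classifies maps $S^{q} \to A$ up to regular homotopy via their induced bundle data, and a general-position argument removes the double points of an immersion to produce an embedding whenever $2q \leq \dim A + \kappa(A)$, i.e.\ $\kappa(A) \geq q - p$. The hypothesis $|q-p| < \kappa(A) - 1$ yields $\kappa(A) \geq |q-p| + 2$, which handles both cases $q \geq p$ and $q < p$ comfortably.

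The main ``obstacle'' is not a mathematical difficulty but the careful bookkeeping needed to see that the two hypotheses bundle in precisely the right way: the shape condition $|q-p| < \min\{p, q\} - 2$ governs the source/target dimension asymmetry (when the handles can fit at all), and the connectivity condition $|q-p| < \min\{\kappa(A), \kappa(M, A)\} - 1$ supplies the ambient connectivity needed to reach the metastable range. Once the correspondence between these inequalities and the hypotheses of the Haefliger and Wall theorems is spelled out, both parts of the lemma follow immediately.
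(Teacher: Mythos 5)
Your proposal takes essentially the same approach as the paper: it is a citation assembly plus dimensional bookkeeping, and the paper's ``proof'' is literally the one-sentence assertion that the lemma follows by combining \cite[Corollaries 1.1, 2.1]{H 69} and \cite[Lemma 1, Proposition 1]{W 63}. The bookkeeping you perform (translating $|q-p| < \min\{\kappa(A), \kappa(M,A)\} - 1$ and $|q-p| < \min\{p,q\} - 2$ into the metastable-range inequalities $2(p+1) \leq m + \kappa(M,A)$ and $2q \leq \dim A + \kappa(A)$) is correct. One minor slip in attribution: the reference \cite{H 69} in this paper is to Hudson's \emph{Embeddings of bounded manifolds}, not to a theorem of Haefliger, although Hudson's results are precisely the relative/boundary extension of the Haefliger--Zeeman metastable embedding theory you invoke, so the mathematical content you use is the right one. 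You also establish isotopy uniqueness in (i) by a concordance-to-isotopy argument applied to $(D^{p+1}\times I, S^p\times I)$ rather than by citing a uniqueness statement directly; both routes are valid in the metastable range, and your check that one extra unit of connectivity (which the hypotheses provide) suffices is correct.
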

We first define a bilinear map 
\begin{equation} \label{equation: boundary tau}
\tau^{\partial}_{p,q}: \pi_{p+1}(M, A)\otimes\pi_{q}(S^{p}) \longrightarrow \pi_{q}(A), \quad ([f], [\phi]) \; \mapsto \; [(f|_{\partial D^{p+1}})\circ \phi],
\end{equation} 
where $f: (D^{p+1}, \partial D^{p+1}) \longrightarrow (M, A)$ represents a class in $\pi_{p+1}(M, A)$ and $\phi: S^{q} \longrightarrow S^{p}$ represents a class in $\pi_{q}(S^{p})$.
By the inequalities of (\ref{equation: inequalities mfd with boundary}), it follows from the \textit{Freudenthal suspension theorem} that the suspension homomorphism $\Sigma: \pi_{q-1}(S^{p-1}) \longrightarrow \pi_{q}(S^{p})$ is surjective. 
For this reason the formula in (\ref{equation: boundary tau}) is indeed bilinear and thus $\tau^{\partial}_{p, q}$ is well defined. 

We then have a bilinear intersection pairing
\begin{equation} \label{equation: lambda intersection}
\lambda^{\partial}_{p, q}: \pi_{p+1}(M, A)\otimes\pi_{q}(A) \longrightarrow \Z, 
\end{equation}
which is defined by sending a pair $([f], [g]) \in \pi_{p+1}(M, A)\otimes\pi_{q}(A)$ to the oriented algebraic intersection number associated to the maps 
$
f|_{\partial D^{p+1}}: S^{p} \longrightarrow A$ and $g: S^{q} \longrightarrow A,$ 
which we may assume are embeddings by Lemma \ref{lemma: basic embedding lemma}. 

We have a $(-1)^{q}$-symmetric bilinear pairing
\begin{equation} \label{equation: symmetric intersection form}
\mu_{q}: \pi_{q}(A)\otimes \pi_{q}(A) \longrightarrow \pi_{q}(S^{p})
\end{equation}
defined in the same way as in \cite[Construction 3.1]{P 15a}. 
We refer the reader there for the definition. 

The next proposition shows how the maps $\tau^{\partial}_{p, q}$, $\lambda^{\partial}_{p, q}$, and $\mu_{q}$ are related to each other. 
The proof follows from \cite[Proposition 3.4]{P 15a}.
\begin{proposition} \label{proposition: lambda-mu linearity}
Let $x \in \pi_{p}(M, A)$, $y \in \pi_{q}(A)$ and $z \in \pi_{q}(S^{p})$. 
Then the equation
$$\mu_{q}(\tau^{\partial}_{p, q}(y, z), x) = \lambda^{\partial}_{p, q}(y, x)\cdot z$$
is satisfied.
\end{proposition}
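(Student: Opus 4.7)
The plan is to reduce this identity to its closed-manifold counterpart, \cite[Proposition 3.4]{P 15a}, by observing that both sides depend only on data intrinsic to the face $A\subset\partial M$. (I read the types so that $y\in\pi_{p+1}(M,A)$ and $x\in\pi_{q}(A)$, these being the ones compatible with the definitions of $\tau^{\partial}_{p,q}$ and $\lambda^{\partial}_{p,q}$.)

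First, I would invoke Lemma \ref{lemma: basic embedding lemma} to choose geometric representatives: an embedding $f:(D^{p+1},S^{p})\hookrightarrow(M,A)$ for $y$, an embedding $g:S^{q}\hookrightarrow A$ for $x$, and a smooth map $\phi:S^{q}\to S^{p}$ for $z$. After a small isotopy I can arrange that $f|_{S^{p}}$ and $g$ meet transversally in $A$, so that $\lambda^{\partial}_{p,q}(y,x)$ is the algebraic sum of the finitely many signed intersection points. The class $\tau^{\partial}_{p,q}(y,z)$ is then represented by the composition $(f|_{S^{p}})\circ\phi:S^{q}\to A$, which after perturbation is a self-transverse immersion.

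Next, I would unpack the two sides geometrically. The right-hand side is, by definition, a signed sum of copies of $z\in\pi_{q}(S^{p})$, one for each point of $f|_{S^{p}}\pitchfork g$. For the left-hand side, the form $\mu_q$ from \cite[Construction 3.1]{P 15a} counts transverse intersection points of $(f|_{S^{p}}\circ\phi)(S^{q})$ with $g(S^{q})$, each weighted by a normal-framing class in $\pi_{q}(S^{p})$. The key local computation is that the $\phi$-preimage of a point $a\in f|_{S^{p}}\cap g$ contributes precisely $\pm z$ to $\mu_q$, the sign being the oriented intersection sign of $a$. Summing over the points of $f|_{S^{p}}\pitchfork g$ then produces the required equality.

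The main obstacle is the framing bookkeeping in the definition of $\mu_q$: verifying that the local contribution at each crossing is exactly $\pm z$ rather than a more complicated framed class. Rather than redo this computation, my plan is to observe that both $\tau^{\partial}_{p,q}$ and $\lambda^{\partial}_{p,q}$ depend only on the boundary restriction $f|_{S^{p}}:S^{p}\to A$ (the bulk of $M$ never enters), while $\mu_q$ is intrinsic to $A$. The identity thus reduces verbatim to \cite[Proposition 3.4]{P 15a} applied inside $A$ to the classes $[f|_{S^{p}}]\in\pi_{p}(A)$, $[g]\in\pi_{q}(A)$, and $[\phi]\in\pi_{q}(S^{p})$. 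The Freudenthal range provided by the inequalities (\ref{equation: inequalities mfd with boundary}) then supplies the bilinearity needed to pass from the chosen geometric representatives to arbitrary homotopy classes, completing the reduction.
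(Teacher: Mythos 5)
Your proposal is correct and takes essentially the same route as the paper, which proves this proposition with a one-line citation to \cite[Proposition 3.4]{P 15a}. Your key observation—that $\tau^{\partial}_{p,q}(y,z)$ and $\lambda^{\partial}_{p,q}(y,x)$ factor through the boundary class $\partial y=[f|_{S^{p}}]\in\pi_{p}(A)$, so that both sides of the identity live entirely in the Wall-form data intrinsic to $A$ and the statement becomes verbatim the closed-manifold version applied to $A$—is exactly the content the paper leaves implicit when it says the proof ``follows from'' that reference; you also correctly diagnose the type mismatch in the statement (it should read $y\in\pi_{p+1}(M,A)$, $x\in\pi_{q}(A)$). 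The intermediate geometric description of $\mu_{q}$ as a framing-weighted intersection count is only motivational in your write-up and is not load-bearing, so the minor imprecision there does no harm.
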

\begin{remark}
We remark that the formula in Proposition \ref{proposition: lambda-mu linearity} makes sense even in the case when $q < p$ and thus $\pi_{q}(S^{p}) = 0$. 
\end{remark}

We will also need to consider a function 
\begin{equation}
\alpha_{q}: \pi_{q}(A) \longrightarrow \pi_{q-1}(SO_{p})
\end{equation}
defined by sending $x \in \pi_{q}(A)$ to the element in $\pi_{q-1}(SO_{p})$ which classifies the normal bundle associated to an embedding $S^{q} \rightarrow A$ which represents $x$. 

The map $\alpha_{q}$ is not in general a homomorphism. 
As will be seen in Proposition \ref{proposition: additivity defect}, the bilinear form $\mu_{q}$ measures the failure of $\alpha_{q}$ to preserve additivity. 
In order to describe the relationship between $\alpha_{q}$ and $\mu_{q}$, we must define some auxiliary homomorphisms. 
Let 
$\bar{\pi}_{q}: \pi_{q-1}(SO_{p}) \longrightarrow \pi_{q}(S^{p})$
be the map given by the composition $\pi_{q-1}(SO_{p}) \longrightarrow \pi_{q-1}(S^{p-1}) \longrightarrow \pi_{q}(S^{p})$, where the first map is induced by the bundle projection $SO_{p} \longrightarrow SO_{p}/SO_{p-1} \cong S^{p-1}$, and the second is the suspension homomorphism. 
Let 
$d_{q}: \pi_{q}(S^{p}) \longrightarrow \pi_{q-1}(SO_{p})$
be the boundary homomorphism associated to the fibre sequence 
$SO_{p} \longrightarrow SO_{p+1} \longrightarrow S^{p}.$

\begin{proposition} \label{proposition: additivity defect}
The following equations are satisfied for all $x, y \in \pi_{q}(A)$:
$$
\begin{aligned}
\alpha_{q}(x + y) \; &= \; \alpha_{q}(x) + \alpha_{q}(y) + d_{q}(\mu_{t}(x, y)), \\
\mu_{q}(x, x) \; &= \; \bar{\pi}_{q}(\alpha_{q}(x)).
\end{aligned}
$$
\end{proposition}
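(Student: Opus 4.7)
The plan is to adapt the closed-manifold arguments of \cite{P 15a} to the present relative setting; both formulas are local statements about embedded $q$-spheres in the $(m-1)$-dimensional manifold $A$, so the fact that $A$ sits inside $\partial M$ plays no role beyond allowing us to invoke Lemma~\ref{lemma: basic embedding lemma}(ii) to realize elements of $\pi_q(A)$ by embeddings.

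For the identity $\mu_q(x,x) = \bar{\pi}_q(\alpha_q(x))$, first use Lemma~\ref{lemma: basic embedding lemma}(ii) to pick an embedding $f : S^q \hookrightarrow A$ representing $x$, with normal bundle $\nu_f$ of rank $p$ classified by $\alpha_q(x) \in \pi_{q-1}(SO_p)$. Following the definition of $\mu_q$ from \cite[Construction 3.1]{P 15a}, compute $\mu_q(x,x)$ by choosing a generic section $s$ of $\nu_f$, exponentiating to a nearby perturbation $f'$, and applying the Pontryagin--Thom construction to the signed transverse intersection points of $f(S^q) \cap f'(S^q)$. These points correspond exactly to the zeros of $s$, and the resulting class in $\pi_q(S^p)$ is, by the clutching construction, the image of $\alpha_q(x)$ under $\pi_{q-1}(SO_p) \to \pi_{q-1}(S^{p-1}) \xrightarrow{\Sigma} \pi_q(S^p)$. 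This is exactly $\bar{\pi}_q(\alpha_q(x))$.

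For the additivity defect $\alpha_q(x+y) = \alpha_q(x) + \alpha_q(y) + d_q(\mu_q(x,y))$, choose embedded transverse representatives $f, g : S^q \hookrightarrow A$ of $x$ and $y$ in general position. A representative $h$ of $x + y$ is produced by an embedded connect sum along an arc $\gamma$ from $f(S^q)$ to $g(S^q)$. The normal bundle $\nu_h$ is obtained by gluing $\nu_f$ to $\nu_g$ along a tubular neighborhood of $\gamma$; away from the residual transverse intersections of $f$ and $g$, this gluing contributes only $\alpha_q(x) + \alpha_q(y)$. At each transverse double point, a standard local model (a pair of coordinate $q$-planes meeting in a point inside $\mathbb{R}^{p+q}$) shows that the clutching data needed to assemble $\nu_h$ carries an extra twist which, under the fibre sequence $SO_p \to SO_{p+1} \to S^p$, is precisely the image under $d_q$ of the corresponding local contribution in $\pi_q(S^p)$. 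Summing over all double points, this total correction is $d_q(\mu_q(x,y))$.

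The genuinely delicate step is the second formula, where the bookkeeping identifying the Pontryagin--Thom class of the self-intersection with $\bar{\pi}_q \alpha_q(x)$ requires careful tracking of framings and orientations; this calculation is precisely what is carried out in \cite{P 15a}, and since it is entirely local in a tubular neighborhood of $f(S^q) \subset A$ it applies verbatim here. The additivity formula is then a local patching computation in the complement of the connect-sum tube. Taken together, the proposition reduces to citing the closed-case results and verifying that all constructions stay inside a tubular neighborhood of the embedded spheres in $A$.
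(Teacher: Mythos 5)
Your proposal is correct and matches the paper's own treatment: the paper states Proposition~\ref{proposition: additivity defect} without a written proof, implicitly deferring to the closed-manifold arguments in \cite{P 15a} (the definition of $\mu_q$ is already imported from \cite[Construction 3.1]{P 15a} and the surrounding propositions all cite \cite{P 15a} directly), which is exactly the route you take. Your geometric sketch is a reasonable reconstruction of that argument; the one imprecision is that when $q > p$ the transverse intersection $f(S^q)\cap g(S^q)$ in the $(p+q)$-manifold $A$ is a $(q-p)$-dimensional submanifold rather than a collection of isolated double points, so the local ``coordinate-planes-meeting-at-a-point'' model should be read as a normal-form statement along this intersection locus, but since you explicitly defer the framing bookkeeping to \cite{P 15a} this does not constitute a gap.
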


We will also need the following proposition which describes how $\alpha_{q}$ and $\tau^{\partial}_{p, q}$ are related. 
\begin{proposition}
For all $(x, z) \in \pi_{p+1}(M, A)\times\pi_{q}(S^{p})$, we have $\alpha_{q}(\tau^{\partial}_{p,q}(x, z)) = 0$. 
\end{proposition}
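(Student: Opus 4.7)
The plan is to represent $\tau^\partial_{p,q}(x,z)\in\pi_q(A)$ by an embedded sphere $S^q\hookrightarrow A$ that bounds a neatly embedded $(q+1)$-disk in $M$; triviality of the normal bundle of the disk (because it is contractible) will force triviality of the normal bundle of $S^q$ in $A$, so that $\alpha_q$ vanishes on the class.

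First, I would use Lemma \ref{lemma: basic embedding lemma}(i) to represent $x$ by a neat embedding $f:(D^{p+1},S^p)\hookrightarrow(M,A)$. Its normal bundle $\nu_f$ in $M$ is trivial (contractible base), and the elementary splitting of $TM|_{S^p}$ using neatness identifies $\nu_{f|_{S^p}\subset A}$ with the restriction $\nu_f|_{S^p}$, so $f|_{S^p}$ has a tubular neighborhood of the form $S^p\times D^q\subset A$ which extends to a tubular neighborhood $\tilde f:D^{p+1}\times D^q\hookrightarrow M$ meeting $\partial M$ exactly along $S^p\times D^q$. Next, represent $z$ by a smooth map $\phi:S^q\to S^p$ and extend to $\bar\phi:D^{q+1}\to D^{p+1}$ (possible because $D^{p+1}$ is contractible). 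Composing with $\tilde f$ and the zero section yields a map $F=\tilde f\circ(\bar\phi,0):D^{q+1}\to M$ whose restriction to $S^q$ takes values in $A$ and represents $\tau^\partial_{p,q}(x,z)$.

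The main step is to upgrade $F$ to a neat embedding $\tilde F:(D^{q+1},S^q)\hookrightarrow(M,A)$ with image inside $\tilde f(D^{p+1}\times D^q)$. In the metastable range $q\le 2p-4$ this follows from Haefliger's embedding theorem for maps of pairs, applied to $F$. The hypothesis $|q-p|<\min(p,q)-2$ only guarantees $q\le 2p-3$, so for the borderline case $q=2p-3$ a direct perturbation is needed: arrange $\bar\phi$ to be generic (so that generic fibres have dimension $q-p$), and perturb the second coordinate by a map $\beta:D^{q+1}\to D^q$ chosen to be a fibrewise embedding, which is possible since embeddings of $(q-p)$-manifolds in $D^q$ exist by general position whenever $2(q-p)+1\le q$, i.e.\ $q\le 2p-1$. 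Then $\tilde F=\tilde f\circ(\bar\phi,\beta)$ is a neat embedding by construction. Once $\tilde F$ is in hand, its normal bundle is trivial (contractible base), and the same $TM$-splitting at the boundary that was used for $f$ identifies the normal bundle of $\tilde F(S^q)\subset A$ with the restriction $\nu_{\tilde F}|_{S^q}$, which is trivial. Since Lemma \ref{lemma: basic embedding lemma}(ii) and the regular-homotopy invariance of the normal bundle together identify its isomorphism class with $\alpha_q(\tau^\partial_{p,q}(x,z))$, we conclude that this class vanishes. The main obstacle is producing $\tilde F$ in the borderline metastable case, which requires a careful genericity argument for $\bar\phi$ and the fibrewise-embedding perturbation $\beta$.
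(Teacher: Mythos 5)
Your geometric idea is essentially the same one that powers the paper's proof --- a neat contractible disk has trivial normal bundle, and the boundary sphere inherits that triviality via the $TM$-splitting at $\partial M$ --- but you apply it at the wrong dimension, and this is where the argument breaks. The paper applies the observation only to the $(p+1)$-disk $f(D^{p+1})$ representing $x$, whose embeddability is already guaranteed by Lemma \ref{lemma: basic embedding lemma}(i) since $D^{p+1}$ has the favourable codimension $q$. This gives $\alpha_{p}(\partial_{p+1}x)=0$, and the result is then transferred to the $q$-sphere by a purely formal identity $\alpha_{q}(\tau^{\partial}_{p,q}(x,z)) = F_{p,q}(\alpha_{p}(\partial_{p+1}x),\,z)$ from \cite[Proposition~3.8]{P 15a}, together with the bilinearity of $F_{p,q}$. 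No $(q+1)$-dimensional disk is ever embedded.

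Your version instead asks for a neat embedding $(D^{q+1},S^{q})\hookrightarrow (M,A)$, i.e.\ codimension $p$, which is genuinely outside the metastable range the hypotheses provide. As you compute, Haefliger requires $q\le 2p-4$, whereas $|q-p|<\min\{p,q\}-2$ (in the relevant case $q\ge p$; if $q<p$ the statement is vacuous) only gives $q\le 2p-3$, so $q=2p-3$ is a legitimate gap --- e.g.\ $(p,q)=(5,7)$. The proposed repair does not close it. Requiring $(\bar\phi,\beta)$ to be injective amounts to requiring that $\beta(x)\ne\beta(y)$ on the double-point set $S=\{(x,y):x\ne y,\ \bar\phi(x)=\bar\phi(y)\}$, which for generic $\bar\phi$ has dimension $2(q+1)-(p+1)=2q-p+1$. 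Imposing $\beta(x)=\beta(y)$ is a codimension-$q$ condition, so for generic $\beta$ the failure locus $S\cap\{\beta(x)=\beta(y)\}$ has dimension $(2q-p+1)-q=q-p+1\ge 1$ whenever $q\ge p$. General position therefore cannot eliminate the self-intersections; the ``embeddings of $(q-p)$-manifolds in $D^{q}$ exist'' count addresses a single fibre, not the coherence of $\beta$ across all of $D^{q+1}$, and the fibres of $\bar\phi$ over critical values are not even manifolds. Removing a positive-dimensional double-point set would require a Whitney/Wells-type disjunction argument, at which point you have recreated, at far greater cost, the machinery the paper sidesteps by working with the $(p+1)$-disk and citing \cite[Proposition~3.8]{P 15a}.

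The pieces of your argument that are correct: the identification $\nu(S^{q},A)\cong\nu(D^{q+1},M)|_{S^{q}}$ for a neat disk, the fact that $\tilde f\circ(\bar\phi,0)|_{S^{q}}$ represents $\tau^{\partial}_{p,q}(x,z)$, and the appeal to Lemma \ref{lemma: basic embedding lemma}(ii) to conclude that the normal bundle of any embedded representative computes $\alpha_{q}$. What is missing is the embedded disk itself, and that is exactly the ingredient the paper's argument is designed to avoid.
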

\begin{proof}
Let $\partial_{p+1}: \pi_{p+1}(M, A) \longrightarrow \pi_{p}(A)$ denote the boundary map. 
In \cite[Page 9]{P 15a} a bilinear map 
$$F_{p,q}: \pi_{p-1}(SO_{q})\otimes\pi_{q}(S^{p}) \longrightarrow \pi_{q-1}(SO_{p})$$ 
is defined. 
By \cite[Proposition 3.8]{P 15a} (combined with the definition of the map $\tau_{p,q}^{\partial}$) it follows that 
$$
\alpha_{q}(\tau^{\partial}_{p,q}(x, z)) = F_{p,q}(\alpha_{p}(\partial_{p+1}(x)), \; z)
$$
for all $(x, z) \in \pi_{p+1}(M, A)\times\pi_{q}(S^{p})$. 
Let $f: (D^{p+1}, \partial D^{p+1}) \longrightarrow (M, A)$ be an embedding that represents the class $x \in \pi_{p+1}(M, A)$.
Notice that the normal bundle of $f(D^{p+1})$ is automatically trivial since the disk is contractible. 
It follows that $f|_{\partial D^{p+1}}(\partial D^{p+1}) \subset A$ has trivial normal bundle as well, and thus it follows that $\alpha_{p}(\partial_{p+1}(x)) = 0$ since $f|_{\partial D^{p+1}}$ represents the class $\partial_{p+1}x \in \pi_{p}(A)$. 
Since $F_{p,q}$ is bilinear, it follows that 
$$ \alpha_{q}(\tau^{\partial}_{p,q}(x, z)) \; = \; F_{p,q}(\alpha_{p}(\partial_{p+1}(x)), \; z) = \; F_{p,q}(\alpha_{p}(0), \; z) \; = \;  0$$ 
for all $x$ and $z$. 
This concludes the proof of the proposition.
\end{proof}

Using the maps defined above we will work with the algebraic structure defined by the six-tuple 
\begin{equation} \label{equation: relative Wall-form}
\left(\pi_{p+1}(M, A), \; \pi_{q}(A), \; \tau^{\partial}_{p, q}, \; \lambda^{\partial}_{p, q}, \; \mu_{q}, \; \alpha_{q} \right). 
\end{equation}
We refer to this structure as the \textit{Wall form} associated to the pair $(M, A)$. 
We summarize the salient properties of (\ref{equation: relative Wall-form}) in the following lemma. 
This lemma should be compared to \cite[Lemma 3.9]{P 15a}.
\begin{lemma} \label{lemma: wall form salient properties}
Let $(M, A)$, $p$, and $q$ be exactly as above. The maps 
\begin{itemize} \itemsep.1cm
\item $\tau^{\partial}_{p,q}: \pi_{p+1}(M, A)\otimes\pi_{q}(S^{p}) \longrightarrow \pi_{q}(A)$,
\item $\lambda^{\partial}_{p, q}: \pi_{p+1}(M, A)\otimes\pi_{q}(A) \longrightarrow \Z$,
\item $\mu_{q}: \pi_{q}(A)\otimes\pi_{q}(A) \longrightarrow \pi_{q}(S^{p})$,
\item $\alpha_{q}: \pi_{q}(A) \longrightarrow \pi_{q-1}(SO_{p})$,
\end{itemize}
satisfy the following conditions. For all $x,  x' \in \pi_{p+1}(M, A)$, $y,  y' \in \pi_{q}(A)$ and $z \in \pi_{q}(S^{p})$ we have:
\begin{enumerate} \itemsep.1cm
\item[(i)] 
$\lambda^{\partial}_{p, q}(x, \; \tau^{\partial}_{p,q}(x', z)) = 0,$
\item[(ii)] 
$\mu_{q}(\tau^{\partial}_{p,q}(x,  z), \; y) \; = \; \lambda^{\partial}_{p,q}(x, \; y)\cdot z,$
\item[(iii)]
$\alpha_{q}(y + y') \; = \; \alpha_{q}(y) + \alpha_{q}(y') + d_{q}(\mu_{q}(y, y')),$
\item[(iv)]
$\mu_{q}(y, y) = \bar{\pi}_{q}(\alpha_{q}(y)),$
\item[(v)] 
$\alpha_{q}(\tau^{\partial}_{p,q}(x, z)) = 0$.
\end{enumerate}
\end{lemma}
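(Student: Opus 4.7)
The plan is to observe that identities (ii)--(v) are essentially restatements of results already proven earlier in this subsection, and to dispatch identity (i) by a direct geometric argument. Specifically, (ii) is Proposition~\ref{proposition: lambda-mu linearity} with the arguments permuted; identities (iii) and (iv) together constitute Proposition~\ref{proposition: additivity defect}; and identity (v) is the unnamed proposition $\alpha_{q}(\tau^{\partial}_{p,q}(x,z)) = 0$ that immediately precedes the lemma. Thus the only content requiring a fresh argument is identity (i).

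For (i), the first step is the trivial reduction: if $q < p$ then $\pi_{q}(S^{p}) = 0$ by the inequalities in (\ref{equation: inequalities mfd with boundary}) and connectivity of $S^{p}$, so $z = 0$ and both sides vanish. Assume therefore $q \geq p$. Using Lemma~\ref{lemma: basic embedding lemma}(i), represent $x$ and $x'$ by embedded pairs $f, f' : (D^{p+1}, S^{p}) \hookrightarrow (M, A)$, and choose a smooth representative $\phi : S^{q} \to S^{p}$ of $z$. Then $\tau^{\partial}_{p,q}(x', z)$ is represented by the composite $g := f'|_{S^{p}} \circ \phi : S^{q} \to A$, whose image is contained in the embedded $p$-sphere $f'(S^{p}) \subset A$. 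By definition, $\lambda^{\partial}_{p,q}(x, \tau^{\partial}_{p,q}(x', z))$ is the oriented transverse intersection number of $f|_{S^{p}}$ and $g$ in $A$, so it suffices to produce a perturbation of $g$ within $A$ whose image is disjoint from $f(S^{p})$.

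When $q > p$, the dimension of $A$ is $p + q > 2p$, so general position (combined with the connectivity hypothesis $\kappa(A) > |q - p| + 1$ to supply enough room to isotope $f'(S^{p})$ off of $f(S^{p})$) forces $f(S^{p}) \cap f'(S^{p}) = \emptyset$, and the inclusion $g(S^{q}) \subset f'(S^{p})$ then gives the desired disjointness immediately. The main obstacle is the borderline case $q = p$, where naive dimension counts no longer produce disjointness. Here I expect to exploit the fact that the normal bundle of $f(S^{p})$ in $A$ is stably trivial --- inherited from the trivial normal bundle of the disk $f(D^{p+1})$ in $M$ together with the codimension-one inclusion $A \subset \partial M$ --- and to use a non-vanishing normal section pulled back along $\phi$ to push $g$ off of $f(S^{p})$ inside $A$. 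This last step is the delicate one; the precise sign and degree bookkeeping should follow the template of the closed-manifold analogue established as \cite[Lemma 3.9]{P 15a}, adapted to the relative pair $(M, A)$ by systematically replacing $\pi_{q}(N)$ with $\pi_{q}(A)$ and generators by classes of the form $\partial x'$ in the image of $\pi_{p+1}(M, A) \to \pi_{p}(A)$.
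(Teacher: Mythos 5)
Your decomposition of the lemma is correct and matches the paper's own implicit treatment: the paper states the lemma as a summary of the propositions proven earlier in the section, with no separate proof, simply pointing to the closed-manifold analogue in [Lemma 3.9, P 15a] for comparison. So items (ii)--(v) are indeed just restatements, and the only content is (i). Your handling of the subcase $q < p$ is correct (trivially), and the subcase $q > p$ is also correct, though the connectivity hypothesis plays no role there: general position alone suffices, since $f(S^p)$ and $f'(S^p)$ both have dimension $p$ while $\dim A = p + q > 2p$, so after a small perturbation of $f'$ (which represents the same class $x'$ and hence leaves $\lambda^{\partial}_{p,q}$ unchanged) they are disjoint, and then $g(S^q) \subset f'(S^p)$ misses $f(S^p)$.

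The genuine gap is your treatment of the borderline case $q = p$. The normal-section idea as written is aimed at the wrong sphere, and even corrected to a section of $\nu(f'(S^p), A)$, pushing $g$ off of $f'(S^p)$ gives no control over its intersection with $f(S^p)$, which is what is actually required. The correct argument is more elementary. Since $\phi: S^p \to S^p$ has degree $z$ and $\lambda^{\partial}_{p,p}$ depends only on the Hurewicz images, one has $\lambda^{\partial}_{p,p}(x, \tau^{\partial}_{p,p}(x', z)) = z \cdot \lambda^{\partial}_{p,p}(x, \partial x')$, where $\partial x' \in \pi_p(A)$ is the image of $x'$ under the boundary map. Now $\lambda^{\partial}_{p,p}(x, \partial x')$ is the signed count of $f(S^p) \pitchfork f'(S^p)$ in $A$. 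Make the embedded disks $f(D^{p+1})$ and $f'(D^{p+1})$ transverse in $(M, A)$; their intersection is then a compact $1$-manifold, and because both disks meet $\partial M$ only in $A$ its boundary is exactly $f(S^p) \cap f'(S^p)$. The signed boundary count of a compact oriented $1$-manifold is zero, so $\lambda^{\partial}_{p,p}(x, \partial x') = 0$. Alternatively, one may invoke Theorem \ref{theorem: relative higher Whitney trick}, whose proof in the appendix does not depend on this lemma, to isotope $f$ so that $f(D^{p+1})$ and $f'(D^{p+1})$ are disjoint, making the intersection number visibly zero.
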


\subsection{Modifying Intersections} \label{subsection: modifying intersections}
We will ultimately need to use $\lambda^{\partial}_{p, q}$ and $\mu_{q}$ to study intersections of embedded submanifolds.
Let $(M, A)$, $p$ and $q$ be exactly as in the previous section.
For embeddings 
$f: (D^{p+1}, S^{p}) \longrightarrow (M, A)$ and $g: S^{q} \longrightarrow A,$
the integer $\lambda_{p,q}([f],\; [g])$ is equal to the signed intersection number of $f(S^{p})$ and $g(S^{q})$ in $A$. 
Since $A$ is simply-connected, by application of the \textit{Whitney trick} \cite[Theorem 6.6]{M 65}, one can deform $f$ through
a smooth isotopy to a new embedding $f': (D^{p+1}, S^{p}) \rightarrow (M, A)$ such that $f'(S^{p})$ and $g(S^{p})$ intersect transversally in $A$ at exactly $|\lambda^{\partial}_{p,q}(x,y)|$-many points, all with positive orientation.
We now consider embeddings $f, g: S^{q} \rightarrow A$ whose images intersect transversally. 
The intersection $f(S^{q})\cap g(S^{q})$ is generically a $(q - p)$-dimensional closed manifold. 
We will need a higher dimensional analogue of the Whitney trick that applies to the intersection of such embeddings.
The first proposition below follows from \cite{We 67} and \cite{HQ 74}.
\begin{theorem} {\rm (Wells, \cite{We 67})} \label{prop: Generalized Whitney Trick} 
Let
$f, g: S^{q} \longrightarrow A$ be
embeddings. Then there
exists an isotopy $\Psi_{t}: S^{q} \rightarrow A$ with
 $\Psi_{0} = g$
and $\Psi_{1}(S^{q})\cap f(S^{q}) = \emptyset$, if and only if $\mu_{q}([g], [f]) = 0$. 
\end{theorem}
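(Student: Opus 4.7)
The ``only if'' direction is immediate from the definition of $\mu_q$: the pairing depends only on the homotopy classes of $f$ and $g$, and for transverse embeddings it is read off from the intersection locus together with its normal data. An empty intersection therefore forces $\mu_q([g],[f]) = 0$.

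For the ``if'' direction the plan is to carry out a generalized Whitney trick. If $q < p$, then by the dimension count $2q < p + q = \dim A$, general position in $A$ already suffices to disjoin $f(S^{q})$ and $g(S^{q})$ by a small isotopy, so we may assume $q \geq p$. After a small perturbation we may take $f \pitchfork g$, so that
\[
\Sigma \; := \; g^{-1}\bigl(f(S^{q})\bigr) \; \subset \; S^{q}
\]
is a closed smooth submanifold of codimension $p$. Its normal bundle in $S^{q}$ is canonically identified with the pullback of $\nu_{f(S^{q})/A}$ along $g$, and the construction of $\mu_{q}$ from \cite[Construction 3.1]{P 15a} is precisely the Pontryagin--Thom class of $\Sigma$ together with this normal structure, regarded as an element of $\pi_{q}(S^{p})$.

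Under the hypothesis $\mu_{q}([g],[f]) = 0$, this framed bordism class vanishes, so there exists a compact $(q-p+1)$-dimensional manifold $W$ with $\partial W = \Sigma$ together with an extension of the normal data over $W$. I would next embed $W$ in $A$ rel $\partial$ so that $W \setminus \partial W$ is disjoint from $f(S^{q})\cup g(S^{q})$ and so that the normal bundle of $W$ in $A$ is compatible with the chosen extension along $\Sigma$. The inequality $|q-p| < \min\{p,q\}-2$ implies $2(q-p+1)+1 \leq p+q = \dim A$, so a generic embedding exists provided the complement $A \setminus (f(S^{q})\cup g(S^{q}))$ is sufficiently connected; the latter is guaranteed by $|q-p| < \kappa(A) - 1$ from (\ref{equation: inequalities mfd with boundary}) together with a standard transversality/general position argument. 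A tubular neighborhood of the embedded $W$, intersected with $g(S^{q})$, produces a parametrized family of higher-dimensional Whitney disks along which $g$ may be ambient-isotoped so as to slide $g(\Sigma)$ across $f(S^{q})$, eliminating the intersection in a single step and yielding the desired $\Psi_{t}$.

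The chief obstacle is the construction of the embedded null-cobordism $W \hookrightarrow A$ rel $\partial$ with the correct normal structure: it is here that both the high-codimension inequality $|q-p| < \min\{p,q\}-2$ and the connectivity hypothesis (\ref{equation: inequalities mfd with boundary}) are indispensable, and it is exactly what lifts the classical Whitney trick (the case $q-p=0$, where $W$ is a disk and $\Sigma$ is a pair of opposite-sign intersection points) to the higher-dimensional setting required by the theorem.
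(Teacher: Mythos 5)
The paper does not prove this statement; it is cited directly from Wells \cite{We 67} (with Hatcher--Quinn \cite{HQ 74} mentioned alongside), so there is no in-paper proof to compare against. I will therefore evaluate your argument on its own terms.

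The ``only if'' direction, the reduction to $q\geq p$, and the dimension counts for general position are all fine, and the identification of $\mu_q$ with a (twisted) Pontryagin--Thom class in the stable range is a reasonable working interpretation of \cite[Construction 3.1]{P 15a}, modulo the point that $\Sigma$ inherits a framing only after a trivialization of $g^*\nu_{f(S^q)/A}|_\Sigma$ is chosen, which is not automatic since $\nu_{f(S^q)/A}$ is classified by $\alpha_q([f])\in\pi_{q-1}(SO_p)$ and need not be trivial. That subtlety is not fatal, but the final geometric step is, I think, genuinely wrong as written.

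The object you produce is a compact $(q-p+1)$-manifold $W$ with $\partial W=\Sigma$, embedded in $A$ with interior disjoint from $f(S^q)\cup g(S^q)$. Specialize to $q=p$: then $\Sigma$ is a pair of opposite-sign intersection points and $W$ is an \emph{arc} in $A$ joining them with interior avoiding both spheres. Such an arc carries no information that lets you isotope $g(S^q)$ off $f(S^q)$ --- the classical Whitney trick requires a $2$-\emph{disk} whose boundary decomposes into one arc lying in $f(S^q)$ and one arc lying in $g(S^q)$, and it is the interior of that disk that furnishes the track for the ambient isotopy. In general the ``Whitney manifold'' must have dimension $q-p+2$, not $q-p+1$, with $\partial W = W_f\cup_\Sigma W_g$ where $W_f\subset f(S^q)$ and $W_g\subset g(S^q)$ are $(q-p+1)$-dimensional pieces. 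A null-cobordism of $\Sigma$ floating in the ambient with its interior disjoint from both spheres is one dimension short, and the phrase ``a tubular neighborhood of $W$ intersected with $g(S^q)$ produces a parametrized family of higher-dimensional Whitney disks'' does not produce the missing dimension: that intersection is merely a neighborhood of $\Sigma$ inside $g(S^q)$ and gives no track along which to push. To carry out a geometric argument of this type you would need to upgrade the abstract framed null-bordism of $\Sigma$ to a relative object whose boundary separates into sheets lying in $f(S^q)$ and $g(S^q)$, which is a genuinely stronger construction. Alternatively --- and this is closer in spirit to what the paper does for Theorem~\ref{theorem: boundary disjunction} via Hatcher--Quinn --- one can argue homotopy-theoretically: the Thom--Pontryagin collapse onto the Thom space of $\nu_{f(S^q)/A}$ identifies $\mu_q([g],[f])$ as the obstruction to compressing $g$ (as a map) into $A\setminus f(S^q)$; once $g$ is \emph{homotopic} to a map missing $f(S^q)$, the connectivity and codimension hypotheses from (\ref{equation: inequalities mfd with boundary}) let one invoke Haefliger--Hudson unknotting to upgrade the homotopy to an isotopy of embeddings. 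Either of these routes fills the gap; as written, the proposal does not.
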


We will also need a technique for manipulating the intersections of embeddings
$(D^{p+1}, \partial D^{p+1}) \longrightarrow (M, A).$
The following theorem is a special case of Theorem \ref{theorem: boundary disjunction}. 
\begin{theorem} \label{theorem: relative higher Whitney trick}
Let $f, g: (D^{p+1}, S^{p}) \longrightarrow (M, A)$ be embeddings.
Then there is an isotopy of embeddings $\Psi_{t}: (D^{p+1}, S^{p}) \longrightarrow (M, A)$ such that $\Psi_{0} = f$ and $\Psi_{1}(D^{p+1})\cap g(D^{p+1}) = \emptyset$.
\end{theorem}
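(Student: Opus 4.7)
Since the statement is explicitly flagged as a special case of Theorem \ref{theorem: boundary disjunction} from the appendix, the natural strategy is to deduce it directly from that general disjunction result. The plan is twofold: verify that the dimension and connectivity hypotheses of the appendix theorem are met in the present situation, and then check that the algebraic obstruction to disjunction vanishes automatically because both submanifolds in play are disks.

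For the first task, I would put $f$ and $g$ in general position so that $f(D^{p+1})\cap g(D^{p+1})$ is a compact $(p+1-q)$-dimensional submanifold of $M$ whose boundary is the transverse intersection $f(S^{p})\cap g(S^{p})$ inside $A$, a compact $(p-q)$-dimensional submanifold. The inequality $|q-p|<\min\{p,q\}-2$ places both intersection dimensions in the metastable range where Whitney-type moves are available in codimension at least three, while the inequality $|q-p|<\min\{\kappa(A),\kappa(M,A)\}-1$ provides enough connectivity to find Whitney disks both inside $A$ for the boundary intersection and inside $M$ rel $A$ for the interior intersection. These are precisely the hypotheses that Theorem \ref{theorem: boundary disjunction} demands.

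For the second task, I would observe that the framed bordism obstruction which the appendix theorem extracts from the intersection locus vanishes tautologically when both submanifolds are disks: the intersection carries a canonical null-bordism obtained by coning it off inside $f(D^{p+1})$, and the normal framing extends over the cone because the disk's normal bundle in $M$ is trivial. Feeding this into Theorem \ref{theorem: boundary disjunction} produces the desired isotopy $\Psi_{t}$ with $\Psi_{0}=f$ and $\Psi_{1}(D^{p+1})\cap g(D^{p+1})=\emptyset$. The genuine work, and the obstacle I expect to be nontrivial, is not contained in this corollary but in the appendix theorem itself: pushing the Wells and Hatcher--Quinn disjunction framework through the relative setting, where Whitney disks must be chosen simultaneously in $A$ and in $M$ with matched boundary behavior, and where the connectivity hypotheses must be balanced against the metastable dimension constraints. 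Once that machinery is in place, the disk case treated here falls out essentially for free.
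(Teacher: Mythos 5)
Your first step is correct and is exactly what the paper does: Theorem \ref{theorem: relative higher Whitney trick} is a direct instance of Theorem \ref{theorem: boundary disjunction} with $P = Q = D^{p+1}$ inside the pair $(M, A)$, and one merely checks the metastable dimension inequalities and the connectivity hypotheses, which are immediate since disks are contractible, the pair $(D^{p+1}, S^p)$ is $p$-connected, and the hypotheses of (\ref{equation: inequalities mfd with boundary}) supply the needed connectivity of $A$ and $(M, A)$.

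However, the ``second task'' you describe is both incorrect and unnecessary, and it indicates a misreading of what Theorem \ref{theorem: boundary disjunction} asserts. That theorem has \emph{no} hypothesis about the vanishing of a bordism obstruction; its conclusion holds unconditionally given the dimension and connectivity assumptions. Within its proof, the obstruction $\alpha_t(f,g,M)$ is dealt with not by showing it vanishes, but by modifying $f$ up to an isotopy that \emph{moves the boundary} -- specifically by boundary-connected-summing $f$ with an auxiliary embedded disk manufactured via Proposition \ref{proposition: intersection creation} and Lemma \ref{lemma: hopf map trick} to carry the opposite obstruction class, after which Hatcher--Quinn applies. Your claim that the obstruction ``vanishes tautologically'' by coning off inside $f(D^{p+1})$ cannot be right: if it were, then Theorem \ref{theorem: hatcher quinn main theorem} would already give a disjunction isotopy \emph{rel} $S^p$, which directly contradicts the paper's explicit Remark following the statement, namely that the theorem would be false if $\Psi_t|_{S^p}$ were required to be constant. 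The coning argument fails because the relevant framing on $f\pitchfork g$ is not the one induced by the normal bundle of the disk alone; it involves the twist by $\hat{s}^*(TM)$ and $\pi_Q^*\nu_Q$, and there is no reason this framed manifold is null-bordant merely because it sits inside a disk. In short: drop the obstruction discussion entirely -- the theorem you are citing was designed precisely so that no such vanishing needs to be checked.
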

\begin{remark}
We emphasize that in Theorem \ref{theorem: relative higher Whitney trick} the embeddings $f$ and $g$ are completely arbitrary; the theorem holds for any two such embeddings so long as $(M, A)$, $p$, $q$ satisfy (\ref{equation: inequalities mfd with boundary}). 
Furthermore, we emphasize that the restriction $\Psi_{t}|_{S^{p}}$ is not in general the constant isotopy.
The theorem would not be true if we insisted on keeping the restriction $\Psi_{t}|_{S^{p}}$ fixed for all $t$. 
\end{remark}

We will need to apply the above theorems inductively. 
For the statement of the next two results, 
let $(M, A)$ and $p$ and $q$ be exactly as in the statements of the previous two theorems. 
The following corollary is proven in exactly the same way as \cite[Corollary 7.5]{P 15b} using Theorem \ref{prop: Generalized Whitney Trick}. 
\begin{corollary} \label{corollary: inductive whitney trick}
Let 
$
f_{0}, \dots, f_{m}: S^{q} \longrightarrow A
$
be a collection of embeddings such that: 
\begin{enumerate}
\item[(i)] $\mu_{q}(f_{0}, f_{i}) = 0$ for all $i = 1, \dots, m$;
\item[(ii)] the collection of embeddings $f_{1}, \dots, f_{m}$ is pairwise transverse.
\end{enumerate}
Then there exists an isotopy $\Psi_{t}: S^{q} \longrightarrow A$ with $t \in [0,1]$ and $\Psi_{0} = f_{0}$, such that $\Psi_{1}(S^{q})\cap f_{i}(S^{q}) = \emptyset$ for $i = 1, \dots, m$.
\end{corollary}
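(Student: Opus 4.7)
The plan is to proceed by induction on $m$. The base case $m=1$ is immediate: the hypothesis $\mu_q(f_0, f_1) = 0$ allows us to invoke Wells' theorem (Theorem \ref{prop: Generalized Whitney Trick}) directly, producing an isotopy $\Psi_t$ from $f_0$ with $\Psi_1(S^q) \cap f_1(S^q) = \emptyset$.

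For the inductive step, assume the corollary holds for any collection of $m$ embeddings satisfying (i) and (ii). Given $f_0, f_1, \dots, f_m$ as in the hypotheses, first apply the inductive hypothesis to the sub-collection $f_0, f_1, \dots, f_{m-1}$ to produce an isotopy $\Phi_t: S^q \longrightarrow A$ with $\Phi_0 = f_0$ and $\Phi_1(S^q) \cap f_i(S^q) = \emptyset$ for $i = 1, \dots, m-1$. By the isotopy extension theorem we may realize $\Phi_t$ as the restriction of an ambient isotopy of $A$. Set $\widetilde{f}_0 := \Phi_1$. Since $\mu_q$ is an ambient-isotopy invariant of embeddings, $\mu_q(\widetilde{f}_0,\, f_m) = \mu_q(f_0, f_m) = 0$. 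A second application of Theorem \ref{prop: Generalized Whitney Trick} produces an isotopy $\Psi'_t$ with $\Psi'_0 = \widetilde{f}_0$ and $\Psi'_1(S^q) \cap f_m(S^q) = \emptyset$. The concatenation of $\Phi_t$ with $\Psi'_t$ is then a candidate for the desired isotopy $\Psi_t$.

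The crux of the proof is ensuring that the second isotopy $\Psi'_t$ does not reintroduce intersections between $\widetilde{f}_0(S^q)$ and the previously disjoined submanifolds $f_1(S^q), \dots, f_{m-1}(S^q)$. The idea is to realize $\Psi'_t$ as an ambient isotopy supported in an arbitrarily small open neighborhood of $\widetilde{f}_0(S^q) \cup f_m(S^q) \cup W$, where $W$ is the union of the framed bordism traces used inside the proof of Theorem \ref{prop: Generalized Whitney Trick} to cancel intersection points of $\widetilde{f}_0(S^q) \cap f_m(S^q)$. The pairwise transversality assumption on $f_1, \dots, f_m$, together with the dimension and connectivity inequalities (\ref{equation: inequalities mfd with boundary}), allows us to perturb $W$ by a general-position argument so that it is disjoint from each $f_i(S^q)$ with $1 \leq i \leq m-1$. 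Once $W$ avoids these submanifolds, the small-neighborhood support of $\Psi'_t$ can also be chosen to avoid them, and the concatenation $\Psi_t$ has the required disjointness properties.

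The main obstacle, and the reason pairwise transversality of $f_1, \dots, f_m$ appears in the hypotheses, is exactly this general-position step: one must verify that the Wells/Hatcher--Quinn bordisms realizing the vanishing of $\mu_q(\widetilde{f}_0, f_m)$ can be chosen in $A$ so as to miss a prescribed collection of pairwise-transverse $q$-submanifolds. This is where the inequalities $|q-p| < \min\{p,q\} - 2$ and $|q-p| < \min\{\kappa(A), \kappa(M,A)\} - 1$ are used in an essential way, as in the analogous argument in \cite[Corollary 7.5]{P 15b}.
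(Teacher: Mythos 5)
Your inductive argument is essentially the argument the paper is pointing to: the paper gives no proof of its own but cites the analogous [P 15b, Corollary 7.5], and your structure (apply Wells' theorem to disjoin $f_0$ from one more $f_i$ at a time, using the dimension inequalities to put the Whitney membranes/traces in general position so that the supporting ambient isotopy avoids the previously cleared $f_1,\dots,f_{m-1}$) is exactly the standard inductive scheme that reference carries out. One small imprecision is your phrase ``Wells/Hatcher--Quinn bordisms'': Wells (1967) predates Hatcher--Quinn and works with Haefliger-style Whitney membranes rather than bordism invariants, but this does not affect the substance of the argument, since in either formulation the trace has dimension $q-p+1$ and the inequality $|q-p|<\min\{p,q\}-2$ forces $q<2p-1$, which is precisely what makes the general-position disjunction from the $q$-dimensional $f_i(S^q)$ (and of $\partial W$ from $f_m(S^q)\cap f_i(S^q)$ inside $f_m(S^q)$, where the transversality hypothesis (ii) is used) go through.
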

The next corollary is proven in the same way as \cite[Proposition 6.8]{BP 15} using Theorem \ref{theorem: relative higher Whitney trick}. 
\begin{corollary} \label{corollary: inductive relative whitney trick}
Let $g_{0}, \dots, g_{k}: (D^{p+1}, \partial D^{p+1}) \longrightarrow (M, A)$ be a collection of embeddings such that the collection of submanifolds, 
$g_{1}(D^{p+1}), \dots, g_{k}(D^{p+1}) \subset M,$ 
is pairwise transverse. 
Then there exists an isotopy 
$$\Psi_{t}: (D^{p+1}, \partial D^{p+1}) \longrightarrow (M, A), \quad t \in [0,1],$$ 
with $\Psi_{0} = g_{0}$, such that $\Psi_{1}(D^{p+1})\cap g_{i}(D^{p+1}) = \emptyset$ for $i = 1, \dots, k$.
\end{corollary}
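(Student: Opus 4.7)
The plan is to induct on $k$, with the base case $k = 1$ being a direct application of Theorem \ref{theorem: relative higher Whitney trick}. By a small generic perturbation I may first assume that $g_0$ is transverse to each of $g_1, \ldots, g_k$.

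For the inductive step, I would first apply Theorem \ref{theorem: relative higher Whitney trick} to the pair $(g_0, g_k)$, obtaining an ambient isotopy $H_t : (M, A) \to (M, A)$ with $H_0 = \mathrm{Id}_M$ and $H_1(g_0(D^{p+1})) \cap g_k(D^{p+1}) = \emptyset$. Setting $g_0' := H_1 \circ g_0$, the remaining task is to isotope $g_0'$ so as to disjoin it from $g_1(D^{p+1}), \ldots, g_{k-1}(D^{p+1})$, without re-introducing intersections with $g_k(D^{p+1})$. To accomplish this, I would work inside the open submanifold $M' := M \setminus \nu\bigl(g_k(D^{p+1})\bigr)$, where $\nu(g_k(D^{p+1}))$ is an open tubular neighborhood chosen small enough that $g_0'(D^{p+1}) \subset M'$. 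Since the $g_j(D^{p+1})$ are pairwise transverse, the intersections $g_j(D^{p+1}) \cap M'$ for $j < k$ are smooth submanifolds (with corners) of $M'$, and an appropriate version of the inductive hypothesis applies inside $M'$. Iterating, one successively disjoins $g_0'$ from $g_{k-1}, g_{k-2}, \ldots, g_1$ while remaining disjoint from $g_k$.

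The principal obstacle is verifying that the connectivity hypotheses (\ref{equation: inequalities mfd with boundary}) continue to hold inside $M'$, and that Theorem \ref{theorem: relative higher Whitney trick} remains applicable when the ``target'' is a submanifold-with-corners rather than a closed embedded disk. Since $g_k(D^{p+1})$ has codimension $q$ in $M$, removing an open tubular neighborhood of it preserves connectivity up to roughly degree $q - 2$; combined with $|q - p| < \min\{p, q\} - 2$, this is enough for the induction to proceed. The overall argument then follows the template of the proof of \cite[Proposition 6.8]{BP 15}.
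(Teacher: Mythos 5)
Your high-level plan (induct on $k$, disjoin from $g_k$ via Theorem \ref{theorem: relative higher Whitney trick}, then recurse inside the complement of a tubular neighborhood) is a reasonable strategy, and you are right that the argument should be modeled on \cite[Proposition 6.8]{BP 15}. But the two obstacles you flag at the end are genuine gaps, not technicalities, and your sketch does not resolve either of them.

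The more serious one is the second. Once you pass to $M' = M \setminus \nu\bigl(g_k(D^{p+1})\bigr)$, the restrictions of $g_1, \ldots, g_{k-1}$ to $M'$ are \emph{no longer} embeddings of the disk pair $(D^{p+1}, S^p)$: since the $g_j$ may meet $g_k(D^{p+1})$, each domain loses a tubular neighborhood of $g_j^{-1}(g_k(D^{p+1}))$, and the images become submanifolds with corners (part of the boundary lies in $A'$, part in $\partial\nu$). So the inductive hypothesis, which is a statement about disk embeddings, simply does not apply, and neither does Theorem \ref{theorem: relative higher Whitney trick}. To push the complement strategy through you would have to appeal to the general disjunction result Theorem \ref{theorem: boundary disjunction}, which does allow an arbitrary pair $(Q, \partial Q)$ as the target, and then verify its hypotheses (ii)--(iii) for the punctured disks: that they are $(p - q + 1)$-connected, that the corner structure can be smoothed compatibly, and so on. None of this is automatic, and it changes what the ``inductive hypothesis'' even is. The first obstacle, the connectivity of $(M', A')$, is also not disposed of by the ``roughly degree $q-2$'' estimate: the recursion removes tubular neighborhoods one at a time, so the inequalities (\ref{equation: inequalities mfd with boundary}) must be reverified for $\kappa(A')$ \emph{and} $\kappa(M', A')$ at every stage, not just after the first excision, and a precise general-position computation (not a heuristic) is needed. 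As written, the proposal identifies the right difficulties but leaves the proof open at exactly those points.
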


\section{Wall Forms} \label{section: Algebra}
We now formalize the algebraic structure studied in Section \ref{Section: The Algebraic Invariants}.
Much of this section is a recollection of definitions and results from \cite[Section 5]{P 15a}. 
We begin by introducing the category underlying our main construction. 
\begin{definition}
Fix a finitely generated Abelian group $H$. An object $\mb{M}$ in the category $\Ab^{2}_{H}$ is defined to be a pair of abelian groups $(\mb{M}_{-}, \mb{M}_{+})$ equipped with a bilinear map, 
$\tau: \mb{M}_{-}\otimes H \longrightarrow \mb{M}_{+}.$
A morphism $f: \mb{M} \rightarrow \mb{N}$ of $H$-pairs 
 is defined to be a pair of group homomorphisms 
 $f_{-}: \mb{M}_{-} \longrightarrow \mb{N}_{-}$ and $f_{+}: \mb{M}_{+} \longrightarrow \mb{N}_{+}$ 
 that satisfy,
 $f_{+}\circ\tau_{\mb{M}} \; = \; \tau_{\mb{N}}\circ(f_{-}\otimes\textstyle{\Id_{\mb{M}}}).$
 We will refer to morphisms in $\Ab^{2}_{H}$ as \textit{$H$-maps}.
\end{definition}

We build on the above definition as follows. 
Fix once and for all a finitely generated abelian group $H$. 
All of our constructions will take place in the category $\Ab^{2}_{H}$. 
Let $\mb{G}$ be an abelian $H$-pair, equipped with homomorphisms
$\partial: H \longrightarrow \mb{G}_{+}$ and $\pi: \mb{G}_{+} \longrightarrow H.$
Then, let $\epsilon = \pm 1$.
We call such a $4$-tuple $(\mb{G}, \partial, \pi, \epsilon)$ a \textit{form-parameter}. 
Fix a form-parameter $(\mb{G}, \partial, \pi, \epsilon)$ 
and let $\mb{M}$ be a finitely generated $H$-pair.
Consider the following data:
\begin{itemize} 
\item A bilinear map, $\lambda: \mb{M}_{-}\otimes \mb{M}_{+} \longrightarrow \Z$.
\item An $\epsilon$-symmetric bilinear form, $\mu: \mb{M}_{+}\otimes\mb{M}_{+} \longrightarrow H$.
\item Functions, $\alpha_{\pm}: \mb{M}_{\pm} \longrightarrow \mb{G}_{\pm}$.
\end{itemize}
Our main definition is given below.
\begin{definition} \label{defn: Wall-form} The $5$-tuple $(\mb{M}, \lambda, \mu, \alpha)$ is said to be a \textit{Wall form} with parameter $(\mb{G}, \partial, \pi, \epsilon)$ if the following conditions are satisfied
for all $x, \; x' \in \mb{M}_{-}$, \; $y, \; y' \in \mb{M}_{+}$, and $h \in H$:
\begin{enumerate} \itemsep.1cm
\item[(i)] $\lambda(x, \tau_{\mb{M}}(x', h)) = 0$, 
\item[(ii)] 
$\mu(\tau_{\mb{M}}(x, h), y) \; = \; \lambda(x, y)\cdot h, \; $
\item[(iii)]
$\alpha_{-}(x + x') = \alpha_{-}(x) + \alpha_{-}(x'),$
\item[(iv)] 
$\alpha_{+}(y + y') = \alpha_{+}(y) + \alpha_{+}(y') + \partial(\mu(y, y')), $
\item[(v)]
$\mu(y, y) \; = \; \pi(\alpha_{+}(y)),$
\item[(vi)] 
$\alpha_{+}(\tau_{\mb{M}}(x, h)) \; = \; \tau_{\mb{G}}(\alpha_{-}(x), h).$
\end{enumerate}
The Wall form $(\mb{M}, \lambda, \mu, \alpha)$ is said to be \textit{reduced} if $\alpha_{-}$ is identically zero. 
In the case of a reduced Wall form, condition (vi) then translates to $\alpha_{+}(\tau_{\mb{M}}(x, h)) = 0$ for all $x \in \mb{M}_{-}$ and $h \in H$. 
A \textit{morphism} between Wall forms (with the same form-parameter) is an  $H$-map $f: \mb{M} \longrightarrow \mb{N}$ 
 that preserves all values of $\lambda$, $\mu$, and $\alpha$.  
\end{definition}

We will often denote a Wall form by its underlying $H$-pair, i.e. $\mb{M} := (\mb{M}, \lambda, \mu, \alpha)$.
We will need notation for orthogonal complements. 
\begin{definition}
Let $\mb{N} \leq \mb{M}$ be Wall forms. 
We define a new sub-Wall form $\mb{N}^{\perp} \leq \mb{M}$ by setting:
$$
\begin{aligned}
\mb{N}^{\perp}_{-} \; &:= \; \{\; x \in \mb{M}_{-} \; | \; \lambda(x, w) = 0 \; \text{for all $w \in \mb{N}_{+}$} \},\\
\mb{N}^{\perp}_{+} \; &:= \: \{\; y \in \mb{M}_{+} \; | \; \lambda(v, y) = 0 \; \text{and} \; \mu(y, w) = 0 \; \text{for all $v \in \mb{N}_{-}$,  $w \in \mb{N}_{+}$}\}.
\end{aligned}
$$
It can be easily checked that $\tau(\mb{N}^{\perp}_{-}\otimes H) \leq \mb{N}^{\perp}_{+}$ and thus $\mb{N}^{\perp}$ actually is a sub-$H$-pair of $\mb{M}$. 
We call $\mb{N}^{\perp}$ the \textit{orthogonal complement} to $\mb{N}$ in $\mb{M}$. 
Two sub-Wall forms $\mb{N}, \mb{N}' \leq \mb{M}$ are said to be \textit{orthogonal} if $\mb{N}\cap\mb{N}' = \mb{0}$, $\mb{N} \leq (\mb{N}')^{\perp}$  and $\mb{N}' \leq \mb{N}^{\perp}$.
\end{definition}

We will need to use the simplicial complex from \cite[Definition 4.13]{P 15a}.
For this we must recall the definition of the \textit{standard Wall form}.
This requires a few steps. 
Fix a finitely generated Abelian group $H$. 
We define an $H$-pair $\mb{W} \in \Ob(\Ab_{H}^{2})$ by setting, 
$\mb{W}_{-} = \Z$ and $\mb{W}_{+} = \Z\oplus H.$
The map $\tau: \mb{W}_{-}\otimes H \longrightarrow \mb{W}_{+}$ is defined by the formula,
$$
\tau(t\otimes h) \; = \; (0, \; t\cdot h) \; \in \; \Z\oplus H = \mb{W}_{+}.
$$
For $g \in \N$, we denote by $\mb{W}^{g}$ the $g$-fold direct-sum $\mb{W}^{\oplus g}$. 
We let $\mb{W}$ denote the $H$-pair $\mb{W}^{1}$, and $\mb{W}^{0}$ is understood to be the trivial $H$-pair. 
Fix elements 
$a \in \mb{W}_{-}$ and $b \in \mb{W}_{+}$
which correspond to $1 \in \Z$ and $(1, 0) \in \Z\oplus H$ respectively. 
For $g \in \N$, we denote by 
$a_{i} \in \mb{W}^{g}_{-}$ and $b_{i} \in \mb{W}^{g}_{+}$ for $i = 1, \dots, g,$
the elements  that correspond to the elements $a$ and $b$ coming from the $i$th direct-summand of $\mb{W}$ in $\mb{W}^{g}$.
Now fix a form-parameter $(\mb{G}, \partial, \pi, \epsilon)$. 
We endow $\mb{W}^{g}$ with the structure of a Wall form with parameter $(\mb{G}, \partial, \pi, \epsilon)$ by setting:
\begin{equation} \label{eq: standard Wall-form maps}
\lambda(a_{i}, b_{j}) = \delta_{i,j}, \quad \mu(b_{i}, b_{j}) = 0, \quad \alpha_{+}(b_{i}) = 0, \quad \alpha_{-}(a_{i}) = 0 \quad \text{for $i, j = 1, \dots g$.}
\end{equation}
These values together with the conditions imposed from Definition \ref{defn: Wall-form} determine the maps $\lambda, \mu,$ and $\alpha$ completely.
The vanishing of $\alpha_{-}(a_{i})$ for all $i$, implies that $\alpha_{-}$ is identically zero,
thus  $(\mb{W}^{g}, \lambda, \mu, \alpha)$
is a reduced Wall form with parameter $(\mb{G}, \partial, \pi, \epsilon)$ (the fact that $\alpha_{+}(b_{i}) = 0$ for all $i$ does not imply that $\alpha_{+} = 0$ however).
We call this the \textit{standard Wall form of rank $g$} with parameter $(\mb{G}, \pi, \partial, \epsilon)$.

We will use standard Wall form $\mb{W}$ to probe other Wall forms. 
The simplicial complex defined in the next Definition is an algebraic analogue of the simplicial complex defined in Section \ref{section: simplicial complexes}. 
\begin{definition} \label{defn: algebraic simplicial complex} For a Wall form $\mb{M}$ let $L(\mb{M})$ be the simplicial complex whose vertices are given by morphisms $f: \mb{W} \rightarrow \mb{M}$. A set of vertices $\{f_{0}, \dots, f_{l}\}$ is an $l$-simplex if the sub Wall forms, $f_{0}(\mb{W}), \dots, f_{l}(\mb{W}) \leq \mb{M}$ are pairwise orthogonal. 
\end{definition}
To state the main theorem regarding the simplicial complex $L(\mb{M})$ we need to introduce a notion of rank for a Wall form. 
The definition below is analogous to the rank $r_{p,q}(\--)$ associated to a manifold triad $(M; \partial_{0}M, \partial_{1}M)$ defined back in Section \ref{section: Preliminary Constructions}. 
\begin{definition} \label{defn: rank of wall form}
For a Wall form $\mb{M}$, the \text{rank} of $\mb{M}$ is defined to be the non-negative integer,
$$r(\mb{M}) := \max\{ \; g \in \N \; | \; \text{there exists a morphism $\mb{W}^{g} \rightarrow \mb{M}$}\}.$$
 \end{definition}
One of the key technical results proven in \cite{P 15a} (see \cite[Theorem 5.1]{P 15a}) is the theorem stated below.
For the statement of this theorem, let $d$ denote the generating set rank $d(H)$, which recall is the quantity, 
$
d(H) \; = \; \min\{ \; k \in \N \; | \; \text{there exists an epimorphism $\Z^{\oplus k} \longrightarrow H$} \; \}.
$
\begin{theorem} \label{thm: high connectivity} Suppose that
$r(\mb{M}) \geq g$. Then $lCM(L(\mb{M})) \geq
\frac{1}{2}(g -1-d)$ and the geometric realization
$|L(\mb{M})|$ is $\frac{1}{2}(g - 4 - d)$ connected.
\end{theorem}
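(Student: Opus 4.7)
The plan is to prove both statements simultaneously by induction on the rank $g$. The base cases, where the asserted connectivity is negative, are vacuous. For the inductive step, the crucial geometric observation is that the link of a $p$-simplex $\sigma = \{f_0, \ldots, f_p\}$ in $L(\mb{M})$ is canonically isomorphic to $L(\mb{N}_\sigma^\perp)$, where $\mb{N}_\sigma := \sum_{i=0}^{p} f_i(\mb{W})$. Orthogonality of the $f_i(\mb{W})$ ensures that $\mb{N}_\sigma$ is a sub-Wall-form isomorphic to $\mb{W}^{p+1}$. The central algebraic input, which I would isolate as a separate lemma, is a Witt-type cancellation statement asserting that $r(\mb{N}^\perp) \geq r(\mb{M}) - r(\mb{N})$ whenever $\mb{N} \leq \mb{M}$ is a sub-Wall-form isomorphic to a standard $\mb{W}^k$; in particular $r(\mb{N}_\sigma^\perp) \geq g - (p+1)$.

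Granting this cancellation lemma, the local Cohen-Macaulay bound follows directly from the inductive hypothesis: applied to $\mb{N}_\sigma^\perp$, it yields that $|\mathrm{link}(\sigma)| \cong |L(\mb{N}_\sigma^\perp)|$ is $\tfrac{1}{2}(g - p - 5 - d)$-connected, which exceeds the required bound $\tfrac{1}{2}(g - 1 - d) - p - 2$. For the global connectivity statement, I would follow a standard replacement-of-bad-simplices argument in the style of Hatcher-Vogtmann and Galatius-Randal-Williams. Given a simplicial approximation $\phi: \Sigma \to L(\mb{M})$ of a map $S^k \to |L(\mb{M})|$ with $k \leq \tfrac{1}{2}(g-4-d)$, one extends $\phi$ across $D^{k+1}$ by iteratively replacing ``bad'' subsimplices, at each stage using the cancellation lemma to produce a new vertex $f: \mb{W} \to \mb{M}$ orthogonal to a controlled sub-Wall-form spanned by the finitely many vertices encountered so far. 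The gap of $\tfrac{3}{2}$ between the local and global bounds accounts for the slack needed to run this replacement process.

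The main obstacle is the algebraic cancellation lemma itself. Unlike the classical Witt cancellation for symmetric or symplectic forms, a Wall form carries both an $H$-pair structure through $\tau$ and nonlinear data through $\alpha$, constrained by the six axioms of Definition \ref{defn: Wall-form}. Producing a morphism into $\mb{N}^\perp$ from a given morphism $\mb{W}^{g} \to \mb{M}$ requires iteratively modifying generators $x \in \mb{M}_{-}$ and $y \in \mb{M}_{+}$ by elements of the form $\tau_{\mb{M}}(x', h)$ so as to kill off the pairings $\lambda$ and $\mu$ against $\mb{N}$, while simultaneously preserving compatibility of the $\alpha_{\pm}$ via the form-parameter $(\mb{G}, \partial, \pi, \epsilon)$. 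The appearance of the generating set length $d = d(H)$ in the connectivity bounds traces back precisely to this step: one needs up to $d$ generators of $H$ to describe and cancel the $\tau$-contributions, which is why the rank bound drops by $d$ from the naive estimate.
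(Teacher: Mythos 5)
This theorem is not actually proven in the present paper; it is quoted from the author's earlier work, as the sentence immediately preceding the statement makes explicit: ``One of the key technical results proven in \cite{P 15a} (see \cite[Theorem 5.1]{P 15a}) is the theorem stated below.'' There is therefore no in-paper proof to compare your sketch against, and you would need to consult \cite{P 15a} for the real argument.

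Evaluated on its own terms, the shape of your argument (identify links of simplices with $L(-)$ of orthogonal complements, establish a Witt-type cancellation controlling the rank of those complements, then run a bad-simplex replacement argument for the global bound) is the right GRW-style strategy, and the identification $\lk(\sigma) \cong L(\mb{N}_\sigma^\perp)$ is correct. However, your account of where $d = d(H)$ enters is internally inconsistent, and that is exactly the load-bearing part. In your second paragraph you invoke the \emph{clean} cancellation $r(\mb{N}_\sigma^\perp) \geq g - (p+1)$; your arithmetic for the $lCM$ bound depends on this clean form, since the induction gives link-connectivity $\tfrac{1}{2}(g - p - 5 - d)$ against the target $\tfrac{1}{2}(g - 5 - d - 2p)$, and that comparison closes only because $d$ appears once on each side. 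In your third paragraph you then assert that ``the rank bound drops by $d$'' in the cancellation lemma itself. These cannot both hold: if $r(\mb{N}_\sigma^\perp) \geq g - (p+1) - d$, the inductive hypothesis yields only $\tfrac{1}{2}(g - p - 5 - 2d)$-connectivity, which falls short of the $lCM$ target by $\tfrac{1}{2}(d - p)$ whenever $p < d$, so the induction would not close for low-dimensional simplices. The more plausible resolution is that the cancellation \emph{is} clean for standard sub-forms $\mb{N} \cong \mb{W}^k$, and $d$ enters only in the global connectivity step, where the sub-$H$-pair spanned by the vertices encountered near a bad simplex is generally not standard and one must spend up to $d$ additional units of rank to neutralize its $H$-components before producing the coning vertex. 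As written you have asserted both versions simultaneously, which means the one lemma doing all the work has not actually been pinned down; until it is, the sketch is a plan rather than a proof.
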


We now show how to use the constructions from Section \ref{Section: The Algebraic Invariants} to associate a Wall form to a pair of manifolds.
Let $M$ be a compact oriented manifold of dimension $m$ with non-empty boundary. 
Let $A \subset \partial M$ be a submanifold of dimension $m-1$. 
Let $p$ and $q$ be positive integers with $p + q + 1 = m$.
Suppose that $p$ and $q$ satisfy the inequalities from
(\ref{equation: inequalities mfd with boundary}).
We set $H = \pi_{q}(S^{p})$ and denote by $\mathcal{W}^{\partial}_{p,q}(M, A)$ the $H$-pair given by setting,
$$
\mathcal{W}^{\partial}_{p,q}(M, A)_{-} := \pi_{p+1}(M, A),  \quad
\mathcal{W}^{\partial}_{p,q}(M, A)_{+} := \pi_{q}(A), 
$$
and then by setting $\tau$ equal to the bilinear map, 
$$\tau:= \tau^{\partial}_{p,q}:  \pi_{p+1}(M, A)\otimes\pi_{q}(S^{p}) \longrightarrow \pi_{q}(A),$$
from (\ref{equation: boundary tau}).
We need to define a suitable form-parameter. 
Let $\mathbf{G}_{p,q}$ denote the abelian group $\pi_{p-1}(SO_{q})$.
The group $\mathbf{G}_{p,q}$ together with the maps from Proposition \ref{proposition: additivity defect},
$d_{q}: \pi_{q}(S^{p}) \rightarrow \pi_{q-1}(SO_{p})$ and $\bar{\pi}_{q}: \pi_{q-1}(SO_{p}) \rightarrow \pi_{q}(S^{p}),$
make the $4$-tuple
$(\mb{G}_{p, q}, \; d_{q}, \; \pi_{q},\; (-1)^{q})$
into a form parameter.
It follows then directly from Lemma \ref{lemma: wall form salient properties} that the $4$-tuple, 
\begin{equation} \label{eq: Wall-form of M}
(\mathcal{W}^{\partial}_{p, q}(M, A), \; \lambda^{\partial}_{p, q}, \; \mu_{q}, \; \alpha_{q}),
\end{equation}
is a reduced Wall form with form-parameter $(\mb{G}_{p, q}, \; d_{q}, \; \pi_{q}, (-1)^{q})$. 
We call the Wall form of (\ref{eq: Wall-form of M}) the \textit{Wall form of degree $(p, q)$ associated to $(M, A)$}.
This construction should be compared to \cite[Section 4.3]{P 15a}.

We now state two basic propositions that follow directly from the definitions of $\tau^{\partial}_{p,q}$, $\lambda^{\partial}_{p,q}$, $\mu_{q}$, and $\alpha_{q}$.
\begin{proposition}
Let $M$ and $N$ be $m$ dimensional manifolds with non-empty boundary. 
Let $A \subset \partial M$ and $B \subset \partial N$ be submanifolds of dimension $m-1$.  
Let $p, q \in \Z_{\geq 0}$ be chosen with $p + q + 1 = m$ so that the Wall forms $\mathcal{W}^{\partial}_{p,q}(N, B)$ and $\mathcal{W}^{\partial}_{p,q}(M, A)$ are defined. 
Then any embedding 
$\varphi: (N, B) \longrightarrow (M, A)$
induces a unique morphism of Wall forms
$\varphi_{*}: \mathcal{W}^{\partial}_{p,q}(N, B) \longrightarrow \mathcal{W}^{\partial}_{p,q}(M, A).$
\end{proposition}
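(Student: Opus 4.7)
The plan is to define $\varphi_*$ to be the pair of standard functorially induced maps on homotopy groups, and then to verify that each piece of Wall-form structure is preserved. I would set $\varphi_*^{-} : \pi_{p+1}(N, B) \to \pi_{p+1}(M, A)$ to be the map induced by $\varphi$ as a map of pairs, and $\varphi_*^{+} : \pi_q(B) \to \pi_q(A)$ to be the map induced by the restriction $\varphi|_B : B \to A$. Uniqueness is then automatic: any homomorphism of the underlying $H$-pairs must send the homotopy class of a (relative) spheroid to the class of its composition with $\varphi$, which forces this choice.

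I would next verify compatibility with $\tau^{\partial}_{p,q}$ directly from the formula (\ref{equation: boundary tau}). For $x = [f]$ with $f : (D^{p+1}, S^p) \to (N, B)$ and $z = [\phi]$ with $\phi : S^q \to S^p$, both $\tau^{\partial}_{p,q}(\varphi_*^{-} x, z)$ and $\varphi_*^{+}\tau^{\partial}_{p,q}(x, z)$ are represented by $\varphi|_B \circ f|_{S^p} \circ \phi$, so they agree. Thus $\varphi_*$ is a morphism in $\Ab^{2}_{H}$.

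The substantive part is checking preservation of $\lambda^{\partial}_{p,q}$, $\mu_q$, and $\alpha_q$. The key observation is that $\varphi|_B : B \to A$ is a codimension-zero embedding, since $\dim B = \dim A = m-1$, and therefore restricts to a diffeomorphism onto its image. Applying Lemma \ref{lemma: basic embedding lemma} to realize all relevant classes by smooth embeddings, I would argue: (i) transverse intersections of embedded representatives $f|_{S^p}$ and $g$ in $B$ correspond bijectively, with matching signs, to transverse intersections of their images in $A$, so $\lambda^{\partial}_{p,q}$ is preserved; (ii) the $\pi_q(S^p)$-valued pairing $\mu_q$ is, by \cite[Construction 3.1]{P 15a}, extracted from local transverse data in a tubular neighborhood of embedded representatives, and such a neighborhood is carried diffeomorphically into $A$ by $\varphi|_B$, so $\mu_q$ is preserved; (iii) the normal bundle of an embedded sphere $g : S^q \hookrightarrow B$ is canonically identified, via $\varphi|_B$, with the normal bundle of $\varphi|_B \circ g : S^q \hookrightarrow A$, so the classifying element $\alpha_q$ is preserved.

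The main obstacle is really the geometric bookkeeping for $\mu_q$, since its construction in \cite{P 15a} involves delicate local framing data near self-intersections of immersed spheres; however, this data lives entirely in a tubular neighborhood of the image, which $\varphi|_B$ carries diffeomorphically into $A$, so the invariant is preserved on the nose. Combined with (i) and (iii), this shows that $\varphi_*$ respects every defining datum in Definition \ref{defn: Wall-form}, and hence is the unique morphism of Wall forms induced by $\varphi$.
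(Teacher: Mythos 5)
The paper does not spell out a proof: it simply notes that this proposition and the subsequent one ``follow directly from the definitions of $\tau^{\partial}_{p,q}$, $\lambda^{\partial}_{p,q}$, $\mu_{q}$, and $\alpha_{q}$.'' Your proposal is exactly the direct verification that the paper is gesturing at: define $\varphi_{*}$ via the functorially induced maps on $\pi_{p+1}(-,-)$ and $\pi_{q}(-)$, check compatibility with $\tau^{\partial}_{p,q}$ from its formula, and then observe that $\varphi|_{B}$ is a codimension-zero embedding (hence a diffeomorphism onto its image) so that the intersection-theoretic, tubular-neighborhood, and normal-bundle data defining $\lambda^{\partial}_{p,q}$, $\mu_{q}$, and $\alpha_{q}$ are all carried over. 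This is correct and is the same approach.

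One minor imprecision: your uniqueness remark reads as if any abstract homomorphism of the underlying $H$-pairs is forced to be $\varphi_{*}$, which is not what is meant (an arbitrary group homomorphism need not be of that form). The word ``unique'' here only asserts that the functorially induced morphism is canonically determined by $\varphi$; your first sentence already establishes this, and the extra justification is slightly misleading but harmless.
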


Let $(M, A)$ and $(N, B)$ be exactly as in the previous proposition. 
Consider the pair $(M\natural N, A\# B)$ obtained by forming the boundary connected sum of $M$ with $N$ along embeddings,
$(M, A) \hookleftarrow (D^{m}_{+}, \partial_{0}D^{m}_{+}) \hookrightarrow (N, B).$ 
\begin{proposition} \label{proposition: direct sum-connect sum}
There is an isomorphism of Wall forms,
$\mathcal{W}^{\partial}_{p,q}(M\natural N, A\# B) \; \cong \; \mathcal{W}^{\partial}_{p, q}(M, A)\oplus\mathcal{W}^{\partial}_{p,q}(N, B).$
\end{proposition}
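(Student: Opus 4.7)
The plan is to establish the splitting in two stages: first identify the underlying $H$-pair, then verify that all the forms respect the direct sum decomposition. Throughout, I would exploit the fact that the boundary connected sum $M \natural N$ is obtained by gluing $M$ and $N$ along a half-disk $D^m_+$, so that $M$ and $N$ sit inside $M\natural N$ as codimension-zero submanifolds meeting in a half-disk whose intrinsic boundary in $A\# B$ is the separating $(m-2)$-sphere.

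First I would establish the splittings
\begin{equation*}
\pi_{q}(A\# B) \;\cong\; \pi_{q}(A)\oplus \pi_{q}(B),
\qquad
\pi_{p+1}(M\natural N,\,A\#B) \;\cong\; \pi_{p+1}(M,A)\oplus\pi_{p+1}(N,B),
\end{equation*}
as abelian groups. For the first, note that the inclusions $A\setminus D^{m-1}\hookrightarrow A\#B$ and $B\setminus D^{m-1}\hookrightarrow A\#B$, combined with the ``collapse the other summand'' retractions, exhibit $\pi_q(A)$ and $\pi_q(B)$ as retracts of $\pi_q(A\#B)$. Surjectivity of the sum map then follows because, by Lemma \ref{lemma: basic embedding lemma}(ii), any class in $\pi_q(A\# B)$ is represented by an embedding $S^q \to A\# B$, which by the connectivity hypothesis $\kappa(A),\kappa(B) > |q-p|+1 \geq q-p+1$ combined with a general position argument across the separating sphere may be isotoped off the separating $S^{m-2}$ and hence pushed into one of the two summands. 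The argument for $\pi_{p+1}(M\natural N,A\# B)$ is analogous, using Lemma \ref{lemma: basic embedding lemma}(i) to represent classes by embeddings $(D^{p+1},S^p)\to(M\natural N,A\#B)$ and the connectivity hypothesis $\kappa(M,A),\kappa(N,B)>|q-p|+1$ to push each embedding off the separating half-disk $D^{m-1}_{+}$. These splittings are compatible with the bilinear map $\tau^{\partial}_{p,q}$ because if $(x,[\phi])$ is represented by an embedded disk in $M$ and $\phi\in\pi_q(S^p)$, then $\tau^{\partial}_{p,q}(x,\phi)$ is represented in $A$, with the analogous statement for $N$; cross terms vanish because the $\tau$ construction only involves the image of the given embedded disk.

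Next I would verify that $\lambda^{\partial}_{p,q}$, $\mu_q$, and $\alpha_q$ split on the decomposition. For $\lambda^{\partial}_{p,q}$, if $x \in \pi_{p+1}(M,A)$ and $y\in\pi_q(B)$, then representatives can be chosen whose images lie in disjoint open subsets $M$ and $N\setminus\{*\}$ of $M\natural N$, hence their boundary restrictions lie in disjoint open subsets of $A\# B$ and have zero algebraic intersection number; the same holds with the roles of $M$ and $N$ exchanged. For $\mu_q$, if $y\in\pi_q(A)$ and $y'\in\pi_q(B)$, pick embedded representatives lying in the disjoint halves of $A\# B$; by Theorem \ref{prop: Generalized Whitney Trick} (or directly by disjointness) the pairing vanishes. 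Finally, $\alpha_q$ depends only on the normal bundle of an embedded $q$-sphere, so once a class in $\pi_q(A\# B)$ is realized by an embedding inside $A$ or inside $B$, its value of $\alpha_q$ agrees with the value computed in the corresponding summand. Putting these together shows the splitting isomorphism on the underlying $H$-pairs is an isomorphism of Wall forms.

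The main obstacle is the splitting of the homotopy groups, specifically showing that the natural inclusions $\pi_q(A)\oplus\pi_q(B)\to\pi_q(A\#B)$ and $\pi_{p+1}(M,A)\oplus\pi_{p+1}(N,B)\to\pi_{p+1}(M\natural N, A\# B)$ are surjective. The retraction argument gives splittings easily, but surjectivity genuinely requires that classes can be represented disjointly from the separating submanifold; this is where the connectivity assumptions from (\ref{equation: inequalities mfd with boundary}) are essential. Once the homotopy-group splitting is in hand, the compatibility of $\tau$, $\lambda$, $\mu$, and $\alpha$ with the decomposition is automatic from their geometric definitions via disjointly supported representatives.
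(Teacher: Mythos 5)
The paper itself gives no proof of this proposition; it simply asserts that it and its neighbor "follow directly from the definitions." Your overall strategy (split the two homotopy groups, then check that $\tau^{\partial}_{p,q}$, $\lambda^{\partial}_{p,q}$, $\mu_q$, $\alpha_q$ respect the decomposition) is the right one, and the compatibility checks you describe for $\tau$, $\lambda$, $\mu$, and (via the additivity defect formula together with $\mu_q(x,y)=0$ for cross terms) $\alpha$ are all sound. However, there is a genuine gap in the step you yourself flag as the main obstacle.

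Your surjectivity argument for $\pi_q(A)\oplus\pi_q(B)\to\pi_q(A\#B)$ is wrong. You claim that any embedded $S^q\subset A\#B$ "may be isotoped off the separating $S^{m-2}$ and hence pushed into one of the two summands." First, this is not a general-position statement: $S^q$ and $S^{m-2}$ inside the $(m-1)$-manifold $A\#B$ meet generically in a $(q-1)$-manifold, and removing such an intersection would require $q+(m-2)<m-1$, i.e.\ $q<1$, which never holds here. Second, and more fundamentally, the conclusion cannot be true: if a class $x\in\pi_q(A\#B)$ has nonzero image under both of the "collapse the other summand" maps $A\#B\to A$ and $A\#B\to B$ (which is the generic case, e.g.\ $x = x_A + x_B$ with both $x_A,x_B\neq 0$), then any representative of $x$ must meet the separating sphere, since a representative contained entirely in, say, $A\setminus D^{m-1}$ would have zero image in $\pi_q(B)$. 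So what you are proving --- that every class can be pushed into one side --- would show that $\pi_q(A\#B)$ is the set-theoretic \emph{union} of the two retracts, which is false unless one of them is trivial. The same issue afflicts your argument for the relative groups $\pi_{p+1}(M\natural N, A\#B)$.

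The correct route for surjectivity is homotopy-theoretic rather than geometric. One should observe that in the relevant range (which the hypotheses of (\ref{equation: inequalities mfd with boundary}) guarantee, together with $\min\{p,q\}\geq 3$), the inclusion of the wedge $A\vee B$ (or, more precisely, of the space obtained from $A\setminus\Int D^{m-1}$ and $B\setminus\Int D^{m-1}$ glued along $S^{m-2}$) into $A\#B$ is highly connected, so $\pi_q(A\#B)\cong\pi_q(A\vee B)$, and then $\pi_q(A\vee B)\cong\pi_q(A)\oplus\pi_q(B)$ by Blakers--Massey/Hilton--Milnor since $A\wedge B$ is $(\kappa(A)+\kappa(B)+1)$-connected and $q\leq\kappa(A)+\kappa(B)$ under the stated connectivity bounds. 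An analogous homotopy-excision argument handles the relative groups $\pi_{p+1}(M\natural N, A\#B)$. Your retraction argument for split injectivity, and your compatibility arguments for the forms once the splitting is known, can then be retained as written.
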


\section{High Connectivity of $K^{\partial}(M)_{p,q}$} \label{section: high connectivity of K}
Let $(M; \partial_{0}M, \partial_{1}M)$ be an $m$-dimensional manifold triad with $\partial_{0}M \neq \emptyset$. 
Let $p$ and $q$ be positive integers with $p+q+1 = m$ and suppose that the inequalities of (\ref{equation: inequalities mfd with boundary}) are satisfied.
In this section we will prove Theorem \ref{theorem: high-connectivity}
which asserts that $|K^{\partial}(M)_{p,q}|$ is $\tfrac{1}{2}(r_{p,q}(M) - 4 - d)$-connected and that 
$lCM(K^{\partial}(M)_{p,q}) \geq \tfrac{1}{2}(r_{p,q}(M) - 1 - d)$, where $d = d(\pi_{q}(S^{p}))$ is the generating set rank. 
Our strategy is to compare $K^{\partial}(M)_{p,q}$ to the complex $L(\mathcal{W}^{\partial}_{p,q}(M, \partial_{1}M))$ from Definition \ref{defn: algebraic simplicial complex}, associated to the Wall form $\mathcal{W}^{\partial}_{p,q}(M, \partial_{1}M)$.
In view of Theorem \ref{thm: high connectivity}, we will need to construct a simplicial map 
$K^{\partial}(M)_{p,q} \longrightarrow L(\mathcal{W}^{\partial}_{p,q}(M, \partial_{1}M))$
and then prove that it is highly connected. 
The construction of this map and proof of its high-connectivity is carried out over the course of this section, which contains the technical core of the paper. 

\subsection{A simplicial map} \label{subsection: A simplicial map}
Let $(M; \partial_{0}M, \partial_{1}M)$ $m$-dimensional manifold triad with $\partial_{0}M \neq \emptyset$ and let 
$p$ and $q$ be positive integers with $p+q+1 = m$. 
We will construct a simplicial map 
\begin{equation} \label{equation: simplicial comparison map}
F_{p,q}: K^{\partial}(M)_{p,q} \longrightarrow L(\mathcal{W}^{\partial}_{p,q}(M, \partial_{1}M)).
\end{equation}
Recall the pair $(\widehat{V}_{p,q}, \widehat{W}_{p,q})$. 
Recall from Construction \ref{construction: main probing manifold} the core,
$(B_{p, q}, C_{p, q}) \stackrel{\simeq} \hookrightarrow  (\widehat{V}_{p, q}, \widehat{W}_{p,q}).
$
Let $(a_{0}, b_{0}) \in S^{p}\times S^{q}$ be the basepoint used in the construction of $(B_{p, q}, C_{p,q})$ from Construction \ref{construction: main probing manifold}. 
Let $\sigma \in \pi_{p+1}(\widehat{V}_{p, q}, \widehat{W}_{p,q})$ be the class represented by the embedding,
$
(D^{p+1}\times\{b_{0}\}, S^{p}\times\{b_{0}\}) \hookrightarrow (B_{p, q}, C_{p, q}) \hookrightarrow (\widehat{V}_{p, q}, \widehat{W}_{p,q}),
$
and let $\rho \in \pi_{q}(\widehat{W}_{p,q})$ be the class represented by the embedding,
$
\{a_{0}\}\times S^{q} \hookrightarrow C_{p, q} \hookrightarrow \widehat{W}_{p,q}.
$
It follows directly from the construction of $\widehat{V}_{p,q}$ that 
$\lambda_{p, q}(\sigma, \rho) = 1$ and $\alpha_{q}(\rho) = 0.$
Using this observation we may define a morphism of Wall forms
\begin{equation} \label{equation: main morphism of standard Wall form}
T_{p,q}: \mb{W} \; \longrightarrow \; \mathcal{W}^{\partial}_{p,q}(\widehat{V}_{p, q}, \widehat{W}_{p,q}), \quad \quad a \mapsto \sigma, \quad b \mapsto \rho,
\end{equation}
where $a \in \mb{W}_{-}$ and $b \in \mb{W}_{+}$ are the standard generators used in the construction of $\mb{W}$. 
Using the calculations,
$$
\pi_{p+1}(\widehat{V}_{p, q}, \widehat{W}_{p,q}) \cong \Z, \quad \pi_{q}(\widehat{W}_{p,q}) \cong \pi_{q}(S^{p})\oplus \Z,
$$
it follows that this morphism $T_{p,q}$ is an isomorphism of Wall forms. 
Combining this isomorphism together with Proposition \ref{proposition: direct sum-connect sum} we obtain the following result.
\begin{proposition}
For all $g \in \Z_{\geq 0}$, there is an isomorphism of Wall forms,
$\mb{W}^{g} \cong \mathcal{W}^{\partial}_{p,q}(V^{g}_{p, q}, W^{g}_{p,q})$.
\end{proposition}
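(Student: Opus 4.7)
I would prove the statement by induction on $g$, using Proposition \ref{proposition: direct sum-connect sum} (direct-sum of Wall forms corresponds to boundary-connected-sum of pairs) as the inductive engine, with the base case $g=1$ supplied by the isomorphism $T_{p,q}$ together with a homotopy invariance observation.

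For the base case $g = 0$, I interpret $V^{0}_{p,q}$ as the disk $D^{m}$ (the empty boundary connected sum), with $\partial_{1}V^{0}_{p,q} = W^{0}_{p,q}$ a closed half-disk of dimension $p+q$; both spaces are contractible, so $\pi_{p+1}(V^{0}_{p,q},W^{0}_{p,q}) = 0 = \pi_{q}(W^{0}_{p,q})$, and the associated Wall form is trivial, in agreement with the convention $\mb{W}^{0} = \mb{0}$. For $g = 1$, the pair $(V_{p,q}, W_{p,q}) = (V^{1}_{p,q}, W^{1}_{p,q})$ is a strong deformation retract of $(\widehat{V}_{p,q}, \widehat{W}_{p,q})$, since $\widehat{V}_{p,q}$ is obtained from $V_{p,q}$ by attaching the thickened half-disk $D^{p+q}_{+}\times[0,1]$ along its face $D^{p+q}_{+}\times\{1\}$, and the obvious collapse onto this face carries $\widehat{W}_{p,q}$ onto $W_{p,q}$. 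Because the data $(\tau^{\partial}_{p,q}, \lambda^{\partial}_{p,q}, \mu_{q}, \alpha_{q})$ defining the Wall form depends only on the homotopy type of the pair (the maps can all be defined via representing classes by embeddings on the pair up to homotopy), this homotopy equivalence induces an isomorphism $\mathcal{W}^{\partial}_{p,q}(V^{1}_{p,q},W^{1}_{p,q}) \cong \mathcal{W}^{\partial}_{p,q}(\widehat{V}_{p,q},\widehat{W}_{p,q})$. Composing with the isomorphism $T_{p,q}$ of (\ref{equation: main morphism of standard Wall form}) yields $\mb{W} \cong \mathcal{W}^{\partial}_{p,q}(V^{1}_{p,q},W^{1}_{p,q})$.

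For the inductive step, note that by definition $V^{g+1}_{p,q} = V^{g}_{p,q}\natural V^{1}_{p,q}$, with the boundary connected sum performed along disks in the chosen $(p+q)$-disks $\partial_{0}V^{g}_{p,q}$ and $\partial_{0}V^{1}_{p,q}$; consequently the faces satisfy $W^{g+1}_{p,q} = \partial_{1}V^{g+1}_{p,q} = W^{g}_{p,q}\# W^{1}_{p,q}$ (as the boundary connect sum at a disk in $\partial_{0}$ meets $\partial_{1}$ along a disk on which one performs the ordinary connected sum). Applying Proposition \ref{proposition: direct sum-connect sum} and the inductive hypothesis then gives
\[
\mathcal{W}^{\partial}_{p,q}(V^{g+1}_{p,q},W^{g+1}_{p,q}) \;\cong\; \mathcal{W}^{\partial}_{p,q}(V^{g}_{p,q},W^{g}_{p,q})\oplus\mathcal{W}^{\partial}_{p,q}(V^{1}_{p,q},W^{1}_{p,q}) \;\cong\; \mb{W}^{g}\oplus\mb{W} \;=\; \mb{W}^{g+1},
\]
closing the induction.

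No serious obstacle is expected; the substantive input, namely $T_{p,q}$ being an isomorphism and the direct-sum formula of Proposition \ref{proposition: direct sum-connect sum}, is already in hand. The only point requiring any care is a bookkeeping check that the boundary-connect-sum neighborhoods used to identify $W^{g+1}_{p,q}$ with $W^{g}_{p,q}\# W^{1}_{p,q}$ are compatible with the disks employed in Proposition \ref{proposition: direct sum-connect sum}, but this is a matter of choosing the gluing neighborhoods consistently and does not pose any real difficulty.
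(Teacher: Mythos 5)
Your proposal is correct and takes essentially the same approach as the paper: the paper's (implicit, one-sentence) proof also amounts to combining the isomorphism $T_{p,q}$ with Proposition \ref{proposition: direct sum-connect sum} and iterating, exactly as you do. The only thing worth flagging is that Proposition \ref{proposition: direct sum-connect sum} requires the connect-sum disk to lie in the distinguished subspace $A = W^{g}_{p,q}$ (not in $\partial_{0}V^{g}_{p,q}$ as you place it), so that the resulting pair $(V^{g}_{p,q}\natural V^{1}_{p,q},\, W^{g}_{p,q}\# W^{1}_{p,q})$ differs from $(V^{g+1}_{p,q}, W^{g+1}_{p,q})$ by an extra deleted disk in the boundary face; this is handled by observing that the inclusion is highly connected and hence induces an isomorphism of Wall forms, a point you should make explicit rather than file under bookkeeping.
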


We are now ready to define the simplicial map $F_{p,q}$ from (\ref{equation: simplicial comparison map}). 
Let $\phi \in K^{\partial}(M)_{p,q}$ be a vertex. 
We define $F_{p,q}(\phi) \in L(\mathcal{W}^{\partial}_{p,q}(M, \partial_{1}M))$ to be the morphism of Wall forms given by the composite,
\begin{equation}
\xymatrix{
\mb{W} \ar[rr]^{T_{p,q}\ \ \ \ \ } && \mathcal{W}^{\partial}_{p,q}(\widehat{V}_{p,q}, \widehat{W}_{p,q}) \ar[rr]^{\phi_{*}} && \mathcal{W}^{\partial}_{p,q}(M, \partial_{1}M),
}
\end{equation}
where the second map $\phi_{*}$ is the morphism of Wall forms induced by the embedding $\phi$. 
Using this morphism $T_{p,q}$ together with Proposition \ref{proposition: direct sum-connect sum}, it follows that 
$r(\mathcal{W}^{\partial}_{p,q}(M, \partial_{1}M)) \; \geq \; r_{p,q}(M).$

\subsection{Cohen Macaulay complexes and the link lifting property} \label{subsection: a simplicial technique}

Our next step is to prove that the simplicial map $T_{p,q}$ from (\ref{equation: simplicial comparison map}) is highly connected.
Doing this will require us to import a simplicial technique.
\begin{definition} \label{defn: link lifting property 1}
Let $f: X \longrightarrow Y$ be a simplicial map between two simplicial complexes. 
The map is said to have the \textit{link lifting property} if the following condition holds:
\begin{itemize}
\item Let $y \in Y$ be a vertex and let $A \subset X$ be a set of vertices such that $f(a) \in \lk_{Y}(y)$ for all $a \in A$.
Then there exists $x \in X$ with $f(x) = y$ such that $a \in \lk_{X}(x)$ for all $a \in A$.
\end{itemize}
\end{definition}

In order for the map $f$ to have the link lifting property it is necessary that the above condition be satisfied for any set of vertices $A \subset X$. 
In practice, it can be difficult to verify the above lifting property for totally arbitrary subsets $A \subset X$, and one
may only be able to verify it for sets of vertices subject to some condition. 
We will need to work with a refined version of Definition \ref{defn: link lifting property 1}. 
Recall that for any set $K$, a \textit{symmetric relation} is a subset $\mathcal{R} \subset K\times K$ that is invariant under the ``coordinate permutation'' map $(x, y) \mapsto (y, x)$. 
A subset $C \subset K$ is said to be in \textit{general position with respect to $\mathcal{R}$} if $(x, y) \in \mathcal{R}$ for any two elements $x, y \in C$. 
We will need to consider symmetric relations defined on the set of vertices of a simplicial complex. 
Let $X$ be a simplicial complex and let $\mathcal{R} \subset X\times X$ be a symmetric relation on the vertices of $X$. 
The relation $\mathcal{R}$ is said to be \textit{edge compatible} if for any $1$-simplex $\{x, y\} < X$, the pair $(x, y)$ is an element of $\mathcal{R}$.

\begin{definition} \label{defn: cone lifting property} 
Let $f: X \longrightarrow Y$ be a simplicial map between two simplicial complexes. 
Let $\mathcal{R} \subset X\times X$ be a symmetric relation on the set of vertices of the complex $X$.
The map $f$ is said to have the \textit{link lifting property with respect to $\mathcal{R}$} if the following condition holds:
\begin{itemize}
\item
Let $y \in Y$ be any vertex and let $A \subset X$ be a finite set of vertices in general position with respect to $\mathcal{R}$ such that
 $f(a) \in \lk_{Y}(x)$ for all $a \in A$.
Then given another finite set of vertices $B \subset X$ (not necessarily in general position with respect to $\mathcal{R}$), there exists a vertex $x \in X$ with $f(x) = y$ such that 
$a \in \textstyle{\lk_{X}(x)}$ for all $a \in A$ and $(b, x) \in \mathcal{R}$
for all $b \in B$.
\end{itemize}
\end{definition}
The following lemma below is a restatement of \cite[Lemma 2.3]{P 15b}.
Its proof in \cite{P 15b} abstracts and formalizes the argument used in the proof of \cite[Lemma 5.4]{GRW 14}.
It also uses \cite[Theorem 2.4]{GRW 14} which is a generalization of the ``Coloring Lemma'' of Hatcher and Wahl from \cite{HW 10}.
\begin{lemma} \label{lemma: link lift lemma}
Let $X$ and $Y$ be simplicial complexes and let $f: X \longrightarrow Y$ be a simplicial map.
Let $\mathcal{R} \subset X\times X$ be an edge compatible symmetric relation. 
Suppose that the following conditions are met:
\begin{enumerate} \itemsep.1cm
\item[(i)] $f$ has the \textit{link lifting property} with respect to $\mathcal{R}$;
\item[(ii)] $lCM(Y) \geq n$;
\end{enumerate}
Then the induced map $|f|_{*}: \pi_{j}(|X|) \longrightarrow \pi_{j}(|Y|)$ is injective for all $j \leq n-1$. 
Furthermore, suppose that in addition to properties (i) and (ii) the map $f$ satisfies:
\begin{enumerate}
\item[(iii)] $f(\lk_{X}(\zeta)) \leq \lk_{Y}(f(\zeta))$ for all simplices $\zeta < X$.
\end{enumerate}
Then it follows that $lCM(X) \geq n$. 
\end{lemma}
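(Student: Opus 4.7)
The plan is to follow the standard ``coloring'' strategy originating in \cite{HW 10}, refined in \cite[Lemma 5.4]{GRW 14}, and abstracted in \cite[Lemma 2.3]{P 15b}: reduce both claims via simplicial approximation to lifting a simplicial map of a PL ball through $f$, then build the lift inductively over interior vertices, using the link lifting property to choose each new vertex and using edge-compatibility together with the relation $\mathcal{R}$ to propagate the general-position hypothesis through the induction.

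For the first claim, fix $j \leq n-1$ and a class in $\pi_{j}(|X|)$ represented by a simplicial map $\phi: K \to X$, where $K$ is a PL triangulation of $S^{j}$, whose image is trivial in $\pi_{j}(|Y|)$. After further subdivision, the null-homotopy is carried by a simplicial map $\Phi: L \to Y$, where $L$ is a PL triangulation of $D^{j+1}$ with $\partial L = K$ and $\Phi|_{K} = f\circ\phi$. It suffices to produce a simplicial extension $\tilde\Phi: L \to X$ of $\phi$ with $f\circ\tilde\Phi = \Phi$, perhaps after subdividing $L$ rel $K$. Here hypothesis (ii) enters: using $lCM(Y) \geq n$ together with \cite[Theorem 2.4]{GRW 14}, we may subdivide $L$ rel $K$ so that $\Phi$ has the technical property required for the inductive lift to succeed (essentially, so that over each new vertex the relevant link in $Y$ is highly enough connected to absorb choices).

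The lift $\tilde\Phi$ is then constructed vertex-by-vertex over the interior vertices of $L$, ordered $v_{1}, \dots, v_{N}$. When processing $v_{i}$, set
$$
A_{i} := \{\,\tilde\Phi(w) : w \text{ an already-lifted neighbor of } v_{i}\,\}, \qquad B_{i} := \{\,\tilde\Phi(v_{1}), \dots, \tilde\Phi(v_{i-1})\,\},
$$
and seek $\tilde\Phi(v_{i}) \in X$ with $f(\tilde\Phi(v_{i})) = \Phi(v_{i})$, with $a \in \lk_{X}(\tilde\Phi(v_{i}))$ for every $a \in A_{i}$, and with $(b,\tilde\Phi(v_{i})) \in \mathcal{R}$ for every $b \in B_{i}$. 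Two bookkeeping observations make the induction go through: first, by always including the full set $B_{i}$ in the link-lifting input, every newly-lifted vertex is $\mathcal{R}$-related to every previously lifted vertex, so pairs of previously-lifted vertices are $\mathcal{R}$-related; combined with edge-compatibility, this guarantees that at stage $i$ the set $A_{i}$ is in general position with respect to $\mathcal{R}$, so hypothesis (i) is applicable; second, the resulting $\tilde\Phi$ defines a simplicial map because the conditions $a \in \lk_{X}(\tilde\Phi(v_{i}))$ ensure that adjacencies in $L$ give simplices in $X$. The main obstacle is the subdivision step: one must arrange that the subdivision needed to make $\Phi$ colorable does not destroy the boundary condition on $K$, which is exactly the content of \cite[Theorem 2.4]{GRW 14}.

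For the second claim, assume additionally hypothesis (iii) and let $\zeta < X$ be a $p$-simplex. By (iii), $f$ restricts to a simplicial map $f': \lk_{X}(\zeta) \to \lk_{Y}(f(\zeta))$. The restriction of $\mathcal{R}$ to $\lk_{X}(\zeta)\times\lk_{X}(\zeta)$ is edge-compatible, and the link lifting property with respect to the restricted relation follows from the one for $f$ by enlarging the input set $A$ to $A \cup (\text{vertices of } \zeta)$: these extra vertices lie in the link of $y$ in $Y$ by the defining property of $\lk_{Y}(f(\zeta))$, they are in general position with respect to $\mathcal{R}$ with each other by edge-compatibility applied to $\zeta$, and similarly for any $a \in A$ since $A \subseteq \lk_{X}(\zeta)$. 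Writing $p' := \dim f(\zeta) \leq p$, one has $lCM(\lk_{Y}(f(\zeta))) \geq n - p' - 1$ from $lCM(Y)\geq n$. Applying the first part of the lemma to $f'$ shows that $|f'|_{*}: \pi_{j}(|\lk_{X}(\zeta)|) \to \pi_{j}(|\lk_{Y}(f(\zeta))|)$ is injective for $j \leq n - p' - 2$; since the target groups vanish in this range, so do the source groups, giving $(n-p-2)$-connectivity of $\lk_{X}(\zeta)$ and hence $lCM(X) \geq n$.
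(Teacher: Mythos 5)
The paper does not actually prove this lemma in-text; it cites \cite[Lemma 2.3]{P 15b} and attributes the proof to the coloring machinery of \cite{HW 10} and \cite[Theorem 2.4]{GRW 14}. So the comparison has to be to the broad strategy the paper gestures at, which you have correctly identified: reduce to lifting a simplicial filling $\Phi: L \to Y$ through $f$ vertex-by-vertex, and restrict to links for the second claim.

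There is, however, a genuine gap in your general-position bookkeeping for the first claim. At stage $i$ the set $A_{i}$ of already-lifted neighbors of $v_{i}$ contains not only the interior lifts $\tilde\Phi(v_{j})$, $j<i$, but also the boundary images $\phi(w)$ for $w \in K$ adjacent to $v_{i}$ in $L$. Your argument ("include all of $B_{i}$, plus edge-compatibility") accounts for pairs $\bigl(\tilde\Phi(v_{j}),\tilde\Phi(v_{j'})\bigr)$ of previously lifted interior vertices, but not for the other two types of pairs occurring in $A_{i}$. For a pair $\bigl(\tilde\Phi(v_{j}), \phi(w)\bigr)$ you would at least need to enlarge $B_{j}$ to contain all of $\phi(K^{(0)})$, which you do not do. Worse, for a pair $\bigl(\phi(w),\phi(w')\bigr)$ with $w, w'$ both adjacent to $v_{i}$ in $L$ but \emph{not} adjacent to each other in $K$, edge-compatibility gives nothing, and there is no inductive mechanism in your argument to force $(\phi(w),\phi(w'))\in\mathcal{R}$. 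This is precisely the combinatorial difficulty that the Hatcher--Wahl coloring lemma (and \cite[Theorem 2.4]{GRW 14}) is designed to resolve: the point of the coloring/subdivision is to control which sets of vertices can appear as the star of an interior vertex, so that the sets one feeds into the link-lifting property are guaranteed to be in general position. Your sketch mentions the coloring lemma but attributes its role to a purely set-theoretic ``subdivide rel $K$'' concern rather than to this general-position issue, so the sketch as written does not close the gap. (A secondary point worth flagging: the link-lifting property only produces $x$ with $a \in \lk_{X}(x)$ for each $a\in A$, i.e.\ edges, not that $A\cup\{x\}$ spans a simplex; in the application both complexes are flag complexes so this is harmless, but at the level of the abstract lemma it deserves a remark or a restriction to flag complexes.)
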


\subsection{Proof of Theorem \ref{theorem: high-connectivity}} \label{subsection: proof of main high connectivity theorem}
We now give the proof of Theorem \ref{theorem: high-connectivity}. 
We do this by applying Lemma \ref{lemma: link lift lemma} to the simplicial map $F_{p,q}: K^{\partial}(M)_{p,q} \longrightarrow L(\mathcal{W}_{p,q}(M, \partial_{1}M))$.
In order to apply this lemma to $F_{p,q}$, we will need to define a suitable symmetric relation on the vertices of the complex $K^{\partial}(M)_{p,q}$.
\begin{definition} \label{defn: transversality relation}
We define $\mathcal{T} \subset K^{\partial}(M)_{p,q}\times K^{\partial}(M)_{p,q}$ to be the subset consisting of those pairs $(\phi_{1}, \phi_{2})$ such that $\phi_{1}(B_{p,q})$ and $\phi_{2}(B_{p,q})$ are transverse in $M$. 
\end{definition}
Clearly the subset $\mathcal{T} \subset K^{\partial}(M)_{p,q}\times K^{\partial}(M)_{p,q}$ is a symmetric relation on the vertices. 
Furthermore, this relation is edge compatible, i.e.\ if the set $\{\phi_{1}, \phi_{2}\} \leq K^{\partial}(M)_{p,q}$ is a $1$-simplex then the pair $(\phi_{1}, \phi_{2})$ is contained in $\mathcal{T}$.
\begin{proof}[Proof of Theorem \ref{theorem: high-connectivity}] 
Let $r_{p,q}(M) \geq g$ and let $d = d(\pi_{q}(S^{p}))$ be the generating set rank of $\pi_{q}(S^{p})$. 
We will show that 
$|K(M)_{p,q}|$ is $\tfrac{1}{2}(g - 4 - d)$-connected and 
$lCM(K^{\partial}(M)_{p,q}) \geq \tfrac{1}{2}(g - 1 - d).$
Since $r(\mathcal{W}^{\partial}_{p,q}(M)) \geq r_{p,q}(M) \geq g$, Theorem \ref{thm: high connectivity} implies that the space $|L(\mathcal{W}^{\partial}_{p,q}(M, \partial_{1}M))|$ is $\tfrac{1}{2}(g - 4 - d)$-connected and that
$lCM\left(L(\mathcal{W}^{\partial}_{p,q}(M, \partial_{1}M)\right) \geq \tfrac{1}{2}(g - 1 - d).$
The proof of the theorem will follow from Lemma \ref{lemma: link lift lemma} once we verify the following two properties:
\begin{enumerate} \itemsep2pt
\item[(i)] the map $F_{p,q}$ has the link lifting property with respect to $\mathcal{T}$ (see Definition \ref{defn: cone lifting property}),
\item[(ii)] $F_{p,q}(\lk_{K^{\partial}(M)_{p,q}}(\zeta)) \leq \lk_{L(\mathcal{W}^{\partial}_{p,q}(M, \partial_{1}M)}(F_{p,q}(\zeta))$ for any simplex $\zeta \in K^{\partial}(M)_{p,q}$.
\end{enumerate}
We begin by verifying property (i).
Let $f: \mb{W} \longrightarrow \mathcal{W}^{\partial}_{p,q}(M, \partial_{1}M)$ be a morphism of Wall forms, which we consider to be a vertex of $L(\mathcal{W}^{\partial}_{p,q}(M, \partial_{1}M))$. 
Let $\phi_{1} \dots, \phi_{k} \in K^{\partial}(M)_{p,q}$ be a collection of vertices in general position with respect to $\mathcal{T}$, such that $F_{p,q}(\phi_{i}) \in \lk_{L(\mathcal{W}^{\partial}_{p,q}(M, \partial_{1}M))}(f)$ for $i = 1, \dots, k$.
Let $\psi_{1}, \dots, \psi_{m} \in K^{\partial}(M)_{p,q}$ be another arbitrary collection of vertices. 
To show that $F_{p,q}$ has the link lifting property with respect to $\mathcal{T}$, 
we will construct a vertex $\phi \in K^{\partial}(M)_{p,q}$ with $F_{p, q}(\phi) = f$, such that $\phi_{i} \in \lk_{K^{\partial}(M)_{p,q}}(\phi)$ for $i = 1, \dots, k$ and $(\phi_{j}, \phi) \in \mathcal{T}$ for $j = 1, \dots, m$.

Let 
$
\zeta: (D^{p+1}, S^{p}) \longrightarrow (M, \partial_{1}M)$ and $\xi: S^{q} \longrightarrow \partial_{1}M
$
be embeddings that represent the classes
$$f_{-}(a) \in \pi_{p+1}(M, \partial_{1}M) \quad \text{and} \quad f_{+}(b) \in \pi_{q}(\partial_{1}M)$$ 
respectively (it follows from Lemma \ref{lemma: basic embedding lemma} that these classes may be represented by embeddings). 
Since the cores $\phi_{1}(B_{p,q}), \dots, \phi_{k}(B_{p,q})$ are transverse and $F_{p,q}(\phi_{i}) \in \lk_{K^{\partial}(M)_{p,q}}(f)$ for all $i = 1, \dots, k$, we may apply Corollary \ref{corollary: inductive whitney trick} and Corollary \ref{corollary: inductive relative whitney trick} 
to deform the embeddings $\zeta$ and $\xi$ through isotopies to new embeddings $\zeta'$ and $\xi'$ such that the images $\zeta'(D^{p+1})$ and $\xi'(S^{q})$ are disjoint from the cores $\phi_{i}(B_{p,q})$ for $i = 1, \dots, k$. 
Let $\partial_{1}M'$ denote the complement $\partial_{1}M\setminus\left(\cup_{i=1}^{k}\phi_{i}(C_{p,q})\right)$.
Since $\lambda_{p,q}(f_{-}(a), f_{+}(b)) = 1$, we may apply the \textit{Whitney trick} to deform $\xi': S^{q} \longrightarrow \partial_{1}M'$, through an isotopy of embeddings into $\partial_{1}M'$, to a new embedding $\xi'': S^{q} \longrightarrow \partial_{1}M'$ with the property that $\zeta'(\partial D^{p+1})$ and $\xi''(S^{q})$ intersect transversally at exactly one point in $\partial_{1}M'$. 
By \textit{Thom's transversality theorem} we may further arrange $\zeta'(D^{p+1})$ and $\xi''(S^{q})$ to be transverse to each of the cores $\psi_{1}(B_{p,q}), \dots, \psi_{m}(B_{p,q})$ while keeping them disjoint from $\phi_{1}(B_{p,q}), \dots, \phi_{k}(B_{p,q})$.

It follows by the above construction that the pair of subspaces
$$\left(\zeta'(D^{p+1})\cup\xi''(S^{q}), \; \zeta'(\partial D^{p+1})\cup\xi''(S^{q})\right)$$
is homeomorphic to the pair $(D^{p+1}\vee S^{q},\; S^{p}\vee S^{q})$.
Now, both $\zeta'(S^{p})$ and $\xi''(S^{q})$ have trivial normal bundles in $\partial_{1}M$; the normal bundle of $\xi''(S^{q})$ is trivial because $\alpha_{q}(f_{+}(b)) = 0$ and the normal bundle of $\zeta'(S^{p})$ is trivial because it bounds the disk $(\zeta'(D^{p+1}), \; \zeta'(S^{p})) \subset (M, \partial_{1}M)$ which must have trivial normal bundle since the disk is contractible.
Let $U \subset \partial_{1}M$ be a regular neighborhood of $\zeta'(S^{p})\cup\xi''(S^{q})$ (which is a wedge of a $p$-sphere with a $q$-sphere).
The manifold $U$ is diffeomorphic to the manifold obtained by forming the push-out of the digram 
$$
\xymatrix{
S^{p}\times D^{q}  && D^{p}\times D^{q} \ar[ll]_{i_{p}\times\Id_{D^{q}}} \ar[rr]^{\Id_{D^{p}}\times i_{q}} && D^{p}\times S^{q},
}
$$
where $i_{p}: D^{p} \hookrightarrow S^{p}$ and $i_{q}: D^{q} \hookrightarrow S^{q}$ are embeddings. 
It is easily seen that this push-out is diffeomorphic to $W_{p,q} = S^{p}\times S^{q}\setminus\Int(D^{p+q})$ after smoothing corners, thus we have a diffeomorphism $U \cong W_{p,q}$.
By shrinking $U$ down arbitrarily close to its ``core''  $\zeta'(S^{p})\cup\xi'(S^{q}) \cong S^{p}\vee S^{p}$, we may assume that $U$ is 
disjoint from $\phi_{i}(C_{p,q})$ for all $i = 1, \dots, k$. 

Let $\bar{U} \subset M$ be the submanifold diffeomorphic to $U\times[0,1]$ obtained by adding a collar to $U \subset \partial_{1}M$ in $M$. 
We extend the embedding $\zeta'$ to an embedding 
$
\bar{\zeta}: (D^{p+1}\times D^{q}, \; S^{p}\times D^{q}) \; \longrightarrow (M, \partial_{1}M)
$
such that $\bar{\zeta}(S^{p}\times D^{q}) \subset U$ and $\bar{\zeta}|_{S^{p}\times\{0\}} = \zeta'$.
We 
let $V \subset M$ be the subspace obtained by forming the union of $\bar{U}$ with $\bar{\zeta}(D^{p+1}\times D^{q})$. 
By shrinking $\bar{\zeta}(D^{p+1}\times D^{q})$ down to $\bar{\zeta}(D^{p+1}\times\{0\})$, we may assume that $V$ is again disjoint from $\phi_{i}(B_{p,q})$ for all $i = 1, \dots, k$.
By Proposition \ref{proposition: recognition of handlebody} (proven below) there is a diffeomorphism $V \cong V_{p,q} = D^{p+1}\times S^{q}$.
Furthermore, the boundary of $V$ has the decomposition $\partial V = \partial_{0}V\cup\partial_{1}V$, with
$\partial_{1}V = \partial V\cap\partial_{1}M$ and $\partial_{0}V = \partial V\setminus\Int(\partial_{1}V).$
We have diffeomorphisms
$\partial_{1}V \cong W_{p,q}$ and $\partial_{0}V \cong D^{p+q}.$
Using these identifications 
$$V \cong V_{p,q}, \quad \partial_{0}V \cong D^{p+q}, \quad \text{and} \quad \partial_{1}V \cong W_{p,q},$$ 
we obtain an embedding $(V_{p,q}, W_{p,q}) \hookrightarrow (M, \partial_{1}M)$ with image equal to 
$(V, \partial_{1}V) \subset (M, \partial_{1}M)$.

We then choose an embedding $\gamma: [0, 1] \hookrightarrow \partial_{1}M$, disjoint from $\phi_{i}(B_{p,q})$ for all $i = 1, \dots, k$, and with $\gamma(0) \in \partial_{0,1}V$  and $\gamma(1) \in\partial_{0}M$.
Taking the union of a thickening of this arc with $V$ yields an embedding 
$(\widehat{V}_{p,q}, \widehat{W}_{p,q}) \longrightarrow (M, \partial_{1}M)$
that satisfies condition i. of Definition \ref{defn: the embedding complex}. 
This in turn yields a vertex $\phi \in K^{\partial}(M)_{p,q}$ with $F_{p,q}(\phi) = f$ such that $\phi(B_{p,q})\cap\phi_{i}(B_{p,q}) = \emptyset$ for all $i = 1, \dots, k$. 
It follows that $\phi_{i}$ is contained in the link of $\phi$ for $i = 1, \dots, k$.
By construction, $\phi(B_{p,q})$ is transverse to $\psi_{j}(B_{p,q})$ for all $j = 1, \dots, m$.
This proves that the map $F_{p,q}$ has the link lifting property with respect to $\mathcal{T}$.

By Lemma \ref{lemma: link lift lemma} it follows that the induced map $\pi_{i}(|K^{\partial}(M)_{p,q}|) \longrightarrow \pi_{i}(|L(\mathcal{W}^{\partial}_{p,q}(M, \partial_{1}M)|)$ is injective for all $i < \tfrac{1}{2}(g - 1 - d)$ and thus $|K^{\partial}(M)_{p,q}|)$ is $\tfrac{1}{2}(g - 4 - d)$-connected. 
In order to conclude that $lCM(K^{\partial}(M)_{p,q}) \geq \tfrac{1}{2}(g - 1 - d)$, we need to establish property (ii).
We need to verify that, 
$$F(\textstyle{\lk_{K^{\partial}(M)_{p,q}}}(\zeta)) \leq \textstyle{\lk_{L(\mathcal{W}^{\partial}_{p,q}(M, \partial_{1}M)}}(F_{p,q}(\zeta))$$ 
for any simplex $\zeta \in K^{\partial}(M)_{p,q}.$
This property follows immediately from the fact that if $\phi_{1}, \phi_{2} \in K^{\partial}(M)_{p,q}$ are such that $\phi_{1}(B_{p,q})\cap\phi_{2}(B_{p,q}) = \emptyset$, then the morphisms of Wall forms $F_{p,q}(\phi_{1})$ and $F_{p,q}(\phi_{2})$ are orthogonal.
This concludes the proof of the theorem. 
\end{proof}

There is one claim in the above proof that still needs verification, namely that the manifold $V$ that we constructed is diffeomorphic to $V_{p,q}$.
We now prove that claim.
Pick a base point $(a, b) \in S^{p}\times S^{q}$ such that $(S^{p}\times\{b\})\cup(\{a\}\times S^{q}) \subset W_{p,q} = S^{p}\times S^{q}\setminus\Int(D^{p+q})$.
Let $f: S^{p} \longrightarrow W_{p,q}$ be the embedding given by the chain of inclusions,
$
S^{p} \hookrightarrow S^{p}\times\{b\} \hookrightarrow (S^{p}\times\{b\})\cup(\{a\}\times S^{q}) \hookrightarrow W_{p,q}.
$
This embedding has a trivial normal bundle.
Let $f': S^{p}\times D^{q} \longrightarrow W_{p,q}$ be an embedding with $f'|_{S^{p}\times\{0\}} = f$. 
Finally let 
$$\bar{f}: S^{p}\times D^{q} \longrightarrow W_{p,q}\times[0,1]$$ 
denote the embedding given by, 
$
S^{p}\times D^{q} \stackrel{f'} \longrightarrow  W_{p,q} \hookrightarrow W_{p,q}\times\{1\} \hookrightarrow W_{p,q}\times[0,1].
$
Let $V$ denote the manifold obtained by attaching the handle $D^{p+1}\times D^{q}$ to $W_{p,q}\times[0,1]$ along the embedding $\bar{f}$, i.e.\
$$
V \; = \; \left(W_{p,q}\times[0,1]\right)\bigcup_{\bar{f}}\left(D^{p+1}\times D^{q}\right).
$$
The boundary of $W_{p,q}$ has the decomposition,
$
\partial V = (W_{p,q}\times\{0\})\cup(\partial W_{p,q}\times[0,1])\cup W',
$
where $W'$ is the manifold obtained from $W_{p,q}$ by performing surgery along the embedding $\bar{f}$. 
Consequentially the manifold $W'$ is diffeomorphic to a disk $D^{p+q}$ and so $\partial V$ is diffeomorphic to $W_{p,q}$.
The following proposition was used in the above proof of Theorem \ref{theorem: high-connectivity}. 
\begin{proposition} \label{proposition: recognition of handlebody}
Let $p, q \in \N$ satisfy the inequality $|q - p| < \min\{p, q\} - 2$. 
Then the manifold $V$ constructed above is diffeomorphic to $D^{p+1}\times S^{q} = V_{p,q}$.
\end{proposition}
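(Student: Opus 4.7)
The plan is to recognize $V$ via handle cancellation. First, fix the standard handle decomposition of $W_{p,q}=S^{p}\times S^{q}\setminus\Int(D^{p+q})$, inherited from the product handle structure on $S^{p}\times S^{q}$ minus the top cell: a single $0$-handle, a $p$-handle $D^{p}\times D^{q}$, and a $q$-handle $D^{q}\times D^{p}$. By construction the core of the $p$-handle, extended across the $0$-handle, is exactly the embedded sphere $S^{p}\times\{b\}\subset W_{p,q}$, which is the image of $f$. Crossing with $[0,1]$ promotes each $k$-handle to a $k$-handle in dimension $p+q+1$, yielding $W_{p,q}\times[0,1]\cong h^{0}\cup h^{p}\cup h^{q}$ with $h^{p}=D^{p}\times D^{q+1}$ and $h^{q}=D^{q}\times D^{p+1}$.

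Next I would apply Smale's handle cancellation theorem to the pair $(h^{p},H^{p+1})$, where $H^{p+1}:=D^{p+1}\times D^{q}$ is the handle attached via $\bar{f}$. The attaching sphere of $H^{p+1}$ is $f(S^{p})=S^{p}\times\{b\}$, which by construction is the extension of the core of $h^{p}$, so after a small perturbation it meets the co-core $\{0\}\times D^{q+1}$ of $h^{p}$ transversely in a single point. A preliminary handle rearrangement is needed so that $H^{p+1}$ is attached immediately after $h^{p}$; this uses the dimensional inequality $|q-p|<\min\{p,q\}-2$ to ensure that whichever of the attaching sphere of $H^{p+1}$ or the attaching sphere of $h^{q}$ must be pushed off the cocore of the other has sufficient codimension for the isotopy to exist. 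Cancellation then gives
\[
V\;\cong\;h^{0}\cup h^{p}\cup h^{q}\cup H^{p+1}\;\cong\;h^{0}\cup\tilde{h}^{q},
\]
for a $q$-handle $\tilde{h}^{q}$ attached to $\partial h^{0}\cong S^{p+q}$.

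Finally, since $h^{q}$ carries the trivial product framing inherited from the handle decomposition of $S^{p}\times S^{q}$ and the cancellation diffeomorphism is supported in a neighborhood of the canceling pair (disjoint from the attaching data of $h^{q}$), the surviving $\tilde{h}^{q}$ is trivially framed with unknotted attaching sphere. Hence $V\cong D^{p+q+1}\cup_{\mathrm{std}}(D^{q}\times D^{p+1})$, which via the decomposition $S^{q}=D^{q}_{+}\cup_{S^{q-1}}D^{q}_{-}$ is canonically $D^{p+1}\times S^{q}=V_{p,q}$. The main obstacle is the middle step: one must verify that the handle rearrangement is valid in the given codimension and, after canceling, that the resulting $q$-handle's attaching data is isotopic (including framing) to the standard one. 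The dimensional inequality is exactly what secures both of these facts.
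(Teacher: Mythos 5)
Your proposal takes a genuinely different route from the paper. The paper does not touch handle calculus at all: it constructs an embedding $\bar{g}: D^{p+1}\times S^{q}\to\Int(V)$ as a tubular neighborhood of $\{a\}\times S^{q}\times\{\tfrac{1}{2}\}$, observes that $\bar{g}$ is a homotopy equivalence, checks by excision and Lefschetz duality that the complement $X = V\setminus\Int(\bar{g}(D^{p+1}\times S^{q}))$ has vanishing relative homology from both sides, and invokes the $h$-cobordism theorem (legal since $p+q+1\geq 6$ and everything in sight is simply connected) to conclude that $X$ is a product, so $\bar{g}$ is isotopic to a diffeomorphism. That argument is short and avoids every issue of rearrangement and framing.

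The gap in your argument is precisely where you flag it, but the dimensional inequality does not automatically close it. To cancel $h^{p}$ against $H^{p+1}$ you must first make the two handles consecutive, i.e.\ slide $H^{p+1}$ past $h^{q}$. The standard reordering criterion is that the attaching sphere of $H^{p+1}$ (dimension $p$) can be isotoped off the belt sphere of $h^{q}$ (also dimension $p$, since $h^{q}$ has index $q$ in an ambient $(p+q+1)$-manifold) inside a level set of dimension $p+q$. General position requires $2p < p+q$, i.e.\ $p<q$, which is \emph{not} implied by $|q-p|<\min\{p,q\}-2$. When $p\geq q$ you have to appeal instead to the specific geometry (the two spheres can in fact be arranged disjointly here) or to a Whitney-trick argument using simple connectivity and the vanishing of the relevant algebraic intersection number; neither of these is a one-line dimension count. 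Your phrasing ``whichever of the attaching sphere of $H^{p+1}$ or the attaching sphere of $h^{q}$ must be pushed off the cocore of the other'' is not the correct dichotomy: reordering two consecutive handles is governed by a single condition (attaching sphere of the later one versus belt sphere of the earlier one), and only one of the two directions is available here. The concluding framing/unknottedness verification is similarly asserted rather than proved. The underlying plan is sound and could be made rigorous, but as written the proof does not establish the rearrangement or the final identification of the surviving handle, which are exactly the points the paper's $h$-cobordism argument renders moot.
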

\begin{proof}
Let $g: S^{q} \longrightarrow V$ be the embedding given by the chain of inclusions,
$$
S^{q} \hookrightarrow \{a\}\times S^{q} \hookrightarrow W_{p,q} \hookrightarrow W_{p,q}\times\{\tfrac{1}{2}\} \hookrightarrow W_{p,q}\times[0,1] \hookrightarrow V.
$$
This embedding has trivial normal bundle and so extends to an embedding, 
$\bar{g}: D^{p+1}\times S^{q} \longrightarrow \Int(V).$ 
Furthermore, $\bar{g}$ induces an isomorphism on homology and thus is a homotopy equivalence since $V$ is simply connected. 
Let $X$ denote the complement $V \setminus \Int(\bar{g}(D^{p+1}\times S^{q}))$. 
The boundary of $X$ decomposes as the disjoint union $\partial X = \partial V \;\sqcup\; \partial\bar{g}(D^{p+1}\times S^{q})$.
By excision we have the isomorphism 
$0 = H_{i}(V, \bar{g}(D^{p+1}\times S^{q})) \; \cong \; H_{i}(X, \partial\bar{g}(D^{p+1}\times S^{q}))$ for all $i \in \Z_{\geq 0}$,
and then by \textit{Lefschetz duality} we obtain 
$$0 \; = \; H_{i}(X, \partial\bar{g}(D^{p+1}\times S^{q})) \; \cong \; H^{i}(X, \partial V) = 0 \quad \text{for all $i \in \Z_{\geq 0}$.}$$ 
Since $X$ and $\partial V$ are simply connected, it follows that $X$ is an $h$-cobordism between $\partial V$ and the manifold $\partial\bar{g}(D^{p+1}\times S^{q})$. 
Since $p+q+1 \geq 6$, it follows by the \textit{H-cobordism theorem} that $X$ is diffeomorphic to the cylinder $\partial V\times[0,1]$. 
By shrinking down this cylinder, it follows that the embedding $\bar{g}: D^{p+1}\times S^{q} \longrightarrow V$ is isotopic to a diffeomorphism $D^{p+1}\times S^{q} \cong V$. 
This completes the proof of the proposition.
\end{proof}
With the above proposition established, the proof of Theorem \ref{theorem: high-connectivity} is now complete. 
By Corollary \ref{corollary: high connectivity of main flag complex}, it now follows that the geometric realization $|\bar{K}^{\partial}(M)_{p,q}|$ is $\tfrac{1}{2}[g-4-d(\pi_{q}(S^{p}))]$-connected whenever $r_{p,q}(M) \geq g$.

\section{Homological Stability} \label{section: homological stability}
With our main technical result Theorem \ref{theorem: high-connectivity} established, in this section we show how this theorem implies our homological stability theorem, Theorem \ref{theorem: main homological stability theorem}.
The constructions and arguments in this section are essentially the same as what was done in \cite[Section 6]{GRW 14} (or the earlier unpublished preprint \cite[Section 5]{GRW 12}) and so we merely provide an outline while referring the reader to those above mentioned papers for details. 
\subsection{A Model for $\BDiff(M, \partial_{0}M)$.} \label{subsection: semi-simplicial resolution}
Let $(M; \partial_{0}M, \partial_{1}M)$ be a compact manifold triad of dimension $m$ with $\partial_{0}M$ and $\partial_{1}M$ non-empty. 
We construct a concrete model for $\BDiff(M, \partial_{0}M)$. 
 Fix once and for all an embedding, 
 $\theta: (\partial_{0}M, \partial_{0,1}M)  \longrightarrow (\R^{\infty}_{+}, \; \partial\R^{\infty}_{+})$
 and let $(S, \partial S)$ denote the submanifold pair, $\left(\theta(\partial_{0}M), \; \theta(\partial_{0,1}M)\right) \subset (\R^{\infty}_{+}, \partial\R^{\infty}_{+})$.
\begin{definition} \label{defn: moduli M} 
 We define
$\mathcal{M}(M)$ to be the set of compact $m$-dimensional submanifold triads 
$$
(M'; \partial_{0}M', \partial_{1}M') \;  \subset \; \left([0,\infty)\times\R^{\infty}_{+}; \; \{0\}\times\R^{\infty}_{+}, \; [0, \infty)\times\partial \R^{\infty}_{+}\right)
$$
such that:
\begin{enumerate} \itemsep.1cm
\item[(i)] $(\partial_{0}M', \partial_{0,1}M') = (S, \partial S)$;
\item[(ii)] $(M',\;  \partial_{1}M')$ contains $([0, \epsilon)\times S, \; [0, \varepsilon)\times\partial S)$ for some $\epsilon > 0$;
\item[(iii)] $(M'; \partial_{0}M', \partial_{1}M')$ is diffeomorphic to $(M; \partial_{0}M, \partial_{1}M)$.
\end{enumerate}
Denote by $\mathcal{E}(M)$ the space of embeddings, 
$(M; \partial_{0}M, \partial_{1}M)  \longrightarrow \left([0,\infty)\times\R^{\infty}_{+}; \; \{0\}\times\R^{\infty}_{+}, \; [0, \infty)\times\partial\R^{\infty}_{+}\right),$
topologized in the $C^{\infty}$-topology.
The space $\mathcal{M}(M)$ is topologized as a quotient of the space $\mathcal{E}(M)$ where two embeddings are identified if they have the same image. 
\end{definition}
It follows from Definition \ref{defn: moduli M} that $\mathcal{M}(M)$ is equal to the orbit space, $\mathcal{E}(M)/\Diff(M, \partial_{0}M)$.
By \cite[Lemma A.1]{P 15}, it follows that the quotient map $\mathcal{E}(M) \longrightarrow \mathcal{E}(M)/\Diff(M, \partial_{0}M) = \mathcal{M}(M)$
is a locally trivial fibre-bundle. 
In \cite{G 12} it is proven that the space $\mathcal{E}(M)$ is weakly contractible.
These two facts together imply that there is a weak-homotopy equivalence,
$\mathcal{M}(M) \simeq \BDiff(M, \partial_{0}M),$
and thus we may take the space $\mathcal{M}(M)$ to be a model for the classifying space of the diffeomorphism group $\Diff(M, \partial_{0}M)$.

Now let $p$ and $q$ be positive integers with $p + q + 1 = m$.
Recall from Section \ref{section: Preliminary Constructions} the relative cobordism, $(K_{p,q}, \partial_{1}K_{p,q}): (\partial_{0}M\times\{0\}, \partial_{0,1}M\times\{0\}) \rightsquigarrow (\partial_{0}M\times\{1\}, \partial_{0,1}M\times\{1\})$.
Choose an embedding, 
$$\alpha: (K_{p,q}; \; \partial_{0}K_{p,q}, \; \partial_{1}K_{p,q}) \; \longrightarrow \; ([0,1]\times\R^{\infty}_{+}; \; \{0,1\}\times\R^{\infty}_{+}, \; [0,1]\times\partial\R^{\infty}_{+}),$$ 
that satisfies $\alpha(i, x) = (i, \theta(x))$ for all $(i, x) \in \{0,1\}\times\partial_{0}M = \partial_{0}K_{p,q}$.
For a submanifold $M' \subset [0,\infty)\times\R^{\infty}_{+}$, denote by 
$M' + e_{1} \subset [1,\infty)\times\R^{\infty}_{+}$
the submanifold obtained by translating $M'$ over $1$-unit in the first coordinate. 
For $M' \in \mathcal{M}(M)$, the submanifold 
$\alpha(K_{p,q})\cup(M'\cup e_{1}) \; \subset \; [0,\infty)\times\R^{\infty}_{+}$
is an element of the space $\mathcal{M}(M\cup_{\partial_{0}}K_{p,q})$, and thus we have a continuous map,
\begin{equation} \label{ref: p-stabilization map}
s_{p,q}: \mathcal{M}(M) \longrightarrow \mathcal{M}(M\cup_{\partial_{0}}K_{p,q}); \quad M' \mapsto \alpha(K_{p,q})\cup(M' + e_{1}).
\end{equation}
The construction of $s_{p,q}$ depends on the choice of embedding $\alpha$. 
Any two such embeddings are isotopic and thus it follows that the homotopy class of $s_{p,q}$ does not depend on any such choice.  
Using the weak homotopy equivalence $\mathcal{M}(M) \simeq \BDiff(M, \partial_{0}M)$, it follows that the homotopy class $s_{p,q}$ agrees with the homotopy class of the stabilization map, $\BDiff(M, \partial_{0}M) \longrightarrow \BDiff(M\cup_{\partial_{0}}K_{p,q}, \partial_{0}M)$ defined in Section \ref{section: Preliminary Constructions}.

\subsection{A Semi-Simplicial Resolution}
Let $(M; \partial_{0}M, \partial_{1}M)$ be an $m$-dimensional manifold triad as in Section \ref{subsection: semi-simplicial resolution}. 
Fix $p, q \in \N$ such that $p+q+1 = m$. 
We now construct a semi-simplicial resolution of the moduli space $\mathcal{M}(M)$.
Choose once and for all a coordinate patch $c: (\R^{m-1}_{+}, \; \partial\R^{m-1}_{+}) \hookrightarrow (S, \; \partial S)$. 
 Such a coordinate patch induces for each $M' \in \mathcal{M}(M)$ a germ of an embedding 
 $[0, 1)\times\R^{m-1}_{+} \; \longrightarrow \; M'$ 
 as in the definition of $\bar{K}^{\partial}_{\bullet}(M')_{p,q}$ (see Definition \ref{defn: the embedding complex (semi simp)}). 
For each non-negative integer $l$, we define $X^{\partial}_{l}(M)_{p,q}$ to be the set of pairs $(M', \phi)$ where $M' \in \mathcal{M}(M)$ and $\phi \in \bar{K}^{\partial}_{l}(M')_{p,q}$.
The space $X^{\partial}_{l}(M)_{p,q}$ is topologized as the quotient, 
$$X^{\partial}_{l}(M)_{p,q} = (\mathcal{E}(M)\times \bar{K}^{\partial}_{l}(M)_{p,q})/\Diff(M, \partial_{0}M).$$
The assignments $[l] \mapsto X^{\partial}_{l}(M)_{p,q}$ make $X^{\partial}_{\bullet}(M)_{p,q}$ into a semi-simplicial space where the face maps are induced by the face maps in $\bar{K}^{\partial}_{\bullet}(M)_{p,q}$.
The forgetful maps 
$X^{\partial}_{l}(M)_{p,q}  \longrightarrow \mathcal{M}(M),$ $(M', \phi) \mapsto M',$
assemble to yield the augmented semi-simplicial space, 
$X^{\partial}_{\bullet}(M)_{p,q} \longrightarrow X^{\partial}_{-1}(M)_{p,q},$
where the space $X^{\partial}_{-1}(M)_{p,q}$ is set equal to $\mathcal{M}(M)$. 
We have the following proposition:
\begin{proposition} \label{prop: highly connected resolution}
The augmentation map $|X^{\partial}_{\bullet}(M)_{p,q}| \longrightarrow X^{\partial}_{-1}(M)_{p,q}$ is $\frac{1}{2}(r_{p,q}(M) - 2 - d)$-connected, where $d = d(\pi_{p}(S^{q}))$ is the generating set length.  
\end{proposition}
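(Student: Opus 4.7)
The plan is to exhibit the augmented semi-simplicial space $X^{\partial}_{\bullet}(M)_{p,q} \to \mathcal{M}(M)$ as arising, up to weak equivalence, from a Borel construction that in each simplicial degree admits a natural fibration over $\mathcal{M}(M)$ with fibre $\bar{K}^{\partial}_{l}(M)_{p,q}$. High connectivity of the augmentation will then follow from the high connectivity of the geometric realization of the fibre together with the long exact sequence of a fibration.

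First, I would rewrite the level sets in the Borel form
\[
X^{\partial}_{l}(M)_{p,q} \;=\; \bigl(\mathcal{E}(M)\times \bar{K}^{\partial}_{l}(M)_{p,q}\bigr)\big/\Diff(M,\partial_{0}M),
\]
where $\Diff(M,\partial_{0}M)$ acts diagonally. Since $\mathcal{E}(M)$ is weakly contractible (by \cite{G 12}) and carries a free $\Diff(M,\partial_{0}M)$-action with locally trivial quotient $\mathcal{M}(M)$ (by the bundle lemma cited in Section \ref{subsection: semi-simplicial resolution}), this identifies $X^{\partial}_{l}(M)_{p,q}$ with the homotopy orbit space $\bar{K}^{\partial}_{l}(M)_{p,q}/\!/\Diff(M,\partial_{0}M)$. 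Thus for each $l\geq 0$ the projection $X^{\partial}_{l}(M)_{p,q} \to \mathcal{M}(M)$ is a fibration with fibre $\bar{K}^{\partial}_{l}(M)_{p,q}$, and the face maps are fibrewise.

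Next, I would pass to geometric realizations. Since the face maps are fibre bundle maps over $\mathcal{M}(M)$, the resulting augmented semi-simplicial object $X^{\partial}_{\bullet}(M)_{p,q}\to\mathcal{M}(M)$ is levelwise a fibration, and a standard argument (as in \cite[Section 6]{GRW 14}) shows that taking geometric realization preserves the fibration up to weak equivalence, yielding a homotopy fibre sequence
\[
|\bar{K}^{\partial}_{\bullet}(M)_{p,q}| \;\longrightarrow\; |X^{\partial}_{\bullet}(M)_{p,q}| \;\longrightarrow\; \mathcal{M}(M).
\]

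Finally, by Corollary \ref{corollary: high connectivity of main flag complex} the fibre $|\bar{K}^{\partial}_{\bullet}(M)_{p,q}|$ is $\tfrac{1}{2}(r_{p,q}(M)-4-d)$-connected, where $d=d(\pi_{q}(S^{p}))$. The long exact sequence of a fibration then promotes this by one: the augmentation map is $\tfrac{1}{2}(r_{p,q}(M)-4-d)+1=\tfrac{1}{2}(r_{p,q}(M)-2-d)$-connected, as claimed. The main obstacle I anticipate is the technical verification that geometric realization commutes (in the appropriate range) with taking the homotopy fibre over $\mathcal{M}(M)$ — i.e.\ that the levelwise fibrations above assemble to an honest quasi-fibration after realization. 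This is precisely the content that in \cite{GRW 14} is handled by verifying that the relevant semi-simplicial space is Reedy-cofibrant and that the structure maps are fibrewise, so I would invoke that framework rather than reprove it.
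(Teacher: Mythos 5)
Your proof is correct and follows essentially the same route as the paper: identify each $X^{\partial}_{l}(M)_{p,q} \to \mathcal{M}(M)$ as a fibration (in the paper, a locally trivial fibre bundle) with fibre $\bar{K}^{\partial}_{l}(M)_{p,q}$, realize to get a fibration with fibre $|\bar{K}^{\partial}_{\bullet}(M)_{p,q}|$, invoke Corollary \ref{corollary: high connectivity of main flag complex}, and run the long exact sequence. Your worry about realization commuting with the homotopy fibre is more caution than the situation requires: since $X^{\partial}_{\bullet}(M)_{p,q} = (\mathcal{E}(M)\times\bar{K}^{\partial}_{\bullet}(M)_{p,q})/\Diff(M,\partial_{0}M)$ with $\mathcal{E}(M)$ a fixed contractible free $\Diff$-space, geometric realization commutes directly with the product and the free quotient, so $|X^{\partial}_{\bullet}(M)_{p,q}| \cong (\mathcal{E}(M)\times|\bar{K}^{\partial}_{\bullet}(M)_{p,q}|)/\Diff(M,\partial_{0}M)$ is an honest locally trivial fibre bundle over $\mathcal{M}(M)$ with fibre $|\bar{K}^{\partial}_{\bullet}(M)_{p,q}|$, no quasi-fibration or Reedy machinery needed. (You also silently correct a typo in the statement: $d$ should be $d(\pi_{q}(S^{p}))$, as used throughout the paper, not $d(\pi_{p}(S^{q}))$.)
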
 
\begin{proof}
For each $l \in \Z_{\geq 0}$ the forgetful map $X^{\partial}_{l}(M)_{p,q}  \longrightarrow \mathcal{M}(M)$ is a locally trivial fibre bundle with fibre given by the space $\bar{K}^{\partial}_{l}(M)_{p,q}$.
It follows that $|X^{\partial}_{\bullet}(M)_{p,q}| \longrightarrow X^{\partial}_{-1}(M)_{p,q}$ is a locally trivial fibre bundle as well with fibre given by $|\bar{K}^{\partial}_{\bullet}(M)_{p,q}|$.
The proposition then follows from Corollary \ref{corollary: high connectivity of main flag complex} using the long exact sequence on homotopy groups associated to a fibre-sequence. 
\end{proof}

\subsection{Proof of Theorem \ref{theorem: main homological stability theorem}} \label{section: homological stability final}
We now will show how to use the augmented semi-simplicial space  
$X^{\partial}_{\bullet}(M)_{p,q} \rightarrow X^{\partial}_{-1}(M)_{p,q}$
to complete the proof of Theorem \ref{theorem: main homological stability theorem}.
First, we fix some new notation which will make the steps of the proof easier to state.
For what follows let $(M; \partial_{0}M, \partial_{1}M)$ be a compact $m$-dimensional manifold triad with non-empty boundary. 
As in the previous sections, choose positive integers $p$ and $q$ with $p + q + 1 = m$ that satisfy the inequalities (\ref{equation: fundamental inequalities}) with respect to $(M, \partial_{1}M)$.
We work with the same choice of $p$ and $q$ for the rest of the section.
For each $g \in \N$ we denote by $M_{g}$ the manifold obtained by forming the boundary connected-sum of $M$ with $(D^{p+1}\times S^{q})^{\# g}$ along the face $\partial_{1}M$.  
Clearly we have $r_{p,q}(M_{g}) \geq g$. 
We consider the spaces $\mathcal{M}(M_{g})$.
For each $g \in \N$ we have the stabilization map 
$s_{p,q}: \mathcal{M}(M_{g}) \longrightarrow \mathcal{M}(M_{g+1})$.
Theorem \ref{theorem: main homological stability theorem} translates to the following statement:
\begin{theorem} \label{thm: main theorem neq notation}
The induced map
$(s_{p,q})_{*}: H_{k}(\mathcal{M}(M_{g}); \Z) \longrightarrow H_{k}(\mathcal{M}(M_{g+1}); \Z)$
is an isomorphism when $k \leq \frac{1}{2}(g - 3 - d)$ and an epimorphism when $k \leq \frac{1}{2}(g - 1 - d)$.
\end{theorem}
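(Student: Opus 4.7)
The plan is to run the standard spectral sequence / induction argument of Galatius--Randal-Williams \cite{GRW 14} applied to the augmented semi-simplicial space $X^{\partial}_{\bullet}(M_{g})_{p,q} \to \mathcal{M}(M_{g})$, using Proposition \ref{prop: highly connected resolution} as the sole geometric input. Since $r_{p,q}(M_{g}) \geq g$, that proposition says the augmentation is $\tfrac{1}{2}(g-2-d)$-connected, and so the associated spectral sequence
\[
E^{1}_{s,t} \;=\; H_{t}(X^{\partial}_{s}(M_{g})_{p,q}; \Z) \;\Longrightarrow\; H_{s+t}(\mathcal{M}(M_{g}); \Z)
\]
converges within the stable range.

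First I would identify each level $X^{\partial}_{s}(M_{g})_{p,q}$ (up to weak equivalence) with $\mathcal{M}(M_{g-s-1})$. A point of $X^{\partial}_{s}(M_{g})_{p,q}$ is a manifold $M' \in \mathcal{M}(M_{g})$ together with an ordered $(s+1)$-tuple of disjoint embeddings $\phi_{0}, \ldots, \phi_{s}: \widehat{V}_{p,q} \hookrightarrow M'$ with the boundary behaviour of Definition \ref{defn: the embedding complex}. Cutting out the union of the images $\bigsqcup_{i} \phi_{i}(\widehat{V}_{p,q})$ produces an element of $\mathcal{M}(M_{g-s-1})$, and the forgetful-plus-cut map
\[
X^{\partial}_{s}(M_{g})_{p,q} \;\longrightarrow\; \mathcal{M}(M_{g-s-1})
\]
is a fibre bundle whose fibre is the space of ordered disjoint collars on a boundary disk, which is contractible by the standard parametrised isotopy extension / contractibility-of-embeddings argument used in \cite[Section 6]{GRW 14}. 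Under this identification, the face operator $d_{i}$ which forgets $\phi_{i}$ corresponds, up to homotopy, to the $(p,q)$-stabilization map $s_{p,q}: \mathcal{M}(M_{g-s-1}) \to \mathcal{M}(M_{g-s})$, because re-attaching the excised handlebody along the boundary is precisely Construction \ref{construction: the stabilization map} applied to $(K_{p,q}, \partial_{1}K_{p,q})$.

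With these identifications in place, I would carry out induction on $g$. Assume Theorem \ref{thm: main theorem neq notation} for all $g' < g$. On the $E^{1}$-page, the vertical differential $d^{1}: E^{1}_{s,t} \to E^{1}_{s-1,t}$ is the alternating sum of $s+1$ copies of $(s_{p,q})_{*}$, which by parity equals $(s_{p,q})_{*}$ for $s$ even and $0$ for $s$ odd. Using the inductive hypothesis to control iso/epi ranges of $(s_{p,q})_{*}$ on the lower-$g$ columns, a diagram chase identical to \cite[Section 6]{GRW 14} (following the Harer/GRW template) identifies the $E^{2}$-page with zero in the required range except along the edge $s = -1$, and then convergence forces $(s_{p,q})_{*}: H_{k}(\mathcal{M}(M_{g})) \to H_{k}(\mathcal{M}(M_{g+1}))$ to be the claimed iso/epi for $k \leq \tfrac{1}{2}(g - 3 - d)$ resp.\ $k \leq \tfrac{1}{2}(g - 1 - d)$.

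The genuinely substantive work has already been done in proving Theorem \ref{theorem: high-connectivity} and Proposition \ref{prop: highly connected resolution}; what remains here is bookkeeping. The main potential obstacle is verifying the first step cleanly: one must show that the map $X^{\partial}_{s}(M_{g})_{p,q} \to \mathcal{M}(M_{g-s-1})$ really is a fibration with contractible fibres, which requires a careful parametrised isotopy extension argument for embeddings of $\widehat{V}_{p,q}$ that respect the boundary conditions of Definition \ref{defn: the embedding complex}, and matching the combinatorial face maps with the geometric stabilization map of Section \ref{section: Preliminary Constructions}. Once this is done, the remainder is a direct citation of the spectral sequence argument in \cite[Section 6]{GRW 14}.
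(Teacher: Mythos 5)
Your proposal follows the same overall template as the paper: use Proposition \ref{prop: highly connected resolution} to get high connectivity of the augmented resolution, identify the levels of $X^{\partial}_{\bullet}(M_g)_{p,q}$ with the lower-genus moduli spaces $\mathcal{M}(M_{g-s-1})$, identify the alternating sum of face maps with $(s_{p,q})_*$ by parity, and then cite the standard GRW inductive spectral sequence chase. This is the paper's strategy, and the paper likewise defers the spectral sequence bookkeeping to \cite{GRW 12} and \cite{GRW 14}.

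The one place where your route genuinely diverges from the paper's is in how the identification $X^{\partial}_{s}(M_g)_{p,q} \simeq \mathcal{M}(M_{g-s-1})$ is produced, and here your version has a real imprecision. You propose a ``cut-out'' map $X^{\partial}_{s}(M_g)_{p,q} \to \mathcal{M}(M_{g-s-1})$ obtained by deleting $\bigsqcup_i \phi_i(\widehat{V}_{p,q})$ from $M'$, and you assert it is a fibre bundle with contractible fibre (which you describe, rather vaguely, as a space of ordered disjoint collars). But deletion does not produce an element of $\mathcal{M}(M_{g-s-1})$ as defined in Definition \ref{defn: moduli M}: that space consists of submanifolds of $[0,\infty)\times\R^{\infty}_+$ whose $\partial_0$-face is \emph{exactly} the fixed pair $(S,\partial S)$, whereas the new $\partial_0$-boundary created by excision sits somewhere inside $M'$ and is only abstractly diffeomorphic to the standard model. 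So the map you write down is not yet well-defined, and describing its fibre presupposes having solved that problem. The paper avoids this entirely by running the identification in the opposite direction: it constructs explicit maps $F^{g}_{k}: \mathcal{M}(M_{g-k-1}) \to X^{\partial}_{k}(M_g)_{p,q}$ by sliding $M'$ by $(k+1)\cdot e_1$ and gluing on a fixed standardly-embedded cobordism $K^{k}_{p,q}$ equipped with a fixed $k$-simplex $(\zeta_0,\dots,\zeta_k)$, and Proposition \ref{prop: homotopy commutativity}(i) asserts these maps are weak equivalences. The commuting square in (\ref{equation: stabilization -face map diagram}) together with Proposition \ref{prop: homotopy commutativity}(ii) (all face maps weakly homotopic) then yields the identification of the $E^1$-differentials exactly as you describe. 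You do flag this issue yourself as ``the main potential obstacle,'' which is accurate; the fix is to go the gluing direction rather than the cutting direction, and the rest of your argument then goes through unchanged.
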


Since $r_{p,q}(M_{g}) \geq g$,
Proposition \ref{prop: highly connected resolution} implies that the map 
$|X^{\partial}_{\bullet}(M_{g})_{p,q}| \longrightarrow X^{\partial}_{-1}(M_{g})_{p,q}$ is $\frac{1}{2}(g - 2-d)$-connected. 
For each pair of integers $g, k \in \Z_{\geq 0}$ with $k < g$, there is a map
\begin{equation} \label{eq: resolution level map}
F^{g}_{k}: \mathcal{M}(M_{g-k-1}) \longrightarrow X^{\partial}_{k}(M_{g})_{p,q}
\end{equation}
defined as follows.
Let $K^{k}_{p,q} \subset [0, k+1]\times\R^{\infty}_{+}$ denote the $(k+1)$-fold concatenation of the submanifold $\alpha(K_{p,q}) \subset [0, 1]\times\R^{\infty}_{+}$ used in the construction of stabilization map (\ref{ref: p-stabilization map}), i.e.
$$
K^{k}_{p,q} \; = \; \alpha(K_{p,q})\cup[\alpha(K_{p,q}) + e_{1}]\cup\cdots\cup [\alpha(K_{p,q}) + k\cdot e_{1}].
$$
For each $k \in \Z_{\geq 0}$ we fix a $k$-simplex $(\zeta_{0}, \dots, \zeta_{k}) \in \bar{K}^{\partial}(K^{k}_{p,q})_{p,q}$.
The map $F^{g}_{k}$ from (\ref{eq: resolution level map}) is defined by the formula,
$
F^{g}_{k}(M') \; = \; \left((\zeta_{0}, \dots, \zeta_{k}), \; \; K^{k}_{p, q}\cup[(k+1)\cdot e_{1} + M']\right).
$
It follows directly from the definition of $F^{g}_{k}$ that for each pair $k < g$, the diagram 
\begin{equation} \label{equation: stabilization -face map diagram}
\xymatrix{
\mathcal{M}(M_{g-k-1}) \ar[d]^{F^{g}_{k}} \ar[rr]^{S_{p,q}} && \mathcal{M}(M_{g-k}) \ar[d]^{F^{g}_{k}} \\
X^{\partial}_{k}(M_{g})_{p,q} \ar[rr]^{d_{k}} && X^{\partial}_{k-1}(M_{g})_{p,q}
}
\end{equation}
is commutative. 
The following proposition is proven in the same way as \cite[Propositions 5.3 and 5.5]{GRW 12}.
For this reason we omit the proof and refer the reader to these analogous results from \cite{GRW 12} and \cite{GRW 14} for details. 
\begin{proposition} \label{prop: homotopy commutativity} Let $g \geq 4 + d$. We have the following:
\begin{enumerate} \itemsep.2cm
\item[(i)] For each $k < g$, the map $F^{g}_{k}: \mathcal{M}(M_{g-k-1}) \longrightarrow X^{\partial}_{k}(M_{g})_{p,q}$ is a weak homotopy equivalence. 
\item[(ii)] The face maps $d_{i}: X^{\partial}_{k}(M_{g})_{p,q} \longrightarrow X^{\partial}_{k-1}(M_{g})_{p,q}$ are weakly homotopic. 
\end{enumerate}
\end{proposition}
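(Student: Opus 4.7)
The plan is to adapt the arguments of \cite[Propositions 5.3, 5.5]{GRW 12} and the analogous results in \cite[Section 6]{GRW 14} to the present relative setting. The two parts require rather different geometric input: (i) is a rigidification of the moduli of handle-configurations, while (ii) is a combinatorial reordering that exploits the extra room in $[0,\infty)\times\R^{\infty}_{+}$.

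For part (i), I would use the quotient presentation
$$X^{\partial}_{k}(M_{g})_{p,q} \; = \; (\mathcal{E}(M_{g})\times\bar{K}^{\partial}_{k}(M_{g})_{p,q})/\Diff(M_{g},\partial_{0}M_{g})$$
together with the weak contractibility of $\mathcal{E}(M_{g})$ from \cite{G 12} to identify $X^{\partial}_{k}(M_{g})_{p,q}$ with the homotopy orbit $\bar{K}^{\partial}_{k}(M_{g})_{p,q} /\!\!/\Diff(M_{g},\partial_{0}M_{g})$. The map $F^{g}_{k}$ picks out the orbit of the standard simplex $(\zeta_{0},\ldots,\zeta_{k})$ sitting inside the fixed cobordism $K^{k}_{p,q}$. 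The proof of (i) then breaks into two steps. First, I would apply a parametrized isotopy-extension argument to show that any family of $k$-simplices in $\bar{K}^{\partial}_{k}(M_{g})_{p,q}$ can be brought into standard position by a compatible family of ambient diffeomorphisms of $M_{g}$; the hypothesis $r_{p,q}(M_{g})\geq g\geq k+1$ ensures that the complement of any standard configuration continues to contain enough handle-summands for this to work. Second, I would identify the stabilizer of the standard simplex: since the complement in $M_{g}$ of a regular neighbourhood of $\bigcup_{i}\zeta_{i}(\widehat{V}_{p,q})$ is diffeomorphic to $M_{g-k-1}$ with matching $\partial_{0}$-face, the stabilizer is weakly equivalent to $\Diff(M_{g-k-1},\partial_{0}M_{g-k-1})$. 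Combining these gives $X^{\partial}_{k}(M_{g})_{p,q}\simeq\BDiff(M_{g-k-1},\partial_{0}M_{g-k-1})\simeq\mathcal{M}(M_{g-k-1})$, with $F^{g}_{k}$ realizing the equivalence.

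For part (ii), I would use (i) together with the commutativity of diagram (\ref{equation: stabilization -face map diagram}) to first conclude $d_{k}\circ F^{g}_{k}=F^{g}_{k-1}\circ S_{p,q}$, so that $d_{k}$ is determined up to weak homotopy by the stabilization map. It then suffices to show $d_{i}\circ F^{g}_{k}\simeq d_{k}\circ F^{g}_{k}$ for each $i<k$. A direct calculation shows that both compositions produce the same underlying manifold $K^{k}_{p,q}\cup[(k+1)e_{1}+M'']\in\mathcal{M}(M_{g})$, equipped with different choices of which $k$ of the $k+1$ handles $\zeta_{j}$ to remember. I would then connect the two choices by an explicit path in $X^{\partial}_{k-1}(M_{g})_{p,q}$ that cyclically permutes the handles: lift one handle out of its $t$-slot into a fresh coordinate direction of $\R^{\infty}_{+}$, slide it past the neighbouring handles, and drop it back into its new $t$-slot. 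Because $\R^{\infty}_{+}$ is infinite-dimensional there is always an unused direction for such a slide, and the intermediate configurations satisfy the disjointness and $t$-ordering constraints throughout.

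The main obstacle will be the parametrized isotopy-extension arguments in part (i): one must carry out the identification of both the orbit and its stabilizer continuously in families over an arbitrary parameter space, not merely on $\pi_{0}$. In particular, identifying the stabilizer of the standard handle configuration with $\Diff(M_{g-k-1},\partial_{0}M_{g-k-1})$ requires showing that the space of standard-position embeddings of the $(k+1)$-handle configuration into $M_{g}$ is the total space of a principal fibre bundle with the correct homotopy type. The required analytic work is carried out in \cite[Section 6]{GRW 14}, and should translate to the present relative setting with only cosmetic adjustments to accommodate the non-empty face $\partial_{1}M$.
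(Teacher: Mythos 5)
The paper does not supply its own proof of this proposition; it defers to Propositions 5.3 and 5.5 of \cite{GRW 12} (and Section 6 of \cite{GRW 14}). Your sketch captures the correct geometric content and is in the spirit of those arguments, but two steps need repair. In part (i), the $\Diff(M_{g},\partial_{0}M_{g})$-action cannot be transitive on $\bar{K}^{\partial}_{k}(M_{g})_{p,q}$ as you assert: each diffeomorphism is the identity near $\partial_{0}M_{g}$, and the germ condition $\phi_{i}(s,z)=a(s,z+t_{i}e_{2})$ of Definition~\ref{defn: the embedding complex} forces $f\circ\phi_{i}$ to have the same $t_{i}$ as $\phi_{i}$, so simplices with distinct $t$-data lie in distinct orbits. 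You must first contract away the $t$-parameters (the ordered $(k+1)$-point configuration space of $\R$ is an open convex set, hence contractible) before the orbit--stabilizer identification applies. Moreover, the parametrized transitivity on a fixed-$t$ slice is exactly where the hypothesis $g\geq 4+d$ and the high connectivity of Theorem~\ref{theorem: high-connectivity} enter; the inequality $r_{p,q}(M_{g})\geq g\geq k+1$ you invoke is not, on its own, the reason the orbit is connected.

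In part (ii), ``lift one handle out of its $t$-slot into a fresh coordinate direction of $\R^{\infty}_{+}$'' does not make sense as stated: the handles $\zeta_{j}$ are submanifolds of $K^{k}_{p,q}\cup[(k+1)e_{1}+M']$, and a path in $X^{\partial}_{k-1}(M_{g})_{p,q}$ must keep each handle inside the ambient manifold (or move the manifold with it). Any genuine slide has to be performed \emph{inside} the manifold, along the collar of $\partial_{0}$, using that each $\zeta_{j}(\widehat{V}_{p,q})$ is a boundary-connect-summand. Closer to what the paper is pointing to, and cleaner: having established (i), identify each face map $d_{j}$ with a map $\mathcal{M}(M_{g-k-1})\to\mathcal{M}(M_{g-k})$ given by regluing the $j$-th handle along a relative cobordism; then observe that any two such regluings yield homotopic stabilization maps because the relevant space of embedded handles in $K^{k}_{p,q}$ is path-connected, another consequence of the connectivity results you already have.
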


To finish the proof of Theorem \ref{thm: main theorem neq notation}, consider the spectral sequence associated to the augmented semi-simplicial space $X^{\partial}_{\bullet}(M_{g})_{p,q} \rightarrow X^{\partial}_{-1}(M_{g})_{p,q}$
with $E^{1}$-term given by 
$E^{1}_{j,l} = H_{j}(X^{\partial}_{l}(M_{g})_{p,q}; \; \Z)$ for $l \geq -1$ and $j \geq 0$.
The differential is given by $d^{1} = \sum(-1)^{i}(d_{i})_{*}$, where $(d_{i})_{*}$ is the map on homology induced by the $i$th face map in $X^{\partial}_{\bullet}(M_{g})_{p,q}$. 
The group $E^{\infty}_{j, l}$ is a subquotient of the relative homology group 
$$H_{j+l+1}(X^{\partial}_{-1}(M_{g})_{p,q}, |X^{\partial}_{\bullet}(M_{g})_{p,q}|; \; \Z).$$
Proposition \ref{prop: homotopy commutativity} together with Corollary \ref{prop: highly connected resolution} and commutativity of diagram (\ref{equation: stabilization -face map diagram}) imply the following facts:
\begin{enumerate} \itemsep.2cm
\item[(a)] For $g \geq 4 + d$, there are isomorphisms $E^{1}_{j,l} \cong H_{l}(\mathcal{M}(M_{g-j-1});\; \Z).$
\item[(b)] The differential
$d^{1}: H_{l}(\mathcal{M}(M_{g-j-1}); \; \Z) \cong E^{1}_{j,l} \longrightarrow E^{1}_{j-1,l} \cong H_{l}(\mathcal{M}(M_{g-j}); \; \Z)$ is equal to 
$(s_{p,q})_{*}$ when $j$ is even and is equal to zero when $j$ is odd.
\item[(c)] The term $E^{\infty}_{j,l}$ is equal to $0$ when $j +l \leq \frac{1}{2}(g-2-d)$. 
\end{enumerate}
To complete the proof one uses (c) to prove that the differential $d^{1}: E^{1}_{2j,l} \longrightarrow E^{1}_{2j-1,l}$ is an isomorphism 
when $0 < j \leq \frac{1}{2}(g - 3 - d)$  and an epimorphism when $0 < j \leq \frac{1}{2}(g -1 - d)$. 
This is done by carrying out the exact inductive argument given in \cite[Section 5.2: \textit{Proof of Theorem 1.2}]{GRW 12}. This establishes Theorem \ref{thm: main theorem neq notation} and the main result of this paper, Theorem \ref{theorem: main homological stability theorem}.

\section{Tangential Structures} \label{section: tangential structures}
In this section we prove an analogue of Theorem \ref{theorem: main theorem 1} for the moduli spaces of manifolds equipped with tangential structures. 
Recall that a tangential structure is a map $\theta: B \longrightarrow BO(d)$. 
A $\theta$-structure on a $d$-dimensional manifold $M$ is a bundle map (fibrewise linear isomorphism) $\ell: TM \longrightarrow \theta^{*}\gamma^{d}$. 
More generally, if $M$ is an $l$-dimensional manifold with $l \leq d$, then a $\theta$-structure on $M$ is a bundle map $TM\oplus\epsilon^{d-l} \longrightarrow \theta^{*}\gamma^{d}$. 

Fix a tangential structure $\theta: B \longrightarrow BO(d)$. 
Let $M$ be a $d$-dimensional manifold with boundary.
Let $P \subset \partial M$ be a codimension-$0$ submanifold and let $\ell_{P}: TP\oplus\epsilon^{1} \longrightarrow \theta^{*}\gamma^{d}$ be a $\theta$-structure. 
We define  
$$\Bun(TM, \theta^{*}\gamma^{d}; \ell_{P}) \subset \Bun(TM, \theta^{*}\gamma^{d})$$ 
to be the subspace consisting of those $\theta$-structures on $M$ that agree with $\ell_{P}$ when restricted to $P$. 
The formula, 
$
\Bun(TM, \theta^{*}\gamma^{d}; \ell_{P})\times\Diff(M, P) \; \longrightarrow \; \Bun(TM, \theta^{*}\gamma^{d}; \ell_{P}), \quad (\ell, f)$,  $\mapsto \; \ell\circ Df,
$
defines a continuous action of the topological group $\Diff(M, P)$ on the space $\Bun(TM, \theta^{*}\gamma^{d}; \ell_{P})$.
We define $\BDiff_{\theta}(M, \ell_{P})$ to be the \textit{homotopy quotient}, $\Bun(TM, \theta^{*}\gamma^{d}; \ell_{P})//\Diff(M, P)$.

We proceed to construct stabilization maps analogous to those defined in Section \ref{section: Preliminary Constructions}. 
This will require us to make some choices. 
Let $p, q \in \Z_{\geq 0}$ be integers such that $p + q + 1 = d$. 
\begin{definition} \label{defn: canonical theta structure}
Fix once and for all a bundle map $\tau: \R^{d} \longrightarrow \theta^{*}\gamma^{d}$. 
This choice determines a canonical $\theta$-structure on any framed $d$-dimensional manifold. 
If $X$ is any such framed $d$-dimensional manifold, we denote this canonical $\theta$-structure on $X$ by $\ell^{\tau}_{X}$. 
Choose once and for all a framing, $TV^{1}_{p,q} \cong V^{1}_{p,q}\times\R^{p+q+1},$
and consider the canonical $\theta$-structure, $\ell^{\tau}_{V^{1}_{p,q}}$, induced by this chosen framing. 
We call a $\theta$-structure $\ell: TV^{1}_{p,q} \longrightarrow \theta^{*}\gamma^{d}$ \textit{standard} if it is homotopic to the canonical $\theta$-structure $\ell^{\tau}_{V^{1}_{p,q}}$. 
\end{definition}

Let $(M; \partial_{0}M , \partial_{1}M)$ be a $d$-dimensional manifold triad with $\partial_{0}M$ and $\partial_{1}M$ non-empty. 
Let $(K_{p,q}; \partial_{1}K_{p,q})$ be the relative cobordism between 
$(\partial_{0}M\times\{0\}, \partial_{0,1}M\times\{0\})$ and $(\partial_{0}M\times\{1\}, \partial_{0,1}M\times\{1\})$, introduced in Section \ref{section: Preliminary Constructions}.
Let us denote $P := \partial_{0}M$ and fix a $\theta$-structure $\ell_{P}: TP\oplus\epsilon^{1} \longrightarrow \theta^{*}\gamma^{d}$. 
Choose a $\theta$-structure $\ell_{K_{p,q}}: TK_{p,q} \longrightarrow \theta^{*}\gamma^{d}$ that agrees with $\ell_{P}$ on both components of $\partial_{0}K_{p,q} = \partial_{0}M\times\{0,1\}$, and that is standard (in the sense of Definition \ref{defn: canonical theta structure}) when restricted to $V^{1}_{p,q}$ (where $V^{1}_{p,q}$ is considered as a submanifold of $K_{p,q} = (\partial_{0}M\times[0,1])\natural V^{1}_{p,q})$. 
With this choice of $\theta$-structure we obtain a map,
$$
\Bun(TM, \theta^{*}\gamma^{d}; \ell_{P}) \; \longrightarrow \; \Bun(T(M\cup_{P}K_{p,q}), \theta^{*}\gamma^{d}; \ell_{P}), \quad \ell \; \mapsto \; \ell\cup\ell_{K_{p,q}}.
$$
This map is $\Diff(M, P)$-equivariant, and thus it induces a map, 
\begin{equation} \label{equation: theta stabilization}
s^{\theta}_{p,q}: \textstyle{\BDiff_{\theta}}(M, \ell_{P}) \; \longrightarrow \; \textstyle{\BDiff_{\theta}}(M\cup K_{p,q}, \ell_{P}). 
\end{equation}
In addition to the inequalities imposed on $p$ and $q$ in the statement of Theorem \ref{theorem: main homological stability theorem}, the following theorem will require us to also impose the further condition, $q \leq p$, and to assume that $\theta: B \longrightarrow BO(d)$ is such that the space $B$ is $q$-connected.   
\begin{theorem} \label{theorem: main homological stability theorem theta structure}
Let $p$ and $q$ be positive integers with $p + q +1 = d = \dim(M)$ and suppose that the inequalities of (\ref{equation: fundamental inequalities}) are satisfied. 
Suppose further that $q \leq p$ and that $\theta: B \longrightarrow BO(d)$ is such that $B$ is $q$-connected. 
Suppose that $r_{p,q}(M) \geq g$. 
Then the homomorphism,
$$\xymatrix{
(s^{\theta}_{p, q})_{*}: H_{k}(\BDiff_{\theta}(M, \ell_{P}); \; \Z) \longrightarrow H_{k}(\BDiff_{\theta}(M\cup_{P}K_{p,q}, \; \ell_{P}); \; \Z),
}$$
is an isomorphism when $k \leq \tfrac{1}{2}(g - 4)$ and an epimorphism when $k \leq \tfrac{1}{2}(g - 2)$.
\end{theorem}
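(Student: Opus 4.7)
The plan is to mimic the proof of Theorem \ref{theorem: main homological stability theorem} in the $\theta$-structured setting. Replace the moduli space $\mathcal{M}(M)$ of Section \ref{subsection: semi-simplicial resolution} with a $\theta$-structured version $\mathcal{M}_\theta(M, \ell_P)$ consisting of pairs $(M', \ell_{M'})$ where $M' \subset [0,\infty) \times \R^\infty_+$ is as in Definition \ref{defn: moduli M} and $\ell_{M'}$ is a $\theta$-structure on $M'$ agreeing with $\ell_P$ on a neighborhood of $\partial_0 M'$. The same argument using the contractibility of the embedding space shows $\mathcal{M}_\theta(M, \ell_P) \simeq \BDiff_\theta(M, \ell_P)$, and the $\theta$-structure $\ell_{K_{p,q}}$ chosen before equation (\ref{equation: theta stabilization}) induces a stabilization map $s_{p,q}^\theta$ on this model agreeing (up to homotopy) with (\ref{equation: theta stabilization}).

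Next, build a $\theta$-structured semi-simplicial resolution $X^{\partial,\theta}_\bullet(M, \ell_P)_{p,q} \to \mathcal{M}_\theta(M, \ell_P)$ whose $l$-simplices are tuples $((M', \ell_{M'}), (t_0, \phi_0, h_0), \ldots, (t_l, \phi_l, h_l))$ where $((t_0,\phi_0),\ldots,(t_l,\phi_l))$ is an $l$-simplex of $\bar{K}^\partial_\bullet(M')_{p,q}$ and each $h_i$ is a path in the space of $\theta$-structures on $\widehat{V}_{p,q}$ (rel the face meeting $\partial_0 M'$) from the pullback $\phi_i^*\ell_{M'}$ to the standard $\theta$-structure $\ell^\tau_{\widehat{V}_{p,q}}$. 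The forgetful map to $\mathcal{M}_\theta(M, \ell_P)$ is a locally trivial fibre bundle whose fibre is the geometric realisation of a semi-simplicial space $\bar{K}^{\partial,\theta}_\bullet(M', \ell_{M'})_{p,q}$.

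The key step is establishing high connectivity of $|\bar{K}^{\partial,\theta}_\bullet(M', \ell_{M'})_{p,q}|$. The forgetful map to $|\bar{K}^\partial_\bullet(M')_{p,q}|$ is a levelwise fibration whose fibre over a simplex is a product of path spaces between $\theta$-structures on $\widehat{V}_{p,q}$ relative to $\partial_0 \widehat{V}_{p,q}$. Since $\widehat{V}_{p,q}$ deformation retracts onto a wedge $S^p \vee S^q$ relative to this face, and we have assumed $q \leq p$ and that $B$ is $q$-connected, standard obstruction theory shows these path spaces are non-empty; moreover, the cells of dimension $\leq p$ contribute only path components controlled by $\pi_i(B) = 0$ for $i \leq q \leq p$, so each such path space is weakly contractible. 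Combined with Corollary \ref{corollary: high connectivity of main flag complex} and the fact that $q \leq p$ forces $\pi_q(S^p) = 0$ when $q < p$ (and the calculation may be carried out carefully in the $q=p$ case), one concludes that $|\bar{K}^{\partial,\theta}_\bullet(M', \ell_{M'})_{p,q}|$ is at least $\tfrac{1}{2}(g-4)$-connected whenever $r_{p,q}(M) \geq g$.

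Finally, repeat the spectral sequence argument of Section \ref{section: homological stability final} verbatim with $M_g$ replaced by the triad $(M_g, \ell_{P})$ and with the semi-simplicial space $X^{\partial,\theta}_\bullet(M_g, \ell_P)_{p,q}$. The $\theta$-structured analogues of Proposition \ref{prop: homotopy commutativity}(i)--(ii) hold by the same constructions, identifying $X^{\partial,\theta}_k(M_g, \ell_P)_{p,q} \simeq \mathcal{M}_\theta(M_{g-k-1}, \ell_P)$ and showing that the face maps are all weakly homotopic. The spectral sequence collapse then yields the claimed isomorphism for $k \leq \tfrac{1}{2}(g-4)$ and epimorphism for $k \leq \tfrac{1}{2}(g-2)$. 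The main obstacle is the obstruction-theoretic verification that the fibres of $|\bar{K}^{\partial,\theta}_\bullet(M', \ell_{M'})_{p,q}| \to |\bar{K}^\partial_\bullet(M')_{p,q}|$ are sufficiently connected; the hypothesis that $B$ is $q$-connected combined with $q \leq p$ is exactly what is needed so that no new obstructions to rectifying $\theta$-structures on the embedded handles appear in the relevant range of degrees.
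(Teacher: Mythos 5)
Your overall architecture (a $\theta$-structured moduli space $\mathcal{M}_\theta$, a semi-simplicial resolution, connectivity of the fibre, then the spectral sequence of Section \ref{section: homological stability final}) matches the paper's strategy, but the key technical step---high connectivity of $|\bar{K}^{\partial,\theta}_{\bullet}(M', \ell_{M'})_{p,q}|$---is justified by an argument that does not work.

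You claim that the forgetful map $|\bar{K}^{\partial,\theta}_{\bullet}(M', \ell_{M'})_{p,q}| \to |\bar{K}^{\partial}_{\bullet}(M')_{p,q}|$ is a levelwise fibration whose fibre over a $0$-simplex $(t,\phi)$ is the space of paths from $\phi^{*}\ell_{M'}$ to $\ell^{\tau}_{\widehat{V}_{p,q}}$ inside $\Bun(T\widehat{V}_{p,q},\theta^{*}\gamma^{d})$, rel the face, and that this path space is non-empty and weakly contractible because $B$ is $q$-connected and $q \leq p$. This is false in general. The pullback structure $\phi^{*}\ell_{M'}$ need not lie in the same path component as $\ell^{\tau}_{\widehat{V}_{p,q}}$, so the fibre may be \emph{empty}. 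Concretely, take $\theta: EO(d) \to BO(d)$ (so $\theta$-structures are framings and $B$ is contractible, hence $q$-connected for every $q$). Since $\widehat{V}_{p,q} \simeq S^{q}$ (not $S^p \vee S^q$, incidentally --- $V_{p,q}=D^{p+1}\times S^q$ retracts to $S^q$), the space of framings on $\widehat{V}_{p,q}$ rel the boundary disk has $\pi_0 \cong \pi_{q}(O(d))$, which is frequently nontrivial. So $B$ being $q$-connected is simply not strong enough to make the space of $\theta$-structures on $\widehat{V}_{p,q}$ path-connected, and the obstruction-theoretic contractibility claim collapses.

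The paper handles exactly this issue differently. Rather than comparing $K^{\partial}(M,\ell_M,a)_{p,q}$ to the unstructured $K^{\partial}(M,a)_{p,q}$ fibre by fibre, it maps directly to the algebraic complex $L(\mathcal{W}^{\partial}_{p,q}(M,\partial_1 M))$ and verifies the link lifting property (Lemma \ref{lemma: link lift lemma}). In the lifting step one is free to precompose an embedding $\phi$ with a diffeomorphism $\varphi \in \Diff(\widehat{V}_{p,q}, D^{p+q}_{+})$, and Lemma \ref{lemma: transitivity of diffeomorphisms} shows that $\Diff(V_{p,q}, D^{p+q})$ acts \emph{transitively on $\pi_0$} of $\Bun(TV_{p,q},\theta^{*}\gamma^{d}; \ell_D)$. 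The hypotheses $q \leq p$ and $\pi_{\leq q}(B)=0$ are used there to show every structure comes from a framing, and that the surjectivity of $\pi_q(SO_{p+1}) \to \pi_q(SO)$ lets one realize the framing difference geometrically. That lemma does not assert, and is strictly weaker than, connectedness of the structure space. Your proof needs to be repaired by replacing the "contractible fibre" argument with a transitivity-of-diffeomorphism-action argument along these lines; otherwise the comparison map to $|\bar{K}^{\partial}_{\bullet}(M')_{p,q}|$ cannot be used to transfer connectivity.
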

The proof of the above theorem is similar to Theorem \ref{theorem: main homological stability theorem} and requires only slight modifications. 
The main ingredient of the proof is to show that the tangentially structured analogue of the simplicial complex $K^{\partial}(M)_{p,q}$ (see Definition \ref{defn: the embedding complex theta} below) is highly-connected relative to the rank $r_{p,q}(M)$. 
With this high-connectivity established, the proof of Theorem \ref{theorem: main homological stability theorem theta structure} follows in exactly the same way as the proof of Theorem \ref{theorem: main homological stability theorem}, as outlined in Section \ref{section: homological stability}. 
We will show explicitly how to prove high-connectivity of the complex (Proposition \ref{theorem: highly connected complex theta}), and refer the reader to \cite[Section 7]{GRW 14} for the rest of the argument, which by this point is standard.

We proceed to construct a simplicial complex (and related semi-simplicial spaces) analogous to the one constructed in Section \ref{section: simplicial complexes}. 
Let $P$ and $\ell_{P}: TP\oplus\epsilon^{1} \longrightarrow \theta^{*}\gamma^{d}$ be as in the statement of Theorem \ref{theorem: main homological stability theorem theta structure}. 
Let $(M; \partial_{0}M, \partial_{1}M)$ be a $d$-dimensional compact manifold triad, with $\partial_{0}M = P$. 
Choose a $\theta$-structure $\ell_{M} \in \Bun(TM, \theta^{*}\gamma^{d}; \ell_{P})$.
The definition below should be compared to \cite[Definition 7.10]{GRW 14}.
\begin{definition} \label{defn: the embedding complex theta} 
Let
$a: [0,1)\times\R^{d-1}_{+} \longrightarrow M$ be an embedding with
$a^{-1}(\partial_{0}M) =  \{0\}\times\R^{d-1}_{+}$ and $a^{-1}(\partial_{1}M) =  [0, 1)\times\partial\R^{d-1}_{+}.$
For each pair of positive integers $p$ and $q$ with $p+q+1 = d$, 
we define a simplicial complex
  $K^{\partial}(M, \ell_{M}, a)_{p,q}$ as follows:
\begin{enumerate}  \itemsep.2cm
\item[(i)] 
A vertex in $K^{\partial}(M, \ell_{M}, a)_{p,q}$ is defined to be a triple $(t, \phi, \gamma)$, where
$(t, \phi)$ is an element of $K^{\partial}(M, a)_{p,q}$, and $\gamma$ is a path in $\Bun(\widehat{V}_{p,q}, \theta^{*}\gamma^{d})$, starting at $\phi^{*}\ell_{M}$, ending at $\ell_{\widehat{V}_{p,q}}$, and constant on the subset $D^{p+q}_{+}\times\{0\} \subset \widehat{V}_{p,q}$. 
\item[(ii)] A set of vertices $\{(t_{0}, \phi_{0}, \gamma_{0}), \dots, (t_{l}, \phi_{l}, \gamma_{l})\}$ forms an $l$-simplex 
if 
$t_{i} \neq t_{j}$ and 
$\phi_{i}(B_{p,q}) \cap \phi_{j}(B_{p,q}) = \emptyset$ whenever $i \neq j$, just as in Definition \ref{defn: the embedding complex}.  
\end{enumerate}
 As in Section \ref{section: simplicial complexes} we will also need to work with a semi-simplicial space $K^{\partial}_{\bullet}(M, \ell_{M}, a)_{p,q}$ analogous to the simplicial complex defined above.
\begin{enumerate}  \itemsep.2cm
\item[(i)] 
The space of $0$-simplices $K^{\partial}_{0}(M, \ell_{M}, a)_{p,q}$ is defined to have the same underlying set as the set of vertices of the simplicial complex $K^{\partial}(M, \ell_{M}, a)_{p,q}$. 
\item[(ii)] The space of $l$-simplices, $K^{\partial}_{l}(M, \ell_{M}, a) \subset (K^{\partial}_{0}(M, \ell_{M}, a))^{l+1}$ consists of the ordered $(l+1)$-tuples, 
$$((t_{0}, \phi_{0}, \gamma_{0}), \dots, (t_{l}, \phi_{l}, \gamma_{l})),$$
such that $t_{0} < \cdots < t_{l}$ and $\phi_{i}(B_{p,q})\cap\phi_{j}(B_{p,q}) = \emptyset$ when $i \neq j$.
\end{enumerate}
The spaces $K^{\partial}_{l}(M,  \ell_{M}, a)\subset (\R\times
  \Emb(\widehat{V}_{p,q}, M)\times\Bun(T\widehat{V}_{p,q}, \theta^{*}\gamma^{d}))^{l+1}$ are topologized using the $C^{\infty}$-topology on
the spaces of embeddings and bundle maps.
The assignments $[l] \mapsto K^{\partial}_{l}(M, \ell_{M}, a)$ define a semi-simplicial space denoted by 
$K^{\partial}_{\bullet}(M, \ell_{M}, a)_{p,q}$ with face maps defined the same way as in Definition \ref{defn: the embedding complex (semi simp)}. 

 Finally, the sub-semi-simplicial space
$\bar{K}^{\partial}_{\bullet}(M, \ell_{M}, a)_{p,q} \subset K^{\partial}_{\bullet}(M, \ell_{M}, a)_{p,q}$ is defined to be the sub-semi-simplicial space consisting of all simplices $((t_{0}, \phi_{0}, \gamma_{0}), \dots, (t_{l}, \phi_{l}, \gamma_{l})) \in K^{\partial}_{l}(M, \ell_{M}, a)$ such that $\phi_{i}(\widehat{V}_{p,q}) \cap \phi_{j}(\widehat{V}_{p,q}) = \emptyset$ whenever $i \neq j$.
\end{definition} 

The key technical result that we will need is the lemma stated below. 
This lemma is the source of the requirement that $q \leq p$ and that the space $B$ be $q$-connected.
Fix a $\theta$-structure $\ell_{D}$ on the disk $D^{d-1}$ and fix an embedding $D^{d-1} \hookrightarrow \partial V_{p,q}$.
We consider the space $\Bun(TV_{p,q}, \theta^{*}\gamma^{d}; \ell_{D})$.
\begin{lemma} \label{lemma: transitivity of diffeomorphisms}
Suppose that $q \leq p$ and that $\theta: B \longrightarrow BO(d)$ is chosen so that $B$ is $q$-connected. 
Then given any two elements $\ell_{1}, \ell_{2} \in \Bun(TV_{p,q}, \theta^{*}\gamma^{d}; \ell_{D})$, there exists a diffeomorphism $f \in \Diff(V_{p,q}, D^{p+q})$ such that $\ell_{1}$ is homotopic to $\ell_{2}\circ Df$.
\end{lemma}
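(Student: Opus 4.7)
The plan is to prove the stronger statement that the group $\pi_{0}\Diff(V_{p,q}, D^{d-1})$ acts transitively on the set of path components $\pi_{0}\Bun(TV_{p,q}, \theta^{*}\gamma^{d}; \ell_{D})$, from which the lemma is immediate. The argument will proceed in three stages: identify this set, write down the action formula, and then realize enough diffeomorphisms to witness transitivity.

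First, $V_{p,q} = D^{p+1}\times S^{q}$ is parallelizable (the trivial $D^{p+1}$-direction absorbs the stabilizing line of $TS^{q}$), and the pair $(V_{p,q}, D^{d-1})$ is homotopy equivalent to $(S^{q}, \ast)$ since $D^{d-1}$ is contractible. Fixing a framing $\phi\colon TV_{p,q} \cong V_{p,q}\times\R^{d}$ extending $\ell_{D}$, obstruction theory for the fibration $F \to B \xrightarrow{\theta} BO(d)$ with $F = \hofibre(\theta)$ yields a weak equivalence
\begin{equation*}
\Bun(TV_{p,q}, \theta^{*}\gamma^{d}; \ell_{D}) \;\simeq\; \Maps_{*}\bigl((V_{p,q}, D^{d-1}),\, (F, \ast)\bigr),
\end{equation*}
and hence a bijection $\pi_{0}\Bun \cong \pi_{q}(F)$ of (affine) sets.

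Under this identification, a diffeomorphism $f \in \Diff(V_{p,q}, D^{d-1})$ acts on $[\ell] \in \pi_{q}(F)$ by the formula
\begin{equation*}
[\ell] \;\longmapsto\; f^{*}[\ell] \;+\; \partial\bigl([g_{f}|_{\{0\}\times S^{q}}]\bigr),
\end{equation*}
where $g_{f}\colon V_{p,q} \to O(d)$ is the gauge transformation $\phi\circ Df\circ\phi^{-1}$ and $\partial\colon \pi_{q}(O(d)) \to \pi_{q}(F)$ is the connecting map in the LES of the fibration. Since $f$ is a diffeomorphism that is the identity on $D^{d-1}$, it induces a degree-$\pm 1$ self-equivalence of $V_{p,q}/D^{d-1} \simeq S^{q}$, so $f^{*}$ is $\pm\Id$ on $\pi_{q}(F)$. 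Thus transitivity reduces to surjectivity of the composite $\Diff(V_{p,q}, D^{d-1}) \to \pi_{q}(O(d)) \xrightarrow{\partial} \pi_{q}(F)$. The $q$-connectedness of $B$ forces $\pi_{q}(B) = 0$, so the LES fragment $\pi_{q}(O(d)) \xrightarrow{\partial} \pi_{q}(F) \to \pi_{q}(B) = 0$ makes $\partial$ surjective.

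Finally, every class in $\pi_{q}(O(p+1))$ is realized by an explicit diffeomorphism: given a pointed representative $\alpha\colon S^{q} \to O(p+1)$, define
\begin{equation*}
f_{\alpha}\colon V_{p,q}\to V_{p,q}, \qquad (x, y) \;\longmapsto\; \bigl(\beta_{y}(|x|)\cdot x,\; y\bigr),
\end{equation*}
where $\beta_{y}\colon[0,1]\to O(p+1)$ smoothly interpolates from $\alpha(y)$ at $t=0$ to the identity at $t=1$. Then $f_{\alpha}$ is the identity on $\partial V_{p,q}\supseteq D^{d-1}$, and differentiating at $(0, y)$ shows $[g_{f_{\alpha}}|_{\{0\}\times S^{q}}]$ equals the image of $[\alpha]$ under $\pi_{q}(O(p+1)) \to \pi_{q}(O(d))$. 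Iterating the LES of the fibration $O(n)\to O(n+1)\to S^{n}$ shows this inclusion is surjective on $\pi_{q}$ precisely when $q\leq p$, which together with surjectivity of $\partial$ establishes transitivity and the lemma. The main technical obstacle is verifying the action formula above: it requires carefully tracing how $\ell\mapsto\ell\circ Df$ becomes a gauge action on $\Maps(V_{p,q}, F)$ via the principal $\Omega BO(d) \simeq O(d)$-action on the homotopy fiber $F$.
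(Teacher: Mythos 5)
Your approach is a genuinely different route from the paper's. The paper first uses $q$-connectivity of $B$ to reduce to the statement about \emph{framings} (replacing $\Bun(TV_{p,q},\theta^*\gamma^d;\ell_D)$ by $\Fr(TV_{p,q},\phi_D)$), then exploits the splitting $TV_{p,q}\cong E\oplus N$ and the surjectivity of $\pi_q(SO_{p+1})\to\pi_q(SO)$ (that is where $q\le p$ enters) to write each framing as a twist by a bundle automorphism of the normal bundle $N$, and finally produces the required diffeomorphism as a self-map of the disk bundle $D(N)\cong V_{p,q}$. You instead set up the problem obstruction-theoretically via $\pi_0\Bun\cong\pi_q(\hofibre\theta)$, use the fibration long exact sequence to get surjectivity of $\partial\colon\pi_q(O(d))\to\pi_q(F)$ from $\pi_q(B)=0$, and then realize elements of $\pi_q(O(p+1))$ by explicit twist diffeomorphisms. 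Both proofs use the same two inputs ($q$-connectivity of $B$ and $q\le p$), and your framing of the question as a gauge-theoretic transitivity statement is clean and arguably more transparent about why those hypotheses appear.

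There is, however, a genuine bug in your explicit diffeomorphism. You define $f_\alpha(x,y)=(\beta_y(|x|)\cdot x,\,y)$ with $\beta_y\colon[0,1]\to O(p+1)$ a path from $\alpha(y)$ to $\Id$, so that $f_\alpha$ is the identity on all of $\partial V_{p,q}=S^p\times S^q$. But a smooth (or even continuous) choice of such $\beta_y$ for every $y\in S^q$ is precisely a null-homotopy of $\alpha$; it exists only when $[\alpha]=0\in\pi_q(O(p+1))$, in which case the gauge class $[g_{f_\alpha}|_{\{0\}\times S^q}]$ you extract is also trivial. So as written, $f_\alpha$ either does not exist or realizes only the zero class, and the transitivity argument collapses. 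The fix is to remember that the lemma only asks for $f\in\Diff(V_{p,q},D^{p+q})$ — i.e.\ $f$ need only fix the single boundary disk $D^{d-1}$, not all of $\partial V_{p,q}$. Choose $\alpha\colon S^q\to O(p+1)$ representing the given class and satisfying $\alpha\equiv\Id$ on a disk $D^q\subset S^q$ around the basepoint, and set $f_\alpha(x,y)=(\alpha(y)\cdot x,\,y)$ on all of $D^{p+1}\times S^q$. This is a diffeomorphism, it is the identity on $D^{p+1}\times D^q\supseteq D^{d-1}$, and its gauge transformation along $\{0\}\times S^q$ is exactly $\alpha$; it does \emph{not} fix the rest of $\partial V_{p,q}$, and it must not, since that freedom is what lets a nontrivial class in $\pi_q(O(p+1))$ be realized. (This is also essentially the diffeomorphism the paper produces via the disk bundle $D(N)$.) With that correction, your argument goes through.
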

\begin{proof}
Since the space $B$ is $q$-connected and $V_{p,q}$ is homotopy equivalent to $S^{q}$, it follows that the underlying map $V_{p,q} \longrightarrow B$ of any $\theta$-structure on $V_{p,q}$ is null-homotopic. 
It follows that every $\theta$-structure on $V_{p,q}$ is homotopic to one that is induced by a framing of the tangent bundle. 
That is, every $\ell \in \Bun(TV_{p,q}, \theta^{*}\gamma^{d}; \ell_{D})$ is homotopic to a $\theta$-structure of the form,
$$
\xymatrix{
TV_{p,q} \ar[r]^{\cong \ \ \ \ \ } & V_{p,q}\times\R^{d} \ar[r]^{ \ \ \ \ \text{pr}} & \R^{d} \ar[r]^{\tau} & \theta^{*}\gamma^{d},
}
$$
where the first arrow is a framing of the tangent bundle. 
Fix a framing, 
$\phi_{D}: TD^{d-1}\oplus\epsilon^{1} \stackrel{\cong} \longrightarrow D^{d-1}\times\R^{d}.$ 
Let $\Fr(TV_{p,q}, \phi_{D})$ denote the space of framings of $TV_{p,q}$ that agree with $\ell_{D}$ when restricted to the disk $D^{d-1} \subset \partial V_{p, q}$. 
From the observation made above, to prove the lemma it will suffice to prove the following statement: given any two framings $\varphi_{1}, \varphi_{2} \in \Fr(TV_{p,q}, \phi_{D})$, there exists $f \in \Diff(V_{p,q}, D^{d})$ such that $\varphi_{1}\circ Df$ is homotopic (through framings) to $\varphi_{2}$.
Let $\varphi_{1}, \varphi_{2} \in \Fr(TV_{p,q}, \phi_{D})$.
The tangent bundle $TV_{p,q}$ has a natural splitting $E\oplus N  \cong TV_{p,q}$. 
The bundle $E$ is the pull-back of the tangent bundle $TS^{q}$ over the projection $V_{p,q} \longrightarrow S^{q}$ and $N$ is the pull-back of the normal bundle of $S^{q} \hookrightarrow V_{p,q}$ over the projection $V_{p,q} \longrightarrow S^{q}$.
The bundle $N \longrightarrow V_{p,q}$ is trivial and has fibres of dimension $p+1$. 
Pick once and for all a standard framing, 
$$\phi: TV_{p,q} \stackrel{\cong} \longrightarrow V_{p,q}\times\R^{d}.$$
Since $p \geq q$ by assumption, the stabilization map $\pi_{q}(SO_{p+1}) \longrightarrow \pi_{q}(SO)$ is surjective. 
From this is follows that $\varphi_{i}$ (for $i = 1, 2$) is homotopic to a framing of the form 
\begin{equation} \label{equation: factored framing}
\xymatrix{
TV_{p,q} \ar[r]^{\cong} & E\oplus N \ar[rr]^{\Id_{E}\oplus\widehat{\varphi}_{i}} && E\oplus N \ar[r]^{\cong} & TV_{p,q}  \ar[r]^{\phi \ \ \ \ \ } & V_{p,q}\times\R^{d},
}
\end{equation}
for some bundle isomorphism $\widehat{\varphi}_{i}: N \stackrel{\cong} \longrightarrow N$.  
To prove the proposition it will suffice to show the following:
given two bundle isomorphisms $\alpha_{1}, \alpha_{2}: N \stackrel{\cong} \longrightarrow N$ that agree when restricted to $D^{d-1} \subset \partial V_{p,q}$, there exists $f \in \Diff(V_{p,q}, D^{d-1})$ such that $\alpha_{1}\circ Df = \alpha_{2}$. 
Let $D(N)$ denote the \textit{disk bundle} associated to $N$. 
Let $\bar{\alpha}_{1}, \bar{\alpha}_{2}: D(N) \stackrel{\cong} \longrightarrow D(N)$ be the bundle maps induced by $\alpha_{1}$ and $\alpha_{2}$. 
Choose a diffeomorphism $h: V_{p,q} \stackrel{\cong} \longrightarrow  D(N)$. 
By setting $f$ equal to the diffeomorphism given by, 
$\xymatrix{
V^{1}_{p,q} \ar[r]^{h} & D(N) \ar[rr]^{\bar{\alpha}_{2}\circ\bar{\alpha}_{1}^{-1}} && D(N)  \ar[r]^{h^{-1}} & V^{1}_{p,q},}$
it follows that $\bar{\alpha}_{2}\circ D(f) = \bar{\alpha}_{1}$. 
This concludes the proof of the lemma.
\end{proof}

The main technical ingredient in the proof of Theorem \ref{theorem: main homological stability theorem theta structure} is the following proposition. 
\begin{proposition} \label{theorem: highly connected complex theta}
Let $(M; \partial_{0}M, \partial_{1}M)$ and $p + q + 1 = d$ satisfy the inequalities (\ref{equation: required theorem inequalities}). 
Suppose further that $q \leq p$ and that $\theta: B \longrightarrow BO(d)$ is chosen so that $B$ is $q$-connected. 
Let $r_{p,q}(M) \geq g$. 
Then for any $\ell_{M} \in \Bun(TM, \theta^{*}\gamma^{d}; \ell_{P})$, the geometric realization $|\bar{K}^{\partial}_{\bullet}(M, \ell_{M}, a)_{p,q}|$ is $\tfrac{1}{2}(g - 4)$-connected. 
\end{proposition}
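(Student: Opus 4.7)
The plan is to adapt the proof of Theorem \ref{theorem: high-connectivity} and Corollary \ref{corollary: high connectivity of main flag complex} to the $\theta$-structured setting. First, I introduce the discretization $K^{\partial}_{\bullet}(M, \ell_M)^{\delta}_{p,q}$ and apply the coloring and comparison arguments of \cite[Theorem 5.5, Corollary 5.8]{GRW 14} exactly as in the proof of Corollary \ref{corollary: high connectivity of main flag complex}. This reduces the proposition to showing that the underlying simplicial complex $K^{\partial}(M, \ell_M)_{p,q}$ (vertices as in Definition \ref{defn: the embedding complex theta}(i), simplices as unordered sets) is $\tfrac{1}{2}(g-4)$-connected with $lCM \geq \tfrac{1}{2}(g-1)$.

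Next, I construct a simplicial map $F^{\theta}_{p,q} : K^{\partial}(M, \ell_M)_{p,q} \longrightarrow L^{\theta}$ into a $\theta$-structured algebraic complex. Because $B$ is $q$-connected, the $\pi_{q}(S^{p})$-valued intersection form $\mu_{q}$ of the Wall form $\mathcal{W}^{\partial}_{p,q}(M, \partial_{1}M)$ becomes inert in the $\theta$-setting: after pulling back along a $\theta$-structure, spheres in $\partial_{1}M$ all carry the canonical framing, so the natural target is a variant $L^{\theta}$ built from a Wall form whose effective coefficient group is trivial. The standard connectivity theorem \ref{thm: high connectivity}, applied with $H = 0$ (so $d(H) = 0$), then yields $|L^{\theta}|$ is $\tfrac{1}{2}(g-4)$-connected with $lCM(L^{\theta}) \geq \tfrac{1}{2}(g-1)$, provided the rank of the $\theta$-structured Wall form is at least $g$; this last point follows from $r_{p,q}(M)\geq g$ combined with Definition \ref{defn: canonical theta structure}, since each embedded handle $V^{1}_{p,q}$ carries the standard $\theta$-structure.

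The final ingredient is the link lifting property (Definition \ref{defn: cone lifting property}) for $F^{\theta}_{p,q}$ with respect to the transversality relation of Definition \ref{defn: transversality relation} (lifted from the unstructured setting by forgetting $\gamma$). I would run the geometric construction from the proof of Theorem \ref{theorem: high-connectivity} verbatim to produce an embedding $\phi : \widehat{V}_{p,q} \hookrightarrow M$ whose core is disjoint from the given configuration and transverse to the auxiliary data, using Corollaries \ref{corollary: inductive whitney trick}, \ref{corollary: inductive relative whitney trick} and Proposition \ref{proposition: recognition of handlebody}. The new step is producing a compatible path $\gamma$ of $\theta$-structures from $\phi^{*}\ell_{M}$ to $\ell_{\widehat{V}_{p,q}}$ rel $D^{p+q}_{+}\times\{0\}$. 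Here Lemma \ref{lemma: transitivity of diffeomorphisms} applies: since $q\leq p$ and $B$ is $q$-connected, there exists a diffeomorphism $f$ of $V^{1}_{p,q}$ rel its boundary disk such that $\phi^{*}\ell_{M}\circ Df$ is homotopic rel boundary to the canonical structure $\ell^{\tau}_{V^{1}_{p,q}}$; replacing $\phi$ by $\phi\circ(\Id\sqcup f)$ (which preserves the image and the core, hence all disjointness and transversality conditions) yields $\gamma$. Applying Lemma \ref{lemma: link lift lemma} completes the argument.

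The main obstacle is the compatibility step in paragraph three: I must guarantee that the $\theta$-structure adjustment (the diffeomorphism $f$) does not disturb the delicate intersection-theoretic positioning already achieved for $\phi$. This is handled by noting that $f$ acts only on the $V^{1}_{p,q}$-summand of $\widehat{V}_{p,q}$ rel its entire boundary, and the core $B_{p,q}$ is determined up to isotopy by this boundary data, so the replacement $\phi\mapsto \phi\circ(\Id\sqcup f)$ preserves the image $\phi(B_{p,q})$ up to ambient isotopy; after a small isotopy the disjointness and transversality conditions with the previously chosen cores are preserved. A secondary technical point is to verify that the formal definition of $L^{\theta}$ (whether phrased via $\theta$-structured Wall forms or by enlarging the target with path data in $\Bun$) genuinely admits the Cohen--Macaulay bounds of Theorem \ref{thm: high connectivity} with $d=0$; this reduces, via the $q$-connectivity of $B$, to the observation that all the relevant $\pi_{q}(S^{p})$-twisting obstructions in the Wall form become null.
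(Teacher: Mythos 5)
The core geometric arguments in your proposal --- the discretization/coloring reduction, the link lifting property verified by running the construction from Theorem~\ref{theorem: high-connectivity}, and the use of Lemma~\ref{lemma: transitivity of diffeomorphisms} to produce the path $\gamma$ by precomposing $\phi$ with a diffeomorphism of $\widehat{V}_{p,q}$ rel the boundary half-disk --- all match the paper's proof. Your explicit check that the replacement $\phi \mapsto \phi\circ(\Id\sqcup f)$ does not disturb the disjointness (because the \emph{entire image} $\phi(\widehat{V}_{p,q})$, not just the core, is arranged to be disjoint from the $\phi_i(B_{p,q})$) and that transversality with the auxiliary $\psi_j$ can be restored by a small perturbation is correct and slightly more careful than the paper's prose.

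The gap is in your second paragraph, the construction of a ``$\theta$-structured'' target $L^{\theta}$ with trivialized coefficient group $H=0$. The claim that the $q$-connectivity of $B$ makes the $\pi_q(S^p)$-valued pairing $\mu_q$ of $\mathcal{W}^{\partial}_{p,q}(M,\partial_1 M)$ ``inert'' conflates two unrelated things: the $\theta$-structure is a datum on the tangent bundle of $M$, while $\mu_q$ measures intersection self-linking of embedded $q$-spheres in $\partial_1 M$. A $q$-connected $B$ ensures (via Lemma~\ref{lemma: transitivity of diffeomorphisms}) that any $\theta$-structure on a handle $V^1_{p,q}$ can be made standard after a diffeomorphism, but it does not change the target group $\pi_q(S^p)$ of $\mu_q$, and there is no Wall form with $H=0$ associated to $(M,\partial_1 M)$ in this setting. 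The paper does not introduce any new target complex: the map $F_{p,q}$ simply forgets the path datum $\gamma$ and lands in the \emph{same} complex $L(\mathcal{W}^{\partial}_{p,q}(M,\partial_1 M))$ as in the unstructured case. The numerical conclusion then comes from the elementary observation that $q\leq p$ forces $\pi_q(S^p)$ to be either $0$ or $\Z$, so $d(\pi_q(S^p))\leq 1$, and Theorem~\ref{thm: high connectivity} applies with that value of $d$. Replacing your invented $L^\theta$ by this one-line observation repairs the argument and recovers the paper's proof.
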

\begin{proof}
As before, the degree of connectivity of $|\bar{K}^{\partial}_{\bullet}(M, \ell_{M}, a)_{p,q}|$ is bounded below by the degree of connectivity of $|K^{\partial}(M, \ell_{M}, a)_{p,q}|$. 
To prove the theorem it will suffice to show that $|K^{\partial}(M, \ell_{M}, a)_{p,q}|$ is $\tfrac{1}{2}(g - 4)$-connected.
Let $\mathcal{T} \subset K^{\partial}(M, \ell_{M}, a)_{p,q}\times K^{\partial}(M, \ell_{M}, a)_{p,q}$ be the symmetric relation from Definition \ref{defn: transversality relation}. 
As in Section \ref{section: high connectivity of K} we have a simplicial map 
$F_{p,q}: K^{\partial}(M, \ell_{M}, a)_{p,q} \; \longrightarrow \; L(\mathcal{W}^{\partial}_{p,q}(M, \partial_{1}M))$
defined by sending $(t, \phi, \gamma)$ to the morphism of Wall forms $\mb{W}_{p,q} \longrightarrow \mathcal{W}^{\partial}_{p,q}(M, \partial_{1}M)$ induced by $\phi$. 
We will need to show that the map $F_{p,q}$ has the following properties: 
\begin{enumerate} \itemsep2pt
\item[(i)] the map $F_{p,q}$ has the link lifting property with respect to $\mathcal{T}$ (see Definition \ref{defn: cone lifting property}),
\item[(ii)] $F_{p,q}(\lk_{K^{\partial}(M, \ell_{M}, a)_{p,q}}(\zeta)) \leq \lk_{L(\mathcal{W}^{\partial}_{p,q}(M, \partial_{1}M))}(F_{p,q}(\zeta))$ for any simplex $\zeta \in K^{\partial}(M, \ell_{M}, a)_{p,q}$.
\end{enumerate}
Condition (ii) is proven in the same way as in the proof of Theorem \ref{theorem: high-connectivity}. 
The proof of condition (i) is similar to the proof of Theorem \ref{theorem: high-connectivity} but requires one extra step which we describe below. 
This step will rely on Lemma \ref{lemma: transitivity of diffeomorphisms} and thus requires the conditions that $q \leq p$ and $B$ is $q$-connected. 
Let $f: \mb{W} \longrightarrow \mathcal{W}^{\partial}_{p,q}(M, \partial_{1}M)$ be a morphism of Wall forms, i.e.\ a vertex of $L(\mathcal{W}^{\partial}_{p,q}(M, \partial_{1}M))$. 
Let 
$(\phi_{1}, \gamma_{1}, ), \dots, (\phi_{k}, \gamma_{k}) \; \in \; K^{\partial}(M, \ell_{M}, a)_{p,q}$
be a collection of vertices that is in general position with respect to $\mathcal{T}$, and such that 
$$F_{p,q}(\gamma_{i}, \phi_{i}) \in \lk(f) \quad \text{for $i = 1, \dots, k$,}$$
(we have dropped the numbers $t_{1}, \dots, t_{k}$ from the notation to save space). 
Let $(\phi'_{1}, \gamma'_{1}), \dots, (\phi'_{m}, \gamma'_{m}) \in K^{\partial}(M, \ell_{M}, a)_{p,q}$ be another arbitrary collection of vertices. 
To show that $F_{p,q}$ has the link lifting property with respect to $\mathcal{T}$, 
we need to construct a vertex $(\phi, \gamma) \in K^{\partial}(M, \ell_{M}, a)_{p,q}$ with $F_{p, q}(\phi, \gamma) = f$, such that 
$$(\phi_{i}, \gamma_{i}) \in \lk(\phi, \gamma) \quad \text{and} \quad ((\psi_{j}, \gamma_{j}), (\phi, \gamma)) \in \mathcal{T}$$
for all $i = 1, \dots, k$ and $j = 1, \dots, m$.
By using the same procedure employed in the proof of Theorem \ref{theorem: high-connectivity}, we may construct the embedding $\phi: \widehat{V}_{p,q} \longrightarrow M$ in the same way that was done there.   
However, in order to obtain the path $\gamma: [0, 1] \longrightarrow \Bun(T\widehat{V}_{p,q}, \theta^{*}\gamma^{d})$, we need to use Lemma \ref{lemma: transitivity of diffeomorphisms}. 
Since $q \leq p$ and the space $B$ is $q$-connected, by Lemma \ref{lemma: transitivity of diffeomorphisms} we may find a diffeomorphism $\varphi: \widehat{V}_{p,q} \longrightarrow \widehat{V}_{p,q}$, that is the identity on the half-disk $D^{p+q}_{+} \subset \partial\widehat{V}_{p,q}$, such that the $\theta$-structure $\varphi^{*}\phi^{*}\ell_{M}$ given by
\begin{equation} \label{equation: composition theta phi}
\xymatrix{
T\widehat{V}_{p,q} \ar[r]^{D\varphi} & T\widehat{V}_{p,q} \ar[r]^{D\phi} & TM \ar[r]^{\ell_{M}} & \theta^{*}\gamma^{d},
}
\end{equation}
is on the same path component of $\Bun(T\widehat{V}_{p,q}, \theta^{*}\gamma^{d})$ as the canonical $\theta$-structure $\ell^{\tau}_{\widehat{V}_{p,q}}$. 
Letting $\gamma: [0, 1] \longrightarrow \Bun(T\widehat{V}_{p,q}, \theta^{*}\gamma^{d})$ be a path from $\varphi^{*}\phi^{*}\ell_{M}$ to $\ell^{\tau}_{\widehat{V}_{p,q}}$ it follows that the pair $(\phi\circ\varphi, \gamma)$ is a vertex in the complex $K^{\partial}(M, \ell_{M}, a)_{p,q}$ that satisfies all of the desired conditions. 

The construction of this vertex concludes our verification of the link lifting property.  
It follows from Lemma \ref{lemma: link lift lemma} that the degree of connectivity of $|K^{\partial}(M, \ell_{M}, a)_{p,q}|$ is bounded below by the degree of connectivity of $|L(\mathcal{W}^{\partial}_{p,q}(M, \partial_{1}M))|$.
Since $q \leq p$, $\pi_{q}(S^{p})$ is either isomorphic to $\Z$ or is zero, thus the generating set length $d(\pi_{q}(S^{p}))$ is either equal to $1$ or zero. 
It follows from Theorem \ref{thm: high connectivity} that $|L(\mathcal{W}^{\partial}_{p,q}(M, \partial_{1}M))|$ is at least $\tfrac{1}{2}(g - 4)$-connected. 
By what was proven above it follows that $|\bar{K}^{\partial}_{\bullet}(M, \ell_{M}, a)_{p,q}|$ is $\tfrac{1}{2}(g - 4)$-connected as well. 
This concludes the proof of the proposition.
\end{proof}
With the above proposition established, the proof of Theorem \ref{theorem: main homological stability theorem theta structure} is obtained by implementing the same constructions from Section \ref{section: homological stability}. 
We omit the rest of the proof and refer the reader to \cite[Section 7]{GRW 14} for details.

\appendix
\section{Embeddings and Disjunction} \label{appendix: embeddings and disjunction}
In this section we prove a disjunction result for embeddings of manifolds with boundary.   
This result implies Theorem \ref{theorem: relative higher Whitney trick} which is one of the main technical ingredients used to prove that the complex $K(M)_{p,q}$ is highly connected. 
\begin{theorem} \label{theorem: boundary disjunction}
Let $(M, \partial M)$ be a manifold pair of dimension $m$. 
Let $(P, \partial P)$ and $(Q, \partial Q)$ be manifold pairs of dimensions $p$ and $q$ respectively, with $\partial P \neq \emptyset \neq \partial Q$.  
Let 
$f: (P, \partial P) \longrightarrow (M, \partial M)$ and  $g: (Q, \partial Q) \longrightarrow (M, \partial M)$
be smooth embeddings and suppose that the following conditions are met:
\begin{enumerate} \itemsep.1cm
\item[(i)] $m > p + q/2 +1$,  $m > q + p/2 + 1$;
\item[(ii)] $(P, \partial P)$, $(Q, \partial Q)$, $P$, and $Q$ are $(p+q-m)$-connected;
\item[(iii)] $(M, \partial M)$ and $M$ are $(p+q-m+1)$-connected.
\end{enumerate}
Then there exists an isotopy
$\psi_{s}: (P, \partial P) \longrightarrow (M, \partial M)$, $s \in [0, 1],$
such that: 
$\psi_{0} = f$
and 
$\psi_{1}(P)\cap g(Q) = \emptyset$. 
\end{theorem}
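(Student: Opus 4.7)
The plan is to prove Theorem~\ref{theorem: boundary disjunction} in two stages: first disjoin the boundaries $f(\partial P)$ and $g(\partial Q)$ inside $\partial M$ by invoking the closed-manifold disjunction theorems of Wells \cite{We 67} and Hatcher--Quinn \cite{HQ 74}, then extend to an ambient isotopy of the pair $(M,\partial M)$ and eliminate the remaining interior intersections by a relative version of the same technique.

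First I would put $f$ and $g$ in general position as maps of pairs, so that $f(P)\cap g(Q)$ is a manifold pair of dimensions $(p+q-m,\, p+q-m-1)$ with the boundary portion $f(\partial P)\cap g(\partial Q)$ lying in $\partial M$. From the long exact sequences of the pairs $(P,\partial P)$, $(Q,\partial Q)$ and $(M,\partial M)$ combined with the assumed connectivities, one deduces that $\partial P$ and $\partial Q$ are $(p+q-m-1)$-connected and that $\partial M$ is $(p+q-m)$-connected. The dimension inequality $m>\max\{p+q/2,\,q+p/2\}+1$ translates to $2(m-1)>2(p-1)+(q-1)$ and $2(m-1)>(p-1)+2(q-1)$; these together with the above connectivities are exactly the hypotheses of Hatcher--Quinn applied to the closed embeddings $f|_{\partial P}$ and $g|_{\partial Q}$ inside $\partial M$, yielding an ambient isotopy of $\partial M$ after which they are disjoint. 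The isotopy extension theorem then lifts this to an ambient isotopy of the pair $(M,\partial M)$.

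Next, using a collar $c:\partial M\times[0,1)\hookrightarrow M$, a further isotopy supported in the collar can be arranged so that $f(P)$ and $g(Q)$ become disjoint throughout $c(\partial M\times[0,\tfrac{1}{2}))$. At this stage all remaining intersections lie strictly in the interior of $M\setminus c(\partial M\times[0,\tfrac{1}{4}))$, and $f(P)\cap g(Q)$ is now a closed $(p+q-m)$-submanifold of this interior region. The problem thus reduces to eliminating these interior intersections by an isotopy of $f$ supported away from a neighborhood of $\partial P$.

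For this final step I would apply the Hatcher--Quinn elimination procedure in the interior: the $(p+q-m+1)$-connectivity of both $M$ and $(M,\partial M)$ guarantees that each component of the intersection bounds a Whitney-type disk in $M$ which can be chosen to miss the collar of $\partial M$, while the codimension bounds coming from $m>p+q/2+1$ and $m>q+p/2+1$ leave enough room to embed such disks generically and to carry out the higher-dimensional Whitney trick, eliminating intersections one component at a time. The main technical obstacle is arranging these Whitney moves to be performed rel~the collar of $\partial M$ so that the first two stages are not disturbed; the hypothesis that $M$ and $(M,\partial M)$ are one degree more connected than $P,Q$ and the pairs $(P,\partial P),(Q,\partial Q)$ is precisely what provides the extra slack needed to push the Whitney disks off $\partial M$ before performing the trick, and thereby to obtain the desired isotopy of pairs $\psi_s:(P,\partial P)\longrightarrow(M,\partial M)$.
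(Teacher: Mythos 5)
There is a genuine gap in your proposal, and it is precisely the one the paper flags as the central subtlety of this theorem. After your Stages~1 and~2, you have $f(\partial P)\cap g(\partial Q)=\emptyset$ and a closed $(p+q-m)$-dimensional intersection manifold in the interior of $M$. At this point your Stage~3 wants to remove the interior intersections by an isotopy supported away from a neighborhood of $\partial P$, i.e.\ rel boundary. But by Theorem~\ref{theorem: hatcher quinn main theorem} such a rel-boundary isotopy exists \emph{only if} the Hatcher--Quinn invariant $\alpha_{t}(f,g,M)\in\Omega^{\fr}_{t}(\mathrm{pt.})$ vanishes, where $t=p+q-m$. The connectivity hypotheses in (ii) and (iii) serve to reduce the ambient bordism group to $\Omega^{\fr}_{t}(\mathrm{pt.})$ (Proposition~\ref{proposition: framed bordism reduction}); they do not make that group zero, and for $t\geq 1$ it generally is not. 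So your claim that ``each component of the intersection bounds a Whitney-type disk in $M$'' is unjustified --- the invariant $\alpha_t(f,g,M)$ is exactly the obstruction to finding such disks, and nothing in the hypotheses forces it to vanish. The remark following Proposition~\ref{proposition: framed bordism reduction} warns explicitly that $\alpha_t(f,g,M)$ may well be nonzero, and that the theorem can only be true because the isotopy $\psi_s$ is \emph{not} required to fix $\partial P$.

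The paper's proof resolves this by an idea your proposal never touches: after disjoining the boundaries (via Proposition~\ref{proposition: boundary disjunction}, which itself uses the \emph{relative} bordism group being trivial to conclude $\alpha_{t-1}(f|_{\partial P},g|_{\partial Q},\partial M)=0$ --- another step you assert without argument), one constructs an auxiliary map $\widehat{\varphi}:(D^p,\partial D^p)\to(M,\partial M)$ with $\alpha_t(\widehat{\varphi},g,M)=-\alpha_t(f,g,M)$ by precomposing a carefully chosen embedding $\phi:(D^{m-q},\partial D^{m-q})\to(M,\partial M)$ (Proposition~\ref{proposition: intersection creation}) with a map $\varphi$ realizing $-\alpha_t(f,g,M)$ under the Pontryagin--Thom isomorphism (Lemma~\ref{lemma: hopf map trick}). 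Taking the boundary connected sum $\widehat{f}=f\natural\widehat{\varphi}$ along an arc in $\partial M$ produces an embedding homotopic to $f$ (through maps of pairs, but not rel $\partial P$) with $\alpha_t(\widehat{f},g,M)=0$, so Theorem~\ref{theorem: hatcher quinn main theorem} applies to $\widehat{f}$, and one upgrades the resulting homotopy to an isotopy by \cite[Theorem~1.1]{HQ 74}. The entire point of the theorem is that by moving $\partial P$ inside $\partial M$ one can cancel a possibly nonzero Hatcher--Quinn obstruction; a rel-boundary Whitney-trick argument of the sort you sketch cannot succeed in general.
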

The proof of the above theorem is based on a technique developed by Hatcher
and Quinn from \cite{HQ 74}.  
We recall the results of Hatcher and Quinn in the following section, develop some new techniques in the sections that follow, and
then finish the proof of Theorem \ref{theorem: boundary
  disjunction} in Section \ref{subsection: proof of main theorem}.
\subsection{The Hatcher-Quinn invariant}
We now review the construction of Hatcher and Quinn from \cite{HQ 74}.
This construction involves the framed bordism groups of a space, twisted by a stable vector bundle. 
\begin{definition} \label{defn: normal bordism}
Let $X$ be a space and let \textcolor{black}{$\zeta$ be a stable vector}
bundle over $X$.  For an integer $n$, $\Omega^{\fr}_{n}(X;
\zeta)$ is defined to be the set of bordism classes of triples $(M, f,
F)$, where $M$ is a closed $n$-dimensional smooth manifold, $f: M
\longrightarrow X$ is a map, and $F: \nu_{M} \longrightarrow
f^{*}(\zeta)$ is an isomorphism of stable vector bundles covering
the identity map on $M$.
\end{definition}
Let $M$, $P$, and $Q$ be smooth manifolds of
dimensions $m$, $p$, and $q$ respectively.  Let $t$ denote the integer
$p + q - m$.  Let
$f: (P, \partial P) \longrightarrow (M, \partial M)$ and $g: (Q, \partial Q) \longrightarrow (M, \partial M)$
be smooth maps. 
We denote by $E(f, g)$ the \textit{homotopy pull-back} of the maps $f$ and $g$. 
Explicitly, $E(f, g)$ is the space defined by,
$$E(f, g) = \{(x, y, \gamma) \in P\times Q\times \text{Path}(M) \; | \; f(x) = \gamma(0), \quad g(y) = \gamma(1) \; \}.$$
Consider the diagram
\begin{equation}
\xymatrix{
E(f, g) \ar[rr]^{\pi_{P}} \ar[d]^{\pi_{Q}} \ar[drr]^{\hat{s}} && P \ar[d]^{f}   \\
Q \ar[rr]^{g} && M 
}
\end{equation}
where $\pi_{P}$ and $\pi_{Q}$ are the projection maps and $\hat{s}$ is
the map defined by $\hat{s}(x, y, \gamma) = \gamma(1/2)$.
Let $\nu_{P}$, $\nu_{Q}$ denote the stable normal
bundles associated to the manifolds $P$ and $Q$ respectively.  We
denote by $\eta(f, g)$ the stable vector bundle over $E(f, g)$ given
by the Whitney-sum
$\pi_{P}^{*}\nu_{P}\oplus\pi_{Q}^{*}\nu_{Q}\oplus\hat{s}^{*}(TM)$.  We
will need to consider the bordism group $\Omega^{\fr}_{t}(E(f, g);
\; \eta(f, g)).$

Suppose now that the maps $f: (P, \partial P) \longrightarrow (M, \partial M)$ and $g: (Q, \partial Q) \longrightarrow (M, \partial M)$ satisfy
$
f(\partial P)\cap g(\partial Q) = \emptyset.
$
It follows that the pull-back, 
$f\pitchfork g := (f\times g)^{-1}(\triangle_{M}) \subset P\times Q,$ 
is a closed submanifold of dimension $p+q-m$. 
Let us denote $t := p+q-m$.
Let 
$\iota: f\pitchfork g \longrightarrow E(f, g)$ 
denote the canonical embedding given by the formula $(x, y) \mapsto (x, y, c_{f(x)})$, where $c_{f(x)} \in \text{Path}(M)$ is the constant path at the point $f(x) \in M$. 
The lemma below follows from \cite[Proposition 2.1]{HQ 74}. 
\begin{lemma} \label{lemma: hatcher quinn invariant}
Let 
$f: (P, \partial P) \longrightarrow (M, \partial M)$ and $g: (Q, \partial Q) \longrightarrow (M, \partial M)$
be transversal smooth maps such that $f(\partial P)\cap g(\partial Q) = \emptyset$. 
Then there is a natural bundle isomorphism 
$\hat{\iota}: \nu_{f\pitchfork g} \stackrel{\cong} \longrightarrow \iota^{*}(\eta(f, g))$
so that the triple $(f\pitchfork g, \iota, \hat{\iota})$ determines a well-defined element of the bordism group $\Omega^{\fr}_{t}(E(f, g); \; \eta(f, g))$.
\end{lemma}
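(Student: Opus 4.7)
The plan is to produce the canonical bundle isomorphism $\hat{\iota}$ directly from the transversality of $f$ and $g$, and then verify that the resulting framed bordism class is independent of the auxiliary choices that went into its construction.

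For the bundle isomorphism, I would argue as follows. Since $f$ and $g$ are transversal and $f(\partial P)\cap g(\partial Q) = \emptyset$, the product map $f\times g\colon P\times Q \longrightarrow M\times M$ is transverse to the diagonal $\triangle_M \subset M\times M$, and $f\pitchfork g = (f\times g)^{-1}(\triangle_M)$ is a closed smooth submanifold of $P\times Q$ of dimension $t = p+q-m$. Transversality gives a natural identification of its normal bundle in $P\times Q$ with the pullback of the normal bundle of $\triangle_M$, which is canonically $TM$:
\[
\nu(f\pitchfork g,\; P\times Q) \;\cong\; (f\times g)^{*}\nu(\triangle_M,M\times M)\big|_{f\pitchfork g} \;\cong\; \hat{s}^{*}TM,
\]
where on points of $f\pitchfork g \subset E(f,g)$ (embedded via $\iota$ as constant paths) $\hat{s}$ evaluates the constant path $c_{f(x)}$ at $1/2$, giving $f(x) = g(y)$. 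Next, passing to stable normal bundles one has the standard splitting
\[
\nu_{f\pitchfork g} \;\cong\; \nu(f\pitchfork g,\;P\times Q) \;\oplus\; \nu_{P\times Q}\big|_{f\pitchfork g} \;\cong\; \hat{s}^{*}TM \;\oplus\; \pi_{P}^{*}\nu_{P} \;\oplus\; \pi_{Q}^{*}\nu_{Q},
\]
which is precisely $\iota^{*}\eta(f,g)$. This defines $\hat{\iota}$.

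For well-definedness in $\Omega_{t}^{\fr}(E(f,g);\eta(f,g))$, I would show that the class $[(f\pitchfork g,\iota,\hat{\iota})]$ does not depend on small perturbations needed to achieve transversality. Given any two transverse pairs $(f_{0},g_{0})$ and $(f_{1},g_{1})$ lying in the same path component of pairs with $f_{i}(\partial P)\cap g_{i}(\partial Q)=\emptyset$, pick a smooth homotopy $(F,G)\colon (P,\partial P)\times[0,1] \longrightarrow (M,\partial M)\times[0,1]$ which is transverse to the diagonal $\triangle_{M}\times[0,1]$ and disjoint on the boundary throughout. Then $F\pitchfork G$ is a compact $(t+1)$-dimensional bordism in $P\times Q\times[0,1]$ between $f_{0}\pitchfork g_{0}$ and $f_{1}\pitchfork g_{1}$, and the same transversality argument as above produces a bundle isomorphism between its stable normal bundle and the pullback of $\eta(F,G)$ to this bordism, which restricts on the two ends to the prescribed data $\hat{\iota}_{0}$, $\hat{\iota}_{1}$. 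The bordism maps to $E(f,g)$ via the obvious path-lifting construction (a transverse homotopy between $f_{0}\pitchfork g_{0}$ and $f_{1}\pitchfork g_{1}$ in $P\times Q$ yields a canonical family of paths in $M$ from the $F$-image to the $G$-image of each point).

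The only genuinely delicate point is the coherence of signs and stabilizations in the identification $\nu_{f\pitchfork g}\cong \iota^{*}\eta(f,g)$: one must commit to a fixed convention for the normal bundle of the diagonal (namely $\nu(\triangle_{M},M\times M)\cong TM$ via $(v,w)\mapsto w-v$) and for the decomposition of $\nu_{P\times Q}$, and check that these conventions are preserved under the bordism argument above. Once this bookkeeping is set up, everything else is routine transversality, exactly as in \cite{HQ 74}. Hence $(f\pitchfork g,\iota,\hat{\iota})$ defines a well-defined element of $\Omega^{\fr}_{t}(E(f,g);\eta(f,g))$.
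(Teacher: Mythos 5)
Your argument correctly reconstructs the standard Hatcher--Quinn identification: transversality to the diagonal gives $\nu(f\pitchfork g,\;P\times Q)\cong \hat{s}^{*}TM$, the stable splitting $\nu_{f\pitchfork g}\cong \nu(f\pitchfork g,\;P\times Q)\oplus\nu_{P\times Q}|_{f\pitchfork g}$ then yields $\iota^{*}\eta(f,g)$, and the bordism-over-a-homotopy argument gives well-definedness. The paper itself offers no proof of this lemma, stating only that it follows from Proposition 2.1 of Hatcher and Quinn; what you have written is precisely the argument that reference supplies, so your proposal is correct and matches the intended approach.
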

\begin{definition}
For transversal maps $f$ and $g$ with $f(\partial P)\cap g(\partial Q) = \emptyset$ as in the previous lemma, we will denote by 
$\alpha_{t}(f, g, M) \in \Omega^{\fr}_{t}(E(f, g); \;  \eta(f, g))$
the element determined by the the triple $(f\pitchfork g, \iota, \hat{\iota})$ given in Lemma \ref{lemma: hatcher quinn invariant}. 
\end{definition}

The main result from \cite{HQ 74} is the following theorem.
\begin{theorem} \label{theorem: hatcher quinn main theorem}
Let $f: (P, \partial P) \longrightarrow (M, \partial M)$ and $g: (Q, \partial Q) \longrightarrow (M, \partial M)$ be smooth embeddings such that $f(\partial P)\cap g(\partial Q) = \emptyset$.
Suppose further that 
$m > p + q/2 + 1$ and $m > p/2 + q + 1.$ 
If the class $\alpha_{t}(f, g, M)$ is equal to the zero element in $\Omega^{\fr}_{t}(E(f, g); \eta(f, g))$, then there exists an isotopy,
$\psi_{s}: (P, \partial P) \longrightarrow (M, \partial M),$
such that: $\psi_{0} = f$, $\psi_{s}|_{\partial P} = f$ for all $s \in [0, 1]$,
and $\psi_{1}(P)\cap g(Q) = \emptyset$.
\end{theorem}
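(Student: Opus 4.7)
The plan is to follow the higher-dimensional Whitney trick strategy of Hatcher and Quinn, in which the vanishing of the framed bordism invariant $\alpha_t(f,g,M)$ supplies the geometric ``Whitney data'' needed to guide an isotopy that removes the entire double-point submanifold at once. First I would put $f$ and $g$ in transverse position by a $C^\infty$-small isotopy of $f$, which I can take to be supported in a neighbourhood of the interior of $P$ (the hypothesis $f(\partial P)\cap g(\partial Q)=\emptyset$ lets me localize away from $\partial P$, so the condition $\psi_s|_{\partial P}=f$ is preserved from the outset). The double-point locus $L := f\pitchfork g \subset P\times Q$ is then a closed $t$-manifold, $t=p+q-m$, with a canonical reference map $\iota\colon L\to E(f,g)$ and canonical bundle isomorphism $\nu_L\cong \iota^*\eta(f,g)$, representing $\alpha_t(f,g,M)$.

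Next, I would translate the algebraic vanishing $\alpha_t(f,g,M)=0$ into geometric data. A null-bordism of $(L,\iota,\hat\iota)$ is a compact $(t+1)$-manifold $W$ together with a map $F\colon W\to E(f,g)$ extending $\iota$ and a stable framing $\nu_W \cong F^*\eta(f,g)$ extending $\hat\iota$. Writing $F=(F_P,F_Q,\Gamma)$ with $F_P\colon W\to P$, $F_Q\colon W\to Q$ and a homotopy $\Gamma\colon W\times[0,1]\to M$ from $f\circ F_P$ to $g\circ F_Q$, the midlevel map $\hat s\circ F\colon W\to M$ extends the inclusion of $L$ into $M$, and the normal framing gives a compatible identification $\nu_W \cong F_P^*\nu_P\oplus F_Q^*\nu_Q\oplus (\hat s\circ F)^*TM$ restricting correctly over $\partial W=L$.

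The heart of the argument is to promote this homotopy-theoretic bordism to a genuine embedded guide inside $M$. Under the metastability inequalities $m>p+q/2+1$ and $m>p/2+q+1$ one checks that $2(t+1)<m$, so $\hat s\circ F$ can be homotoped rel boundary to an embedding $\widetilde W\hookrightarrow M$; moreover the same inequalities allow a further general-position homotopy, again rel $\partial\widetilde W=L$, making $\widetilde W$ meet $f(P)$ only in a regular neighbourhood of $L$ inside $f(P)$ and miss $g(Q)$ away from $L$ (these are precisely the ``embedding up to the middle range for pairs'' estimates, and they use that $P,Q$ are in general position with respect to $W$ because $\dim P+\dim W<m+$ codim considerations, etc.). The stable framing of $\nu_W$ together with the matching data over $L$ provides a normal bundle splitting that identifies a tubular neighbourhood of $\widetilde W$ in $M$ with a model $W\times(\text{disk bundle})$ in which $f(P)$ and $g(Q)$ appear as two coordinate ``sheets'' meeting along $L$.

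In this normal form the classical Whitney finger move extends fibrewise over $W$: one defines an isotopy of $P$ supported in a neighbourhood of $F_P(W)\subset P$ which, inside the model neighbourhood of $\widetilde W$, pushes the $f(P)$-sheet across $\widetilde W$ and off the $g(Q)$-sheet, removing every point of $L$ simultaneously. By construction the move is supported away from $\partial P$ (since $\partial W=L$ lies in the interior and $F_P(W)$ can be taken to miss $\partial P$ by general position using $\dim W+\dim\partial P<\dim P$), so the resulting ambient isotopy $\psi_s$ satisfies $\psi_0=f$, $\psi_s|_{\partial P}=f$, and $\psi_1(P)\cap g(Q)=\emptyset$. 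The main obstacle in the argument is the combined embedding/general-position step: one must simultaneously turn $F$ into an embedding of $W$ in $M$, arrange that $\widetilde W$ meets the two sheets only along $L$, and preserve the framing data encoding the sheet structure; it is precisely to afford all three conditions at once that the two metastable dimension inequalities are needed, and this is the step whose verification consumes most of the work in \cite{HQ 74}.
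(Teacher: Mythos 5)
The paper does not prove this theorem itself---it is quoted directly from Hatcher--Quinn \cite{HQ 74}---and your sketch is a faithful outline of the argument given there: put $f,g$ in transverse position away from the boundary, interpret the vanishing of $\alpha_{t}(f,g,M)$ as a null-bordism $(W,F)$ of the double-point manifold, use the metastable inequalities to embed $W$ in $M$ rel $L$ with controlled intersections with the two sheets and compatible normal framing, and then perform the Whitney move parametrized over $W$. The one quibble is your justification for pushing $F_{P}(W)$ off $\partial P$ (the inequality $\dim W+\dim\partial P<\dim P$ reads $t<0$ and so fails exactly when there is something to prove); the correct, and easier, reason is that $F_{P}(\partial W)=\pi_{P}(L)$ already lies in the interior of $P$, so $F_{P}$ can be pushed off a collar of $\partial P$.
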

The bordism group $\Omega^{\fr}_{t}(E(f, g); \eta(f, g))$ in general
can be quite difficult to compute.  However, in the case where $P$,
$Q$, and $M$ are all highly connected, the group reduces to a far
simpler object.
\begin{proposition} \label{proposition: framed bordism reduction}
Let $f: (P, \partial P) \longrightarrow (M, \partial M)$ and $g: (Q, \partial Q) \longrightarrow (M, \partial M)$ be smooth embeddings such that $f(\partial P)\cap g(\partial Q) = \emptyset$.
Suppose that $P$ and $Q$ are $(p+q-m)$-connected and that $M$ is $(p+q-m+1)$-connected.
Then the natural map $\Omega^{\fr}_{t}(\text{pt.}) \rightarrow \Omega^{\fr}_{t}(E(f, g), \eta(f, g))$ is an isomorphism. 
\end{proposition}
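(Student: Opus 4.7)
The plan is to exploit the high connectivity of $E(f,g)$ forced by the hypotheses. Write $t = p+q-m$. The argument has two stages: first, establish that $E(f,g)$ is $t$-connected; second, deduce that the natural map is an isomorphism by a standard Pontryagin--Thom / Atiyah--Hirzebruch calculation.

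For the connectivity, I would view $E(f,g)$ as the total space of the homotopy fibration over $Q$ whose fibre is the homotopy fibre $F$ of $f \colon P \to M$. The long exact sequence
$$\pi_{i+1}(M) \to \pi_i(F) \to \pi_i(P) \to \pi_i(M),$$
combined with the hypotheses that $P$ is $t$-connected and $M$ is $(t+1)$-connected, yields $\pi_i(F) = 0$ for $i \leq t$, so $F$ is $t$-connected. The long exact sequence of the fibration $F \to E(f,g) \to Q$ together with the $t$-connectedness of $Q$ then gives $\pi_i(E(f,g)) = 0$ for $i \leq t$.

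For the bordism computation, recall that the natural map in the proposition is induced by the inclusion of a basepoint $\{e_0\} \hookrightarrow E(f,g)$ together with a chosen trivialization of $\eta(f,g)|_{e_0}$ (which exists since $\eta(f,g)|_{e_0}$ is a stable vector bundle over a point). Via the Pontryagin--Thom construction, $\Omega^{\fr}_n(X; \zeta)$ is identified with $\pi_n$ of the Thom spectrum of the virtual bundle $-\zeta$ over $X$, and under this identification the natural map becomes the map on stable homotopy induced by the map of Thom spectra $S^0 = \text{pt}^{-\eta(f,g)|_{e_0}} \to E(f,g)^{-\eta(f,g)}$. Since the inclusion $\{e_0\} \hookrightarrow E(f,g)$ is a $(t+1)$-equivalence by the previous step, the Thom construction preserves this connectivity and so the induced map on $\pi_t$ is an isomorphism.

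The only delicate point, which I would handle via the Atiyah--Hirzebruch spectral sequence with $E^2_{p,q} = H_p(E(f,g); \Omega^{\fr}_q(\text{pt}))$ converging to $\Omega^{\fr}_{p+q}(E(f,g); \eta(f,g))$, is verifying that the twisting by $\eta(f,g)$ does not obstruct the reduction. By the connectivity of $E(f,g)$, the group $H_p(E(f,g))$ vanishes for $1 \leq p \leq t$, so only the $p=0$ column contributes to total degree $t$, giving $H_0(E(f,g); \Omega^{\fr}_t(\text{pt})) = \Omega^{\fr}_t(\text{pt})$; no differentials enter or leave this entry in the relevant range, and the edge homomorphism is precisely the natural map of the proposition. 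The coefficient system is untwisted since $E(f,g)$ is simply connected (the case $t < 0$ being vacuous because then $f(P)$ and $g(Q)$ are generically disjoint for dimensional reasons, and $E(f,g)$ is nonempty and $0$-connected as soon as $t \geq 0$).
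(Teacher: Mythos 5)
Your computation that $E(f,g)$ is $t$-connected is correct; it just routes through a slightly different fibration than the paper's. The paper uses the single fibre sequence $\Omega M \to E(f,g) \to P\times Q$ (with $\Omega M$ being $t$-connected by the $(t+1)$-connectedness of $M$), whereas you first show the homotopy fibre $F$ of $f\colon P \to M$ is $t$-connected and then run the long exact sequence for $F \to E(f,g)\to Q$. Both are fine; the paper's version is a little more direct. For the bordism reduction the paper simply asserts that the result ``follows from this'' and gives no details, while you attempt to justify it by a Thom-spectrum / Atiyah--Hirzebruch argument.

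That elaboration has a gap. Your claim that $\{e_0\}\hookrightarrow E(f,g)$ is a $(t+1)$-equivalence is not forced by the hypotheses: the pair $(E(f,g),e_0)$ is $t$-connected but not necessarily $(t+1)$-connected, since $\pi_{t+1}(E(f,g))$ need not vanish. The inclusion of a point into a $t$-connected space is a $t$-equivalence, and although the induced map on $\pi_t$ of the spaces happens to be an isomorphism (both groups are zero), the Thom-spectrum functor only preserves the $t$-equivalence as such, yielding an isomorphism on $\pi_i$ for $i<t$ and merely a \emph{surjection} on $\pi_t$. In the Atiyah--Hirzebruch spectral sequence the same issue appears as the differential $d_{t+1}\colon E^{t+1}_{t+1,0}\to E^{t+1}_{0,t}$, which you do not rule out: $H_{t+1}(E(f,g);\Z)$ need not vanish when $E(f,g)$ is only $t$-connected, and the image of this differential is precisely the potential kernel of the natural map. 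With only the stated connectivity one obtains surjectivity of $\Omega^{\fr}_t(\text{pt.})\to\Omega^{\fr}_t(E(f,g);\eta(f,g))$; injectivity would follow if $E(f,g)$ were $(t+1)$-connected, or if one argued that $\eta(f,g)$ restricts trivially to the $(t+1)$-skeleton so that the low-dimensional cells of the Thom spectrum attach trivially. The paper's own proof sketch does not address this either, so the gap is shared, but the specific assertion that the basepoint inclusion is a $(t+1)$-equivalence should be corrected in your write-up.
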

\begin{proof}
Let $P$ and $Q$ be $(p+q-m)$-connected and let $M$ be $(p+q-m+1)$-connected.
There is a fibre sequence $\Omega M \longrightarrow E(f, g) \longrightarrow P\times Q$. 
The long exact sequence on homotopy groups implies that the space $E(f, g)$ is $(p+q-m)$-connected.
The proof of the proposition follows from this. 
\end{proof}
Suppose that $P$ and $Q$ are $(p+q-m)$-connected and that $M$ is $(p+q-m+1)$-connected.
If $f: (P, \partial P) \longrightarrow (M, \partial M)$ and $g: (Q, \partial Q) \longrightarrow (M, \partial M)$ are smooth embeddings we may consider $\alpha_{t}(f, g; M)$ to be an element of the framed bordism group $\Omega^{\fr}_{t}(\text{pt.})$, where $t = p + q - m$.
\begin{remark}
The element $\alpha_{t}(f, g; M)$ is the obstruction to finding an
isotopy, relative to the boundary $\partial P$, that pushes $f(P)$ off
of $g(Q)$.  This element $\alpha_{t}(f, g; M)$ very well may be
non-zero for arbitrary $f$, $g$, and $M$, and thus it may appear that
Theorem \ref{theorem: boundary disjunction} is false.  However,
Theorem \ref{theorem: boundary disjunction} does not assert the
existence of an isotopy that fixes the boundary of $P$.  
Indeed, as will
be seen in the following sections, $\alpha_{t}(f, g; M)$ is not an
obstruction to the existence of an isotopy that is non-constant on the
boundary of $P$.
\end{remark}
\subsection{Relative Hatcher-Quinn Invariant}
We will have to consider relative
framed bordism groups.
\begin{definition}
Let $(X, A)$ be a pair of spaces and let $\zeta$ be a stable vector
bundle over $X$.  For an integer $n$, $\Omega^{\fr}_{n}((X, A),
\zeta)$ is defined to be the set of bordism classes of triples $(M, f,
F)$ where $(M, \partial M)$ is an $n$-dimensional manifold pair, $f:
(M, \partial M) \longrightarrow (X, A)$ is a map, and $F: \nu_{M}
\longrightarrow f^{*}(\zeta)$ is an equivalence class of bundle
isomorphisms as before.
\end{definition}
For any space pair $(X, A)$ and stable vector bundle $\zeta$ over $X$,
there is a long exact sequence of bordism groups,
$
\xymatrix{
\cdots \ar[r] & \Omega^{\fr}_{n}(A; \zeta|_{A}) \ar[r] &  \Omega^{\fr}_{n}(X; \zeta) \ar[r] & \Omega^{\fr}_{n}((X, A); \zeta) \ar[r] & \Omega^{\fr}_{n-1}(A; \zeta|_{A}) \ar[r] & \cdots
}
$
Using these relative bordism groups, we define a relative version of
the Hatcher-Quinn invariant.  Let
$f: (P, \partial P) \longrightarrow (M, \partial M)$ and $g: (Q, \partial Q) \longrightarrow (M, \partial M)$
be embeddings.  
Unlike the case in the previous section, we now
include the possibility that the intersection $f(\partial P)\cap
g(\partial Q)$ be non-empty.  If $f$ and $g$ are transversal (and by
this we mean that both $f$ and $g$ and $f|_{\partial P}$ and
$g|_{\partial Q}$ are transversal in the ordinary sense), then the pull-back $f\pitchfork g$
is a manifold with boundary given by
$\partial(f\pitchfork g) = f|_{\partial P}\pitchfork g|_{\partial Q} \subset \partial P\times\partial Q.$
We will need to construct a relative version of the bordism invariant that was defined in the previous section. 

Let $\partial E(f, g)$ denote the homotopy pull-back $E(f|_{\partial P}, g|_{\partial Q})$. 
The space $\partial E(f, g)$ embeds naturally as a subspace of $E(f, g)$. 
We have a map of pairs 
$$\iota: (f\pitchfork g, \; \partial(f\pitchfork g)) \longrightarrow (E(f, g), \; \partial E(f, g)), \quad (x, y) \mapsto (x, y, c_{f(x)}).$$
The restriction of $\eta(f, g)$ to $\partial E(f, g)$ is equal to the bundle $\eta(f|_{\partial P}, g|_{\partial Q})$. 
To save space we will let $\widehat{E}(f, g)$ denote the pair $(E(f, g), \partial E(f, g))$. 
We will need to consider the relative bordism group 
$\Omega^{\fr}_{t}(\widehat{E}(f, g), \eta(f, g)).$
Let, 
\begin{equation}
\widehat{\partial}: \Omega^{\fr}_{t}\left(\widehat{E}(f, g), \; \eta(f, g)\right) \longrightarrow \Omega^{\fr}_{t-1}\left(\partial E(f, g), \; \eta(f, g)|_{\partial E(f, g)}\right),
\end{equation}
be the boundary homomorphism in the long exact sequence associated to the pair $\widehat{E}(f, g)$.
Using the same construction from Lemma \ref{lemma: hatcher quinn invariant}, we obtain:
\begin{lemma} \label{lemma: relative hatcher-quinn}
Let $f: (P, \partial P) \longrightarrow (M, \partial M)$ and $g: (Q, \partial Q) \longrightarrow (M, \partial M)$ be transversal maps. 
Then the pullback manifold $f\pitchfork g$ determines a class 
$\alpha^{\partial}_{t}(f, g, M) \in \Omega^{\fr}_{t}(\widehat{E}(f, g), \eta(f, g)).$
 Furthermore, we have
 $$\widehat{\partial}(\alpha^{\partial}_{t}(f, g, M)) \; = \; \alpha_{t-1}(f|_{\partial P}, g|_{\partial Q}, \partial M).$$
 \end{lemma}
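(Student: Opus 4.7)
The plan is to mirror the construction used in Lemma \ref{lemma: hatcher quinn invariant} (i.e.\ the non-relative Hatcher--Quinn construction from \cite{HQ 74}), working throughout in the setting of manifold pairs so that the class lands in the \emph{relative} bordism group. The transversality hypothesis is now to be interpreted simultaneously for the maps $f, g$ of interiors and for their restrictions $f|_{\partial P}, g|_{\partial Q}$ of boundaries, so that the pullback $f \pitchfork g \subset P \times Q$ is automatically a smooth $t$-dimensional manifold with boundary $\partial(f\pitchfork g) = f|_{\partial P} \pitchfork g|_{\partial Q}$.

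First I would construct the bundle isomorphism $\hat{\iota}: \nu_{f\pitchfork g} \stackrel{\cong}{\longrightarrow} \iota^{*}(\eta(f,g))$ exactly as in the proof of Lemma \ref{lemma: hatcher quinn invariant}. At a point $(x,y) \in f\pitchfork g$, transversality of $f$ and $g$ gives a canonical identification of the normal bundle of $f\pitchfork g$ in $P\times Q$ with $\hat{s}^{*}TM$ via $(u,v)\mapsto df(u)-dg(v)$. Adding the stable normal bundle $\nu_{P\times Q}|_{f\pitchfork g} \cong \pi_{P}^{*}\nu_{P}\oplus\pi_{Q}^{*}\nu_{Q}$ yields the desired stable equivalence with $\iota^{*}\eta(f,g) = \pi_{P}^{*}\nu_{P}\oplus\pi_{Q}^{*}\nu_{Q}\oplus\hat{s}^{*}TM$. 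Next I would check that $\iota$ is indeed a map of pairs: a boundary point $(x,y) \in \partial(f\pitchfork g)$ has $x \in \partial P$, $y \in \partial Q$, and $f(x) = g(y) \in \partial M$, so $\iota(x,y) = (x,y,c_{f(x)})$ lies in $E(f|_{\partial P}, g|_{\partial Q}) = \partial E(f,g)$. Thus the triple $(f\pitchfork g, \iota, \hat{\iota})$ represents a class in the relative bordism group $\Omega^{\fr}_{t}(\widehat{E}(f,g); \eta(f,g))$. Independence on transversal perturbations of $f$ and $g$ is standard: a generic isotopy between two transversal choices sweeps out a bordism of pairs in the obvious way, and the bundle data extends over it by the same pointwise recipe.

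For the boundary identity, I would simply unwind the definition of the connecting homomorphism $\widehat{\partial}$: it sends the class of a relative cycle to the class of its boundary, equipped with the restricted reference map and restricted bundle data. By construction the restriction of $(f\pitchfork g,\iota,\hat{\iota})$ to $\partial(f\pitchfork g)$ is precisely the triple $\bigl(f|_{\partial P}\pitchfork g|_{\partial Q},\, \iota_{\partial},\, \hat{\iota}_{\partial}\bigr)$, and the pointwise formula for $\hat{\iota}$ agrees verbatim with the pointwise formula defining the bundle isomorphism for the non-relative invariant on the boundary (this is the naturality of the transversality splitting). Hence the boundary class is exactly $\alpha_{t-1}(f|_{\partial P}, g|_{\partial Q}, \partial M)$.

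The only real content is the naturality/compatibility check in the second step; everything else is a direct transcription of the non-relative construction to pairs. There is no genuine obstacle beyond careful bookkeeping of the stable bundle identifications, and in particular the dimension hypotheses of Theorem \ref{theorem: boundary disjunction} play no role at this stage — they are only used later to deduce that vanishing of $\alpha^{\partial}_{t}(f,g,M)$ can be realized by an actual isotopy.
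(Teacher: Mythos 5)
Your proposal is correct and follows essentially the same route the paper intends: the paper gives no written proof, simply asserting the lemma as arising ``by the same construction'' as the non-relative Lemma A.2, and you make that construction explicit for pairs (transversality of both $f,g$ and their boundary restrictions, pointwise bundle identification via $(u,v)\mapsto df(u)-dg(v)$, $\iota$ as a map of pairs, and the boundary identity by unwinding the connecting homomorphism). The only bookkeeping point left tacit in both accounts is that $\eta(f,g)\vert_{\partial E(f,g)}$ and $\eta(f\vert_{\partial P},g\vert_{\partial Q})$ differ by a trivial line bundle coming from the collar of $\partial M$ in $M$ (and likewise for $\partial P\subset P$, $\partial Q\subset Q$), so the asserted equality of twisting bundles is really a canonical stable isomorphism inducing an identification of the corresponding bordism groups; this is exactly the kind of stable-bundle bookkeeping you flag, and it does not affect the argument.
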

 Lemma \ref{lemma: relative hatcher-quinn} will be useful to us in order to prove the following result. 
 \begin{proposition} \label{proposition: boundary disjunction}
 Let $f: (P, \partial P) \longrightarrow (M, \partial M)$ and $g: (Q, \partial Q) \longrightarrow (M, \partial M)$
 be embeddings.
 Suppose that $(P, \partial P)$ and $(Q, \partial Q)$ are $(p+q-m)$-connected and that $(M, \partial M)$ is $(p+q-m+1)$-connected.
 Then there exists an isotopy 
 $\Psi_{s}: P \longrightarrow M$ with $s \in [0, 1]$, such that
 $\Psi_{0} = f$ and 
$\Psi_{1}(\partial P)\cap g(\partial Q) = \emptyset$. 
 \end{proposition}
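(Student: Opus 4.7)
The plan is to reduce the problem to the absolute Hatcher--Quinn disjunction theorem applied in $\partial M$ to the restrictions $f|_{\partial P}$ and $g|_{\partial Q}$, and then to extend the resulting boundary isotopy globally via isotopy extension in a collar of $\partial M$. Concretely, I will produce an isotopy $\phi_s \colon \partial P \to \partial M$ with $\phi_0 = f|_{\partial P}$ and $\phi_1(\partial P)\cap g(\partial Q)=\emptyset$; then a collar neighborhood $\partial M\times [0,\epsilon) \subset M$ together with the smooth isotopy extension theorem will give the required global isotopy $\Psi_s\colon P\to M$ with $\Psi_0 = f$ and $\Psi_1|_{\partial P}=\phi_1$.

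For the boundary isotopy, note that $\partial P$ and $\partial Q$ are closed, so the ``rel boundary'' clause in Theorem \ref{theorem: hatcher quinn main theorem} is vacuous and its dimensional hypotheses (inherited from the ambient application) will hold. It therefore suffices to show that the Hatcher--Quinn obstruction
$\alpha_{t-1}(f|_{\partial P}, g|_{\partial Q}, \partial M) \in \Omega^{\fr}_{t-1}\bigl(E(f|_{\partial P}, g|_{\partial Q});\eta(f|_{\partial P}, g|_{\partial Q})\bigr)$
vanishes, where $t = p+q-m$. After perturbing $f$ to be transverse to $g$, Lemma \ref{lemma: relative hatcher-quinn} gives $\alpha_{t-1}(f|_{\partial P}, g|_{\partial Q}, \partial M) = \widehat{\partial}\bigl(\alpha^{\partial}_t(f,g,M)\bigr)$, so it is enough to prove that the map $\widehat{\partial}$ is zero in this degree; this is in turn implied by the vanishing of the relative twisted bordism group $\Omega^{\fr}_t\bigl(\widehat{E}(f,g);\eta(f,g)\bigr)$.

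To establish this vanishing I will show the pair $\widehat{E}(f,g) = (E(f,g), \partial E(f,g))$ is $t$-connected; then the relative Thom spectrum of $\eta(f,g)$ is $(t+\mathrm{rk}\,\eta)$-connected and the required $\Omega^{\fr}_t$-group vanishes by a standard Atiyah--Hirzebruch argument. The $t$-connectivity of the pair comes from the morphism of homotopy fibre sequences
$\Omega \partial M \to \partial E(f,g) \to \partial P\times\partial Q$ and $\Omega M \to E(f,g) \to P\times Q$. The base map $\partial P\times \partial Q \to P\times Q$ is $t$-connected because each of $\partial P \to P$ and $\partial Q \to Q$ is (from the $t$-connectivity of the pairs $(P,\partial P)$, $(Q,\partial Q)$), and the fibre map $\Omega\partial M \to \Omega M$ is $t$-connected because $(M,\partial M)$ is $(t+1)$-connected. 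A long-exact-sequence diagram chase then shows $\partial E(f,g) \hookrightarrow E(f,g)$ is $t$-connected.

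The main obstacle is the final connectivity step: the bookkeeping for the pair $(E(f,g), \partial E(f,g))$ at the edge dimension $\pi_t$, where a blunt appeal to the five-lemma fails and a direct diagram chase using the exactness of the two fibration sequences is needed, together with the translation of $t$-connectivity into the vanishing of the twisted bordism group $\Omega^{\fr}_t\bigl(\widehat{E}(f,g);\eta(f,g)\bigr)$ via the relative Thom spectrum. Once this is in hand, Theorem \ref{theorem: hatcher quinn main theorem} furnishes the boundary isotopy and isotopy extension delivers the global isotopy $\Psi_s$ promised by the proposition.
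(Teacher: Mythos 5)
Your proposal is correct and follows essentially the same route as the paper: vanishing of $\Omega^{\fr}_{t}(\widehat{E}(f,g),\eta(f,g))$ from $t$-connectivity of the pair $(E(f,g),\partial E(f,g))$, the identity $\widehat\partial(\alpha^{\partial}_t)=\alpha_{t-1}(f|_{\partial P},g|_{\partial Q},\partial M)$ from Lemma~\ref{lemma: relative hatcher-quinn}, then Theorem~\ref{theorem: hatcher quinn main theorem} in $\partial M$ followed by isotopy extension. The only difference is that you spell out the $t$-connectivity of the pair via a comparison of the two homotopy-pullback fibre sequences, a step the paper simply asserts; note, though, that you slightly overstate the delicacy here, since the standard ``$n$-connected on fibre and base implies $n$-connected on total space'' statement handles the edge dimension by the usual five/four-lemma argument without incident.
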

 \begin{proof}
Let $t$ denote the integer $p + q - m$. 
The connectivity conditions in the statement of the proposition implies that the pair $(E(f, g), \partial E(f, g))$ is $t$-connected, and thus, the bordism group $\Omega^{\fr}_{t}(\widehat{E}(f, g), \eta(f, g))$ is trivial. 
It follows from this that 
$\widehat{\partial}(\widehat{\alpha}_{t}(f, g, M)) =
\alpha_{t-1}(f|_{\partial P}, g|_{\partial Q}, \partial M) = 0.$ 
We
may then apply Theorem \ref{theorem: hatcher quinn main theorem} to
obtain an isotopy $\psi_{s}: \partial P \longrightarrow \partial M$
with $\psi_{0} = f|_{\partial P}$, such that $\psi_{1}(\partial P)\cap
g(\partial Q) = \emptyset$.  The proof of the proposition then follows
by application of the \textit{isotopy extension theorem}.
\end{proof}
\subsection{Creating intersections}
In this section we develop a technique for creating intersections with
prescribed Hatcher-Quinn obstructions.  Let $M$ and $Q$ be oriented,
connected manifolds of dimension $m$ and $q$ respectively, and let
$g: (Q, \partial Q) \longrightarrow (M, \partial M)$
be an embedding. 
Let $r = m - q$ and let $f: (D^{r}, \partial D^{r}) \longrightarrow (M, \partial M)$ be a smooth embedding transverse to $g$ such that, 
$f(\partial D^{r})\cap g(\partial Q) = \emptyset.$
Let $j \geq 0$ be an integer strictly less than $r$. 
With $j$-chosen in this way it follows that $\pi_{r+j}(S^{r})$ is in the stable range and thus we have an isomorphism, $\pi_{r+j}(D^{r}, \partial D^{r}) \cong \pi_{r+j}(S^{r})$.
Let $\varphi:
(D^{r+j}, \partial D^{r + j}) \longrightarrow (D^{r}, \partial D^{r})$
be a smooth map.  
Denote by
\begin{equation}
\mathcal{P}_{j}: \pi_{r+j}(D^{r}, \partial D^{r}) \cong \pi_{r+j}(S^{r}) \stackrel{\cong} \longrightarrow \Omega^{\fr}_{j}(\text{pt.})
\end{equation}
the \textit{Pontryagin-Thom} isomorphism.
The following lemma shows
how to express
$\alpha_{j}(f\circ\varphi, \; g; \; M)$
in terms of $\alpha_{0}(f, g, M)$ and the element $\mathcal{P}_{j}([\varphi]) \in \Omega^{\fr}_{j}(\text{pt.})$.
\begin{lemma} \label{lemma: hopf map trick}
Let $g$, $f$, and $\varphi$ be exactly as above and suppose that $f(\partial D^{r})\cap g(\partial Q) = \emptyset$. 
Then
$$\alpha_{j}(f\circ \varphi, \; g; \; M) \; = \; \alpha_{0}(f, g; M)\cdot \mathcal{P}_{j}([\varphi]),$$
where the product on the right-hand side is the product in the graded bordism ring $\Omega^{\fr}_{*}(\text{pt.})$. 
\end{lemma}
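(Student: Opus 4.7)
The strategy is a direct geometric computation of the pullback manifold $(f\circ\varphi)\pitchfork g$ and its induced framing. Since $f$ is transverse to $g$ with $\dim(D^r)+\dim(Q)=m$, the intersection $f^{-1}(g(Q))=\{p_1,\dots,p_N\}$ is a finite set, and the boundary condition $f(\partial D^r)\cap g(\partial Q)=\emptyset$ places every $p_i$ in $\Int(D^r)$. By definition, $\alpha_0(f,g;M)=\sum_{i=1}^N\epsilon_i\in\Omega^{\fr}_0(\text{pt.})=\Z$, where $\epsilon_i=\pm 1$ records whether the isomorphism $df_{p_i}\oplus dg_{y_i}\colon T_{p_i}D^r\oplus T_{y_i}Q\to T_{f(p_i)}M$ (with $g(y_i)=f(p_i)$) preserves orientation.

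First I would perturb $\varphi$ within its relative homotopy class so that each $p_i$ is a regular value of $\varphi$; since the $p_i$ lie in the interior, this can be arranged without modifying $\varphi|_{\partial D^{r+j}}$, and in particular without changing $[\varphi]\in\pi_{r+j}(D^r,\partial D^r)$. Then each $\varphi^{-1}(p_i)\subset\Int(D^{r+j})$ is a closed $j$-manifold whose Pontryagin--Thom framing (normal bundle trivialized via $d\varphi$ and the standard framing of $T_{p_i}D^r\cong\R^r$) represents $\mathcal{P}_j([\varphi])$ for every $i$. Because $f$ is an embedding, $f(\varphi(z))=g(y)$ if and only if $\varphi(z)=p_i$ and $y=y_i$ for some $i$, and regularity of $p_i$ for $\varphi$ combined with transversality of $f,g$ gives transversality of $f\circ\varphi$ and $g$ there. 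Hence
$$
(f\circ\varphi)\pitchfork g \;=\; \bigsqcup_{i=1}^N\varphi^{-1}(p_i)\times\{y_i\},
$$
which is disjoint from the boundary, so by Lemma \ref{lemma: relative hatcher-quinn} the Hatcher--Quinn class $\alpha_j(f\circ\varphi,g;M)$ descends to $\Omega^{\fr}_j(\text{pt.})$ under the identification of Proposition \ref{proposition: framed bordism reduction}.

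Next I would compute the framing contribution of each component. The normal bundle of $\varphi^{-1}(p_i)\times\{y_i\}$ in $D^{r+j}\times Q$ splits as $\nu_{\varphi^{-1}(p_i)/D^{r+j}}\oplus T_{y_i}Q$, and its identification with $\eta(f\circ\varphi,g)=\pi_{D^{r+j}}^{*}\nu_{D^{r+j}}\oplus\pi_{Q}^{*}\nu_{Q}\oplus\hat{s}^{*}TM$ uses $d\varphi$ on the first factor followed by $df_{p_i}\oplus dg_{y_i}$ to match the $TM$ factor. The discrepancy between this identification and the product of the Pontryagin--Thom framing on $\varphi^{-1}(p_i)$ with the standard 0-framing at the point $(p_i,y_i)$ is exactly the sign $\epsilon_i$. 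Consequently the contribution of the $i$-th component to $\alpha_j(f\circ\varphi,g;M)$ is precisely $\epsilon_i\cdot\mathcal{P}_j([\varphi])\in\Omega^{\fr}_j(\text{pt.})$.

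Summing and using that the ring product $\Omega^{\fr}_0(\text{pt.})\otimes\Omega^{\fr}_j(\text{pt.})\to\Omega^{\fr}_j(\text{pt.})$ is scalar multiplication by integers gives
$$
\alpha_j(f\circ\varphi,g;M) \;=\; \sum_{i=1}^N\epsilon_i\cdot\mathcal{P}_j([\varphi]) \;=\; \alpha_0(f,g;M)\cdot\mathcal{P}_j([\varphi]).
$$
The main obstacle is the careful bookkeeping of orientations and framings through the identification $\eta(f\circ\varphi,g)|_{\varphi^{-1}(p_i)\times\{y_i\}}\cong\nu_{\varphi^{-1}(p_i)/D^{r+j}}\oplus T_{y_i}Q\oplus\cdots$, verifying that the sign recovered from $df_{p_i}\oplus dg_{y_i}$ matches the $\epsilon_i$ appearing in $\alpha_0(f,g;M)$, and checking that the stable-range isomorphism $\pi_{r+j}(D^r,\partial D^r)\cong\pi_{r+j}(S^r)$ (available because $j<r$) introduces no extra sign in the definition of $\mathcal{P}_j$.
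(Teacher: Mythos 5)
Your argument is correct and follows essentially the same route as the paper: both proofs compute $(f\circ\varphi)\pitchfork g$ as $\bigsqcup_i\varphi^{-1}(x_i)$ over the intersection points of $f(D^r)$ with $g(Q)$, identify each component with $\pm\mathcal{P}_j([\varphi])$ in $\Omega^{\fr}_j(\text{pt.})$, and sum. The only real difference is that the paper first applies the Whitney trick (legitimate here since the connectivity hypotheses make $M$ simply connected) to isotope $f$ so that all intersection points carry the same sign, thereby collapsing the per-point sign bookkeeping you instead carry through directly via the $\epsilon_i$; both handlings are sound, and your version is marginally more general at the cost of the orientation/framing verification you flag but do not fully spell out.
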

\begin{proof}
Let $\ell \in \Z$ denote the oriented, algebraic intersection number associated to the intersection of $f(D^{r})$ and $g(Q)$.
By application of the Whitney trick, we may deform $f$ so that
\begin{equation} \label{eq: pre-image equality}
f(D^{r})\cap g(Q) = \{x_{1}, \dots, x_{\ell}\},
\end{equation}
where the points $x_{i}$ for $i = 1, \dots, \ell$ all have the same sign. 
It follows that, 
$(f\circ\varphi)^{-1}(g(Q)) =
\bigsqcup_{i=1}^{\ell}\varphi^{-1}(x_{i}).$ 
For each $i \in \{1,
\dots, \ell\}$, the framing at $x_{i}$ (induced by the orientations of
$f(D^{r})$, $g(Q)$ and $M$) induces a framing on
$\varphi^{-1}(x_{i})$.  We denote the element of
$\Omega_{j}^{\fr}(\text{pt.})$ given by $\varphi^{-1}(x_{i})$ with
this induced framing by $[\varphi^{-1}(x_{i})]$.  By definition of the
Pontryagin-Thom map $\mathcal{P}_{j}$,
the element $[\varphi^{-1}(x_{i})]$ is equal to
$\mathcal{P}_{j}([\gamma])$ for $i = 1, \dots, \ell$.  Using the
equality (\ref{eq: pre-image equality}), it follows that
$\alpha_{j}(f\circ\varphi, \; g \; M) = \ell\cdot
\mathcal{P}_{j}([\varphi]).$ 
The proof then follows from the fact
that $\alpha_{0}(f, g, M)$ is identified with the algebraic
intersection number associated to $f(S^{r})$ and $g(Q)$.
\end{proof}
We apply the above lemma to the following proposition.
\begin{proposition} \label{proposition: intersection creation}
Let $Q$ and $M$ have non-empty boundary and let $g: (Q, \partial Q)
\longrightarrow (M, \partial M)$ be a smooth embedding.  Let $r$
denote the integer $m-q$.  There exists an embedding
$f: (D^{r}, \partial D^{r}) \longrightarrow (M, \partial M)$
that satisfies the following conditions:
\begin{itemize} \itemsep.1cm
\item $f(\partial D^{r})\cap g(\partial Q) = \emptyset$, 
\item $f(D^{r})\cap g(Q)$ consists of a single point with positive orientation, 
\item $f$ represents the trivial element in $\pi_{r}(M, \partial M)$. \end{itemize}
\end{proposition}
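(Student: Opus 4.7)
The plan is to construct $f$ so that its image lies entirely inside $\partial M$, as a small normal slice to $g(\partial Q)$ inside $\partial M$; all three required properties will then follow essentially for free. The starting observation is that $g$ is a neat embedding of manifold pairs, so $g(Q)$ meets $\partial M$ transversally along $g(\partial Q)$, and the codimension of $g(\partial Q)$ in $\partial M$ equals $(m-1)-(q-1)=m-q=r$, which matches the codimension of $g(Q)$ in $M$. Thus a normal $r$-plane to $g(Q)$ at a boundary point of $Q$ is automatically tangent to $\partial M$.

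Concretely, I would pick any point $x\in\partial Q$ and, using a collar of $\partial M$ in $M$ together with a collar of $\partial Q$ in $Q$, choose local coordinates $(y_{1},\ldots,y_{m-1},t)$ on a neighborhood $U$ of $g(x)$ in $M$ such that $\partial M=\{t=0\}$, $g(Q)\cap U=\{y_{1}=\cdots=y_{r}=0\}$, and consequently $g(\partial Q)\cap U=\{y_{1}=\cdots=y_{r}=0,\ t=0\}$. For $\delta>0$ small, define
$$
f: D^{r}\longrightarrow M,\qquad v\longmapsto (\delta v,\,0,\ldots,0,\,0),
$$
which is a smooth embedding whose image is the $r$-disk $\{y_{r+1}=\cdots=y_{m-1}=t=0,\ |y_{1},\ldots,y_{r}|\le\delta\}\subset\partial M$.

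Verifying the three required properties is then direct. First, $f(\partial D^{r})$ is the $(r-1)$-sphere $\{|y_{1},\ldots,y_{r}|=\delta\}$ inside $\partial M$, which by construction is disjoint from the coordinate plane $g(\partial Q)\cap U$, and for $\delta$ sufficiently small it misses all of $g(\partial Q)$. Second, $f(D^{r})\cap g(Q)$ reduces to $f(D^{r})\cap g(Q)\cap U=\{g(x)\}$, a single point, and dimension count shows the intersection is transverse in $M$; one chooses the orientation on $D^{r}$ so that this intersection counts as $+1$. Third, since $f(D^{r})\subset\partial M$, the map of pairs $f\colon(D^{r},\partial D^{r})\to(M,\partial M)$ factors through $(\partial M,\partial M)$, and $\pi_{r}(\partial M,\partial M)=0$, hence $[f]=0$ in $\pi_{r}(M,\partial M)$.

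There is no genuinely hard step here. The main point to be careful about is the neatness of $g$ at the corner $g(\partial Q)\subset\partial M$, which is what permits the adapted product coordinates above and, crucially, forces the normal $r$-plane to $g(Q)$ at $g(x)$ to lie inside $T_{g(x)}\partial M$. Once this is in place, the triviality in $\pi_{r}(M,\partial M)$ comes from having realized $f$ inside $\partial M$, and the intersection count is a one-line local computation.
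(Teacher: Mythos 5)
Your construction satisfies the three conditions as literally written, but it produces a degenerate configuration that does not serve the purpose of the proposition. By placing $f(D^{r})$ entirely inside $\partial M$, the single intersection point $f(D^{r})\cap g(Q)$ is the point $g(x)$ with $x\in\partial Q$, so it lies on $g(\partial Q)\subset\partial M$. Two problems follow. First, your $f$ is not a neat embedding of pairs: $f^{-1}(\partial M)=D^{r}$ rather than $\partial D^{r}$, whereas the Hatcher--Quinn intersection theory that this proposition feeds into (Lemma \ref{lemma: hatcher quinn invariant}, Lemma \ref{lemma: hopf map trick}, Theorem \ref{theorem: hatcher quinn main theorem}) is set up for proper embeddings. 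Second, and more concretely, in the proof of Theorem \ref{theorem: boundary disjunction} the proposition is invoked to produce $\phi$ ``such that $\phi(D^{m-q})$ intersects the interior of $g(Q)$ at exactly one point,'' after which the proof states that ``the map $\phi\circ\varphi$ has image disjoint from $g(\partial Q)$'' and uses this to pass to $M'=M\setminus g(\partial Q)$ and homotope $\phi\circ\varphi$ to an embedding there. With your $f$, the image of $\phi=f$ contains $g(x)\in g(\partial Q)$, so this step fails. The intersection must be pushed to $g(\Int Q)\cap \Int M$.

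The paper's proof secures exactly this: an arc $\gamma$ from $g(\partial Q)$ into $g(\Int Q)$ via a collar of $\partial Q$, a half-disk $\alpha(D^{2}_{+})$ meeting $g(Q)$ along $\gamma$, and an orthonormal $(r-1)$-frame yielding $\bar{f}\colon D^{r+1}_{+}\to M$; the restriction $f=\bar{f}|_{\partial_{1}D^{r+1}_{+}}$ then meets $g(Q)$ only at the interior point $\gamma(1)$, and $\bar{f}$ exhibits the null-homotopy. Your construction can be repaired by a small isotopy pushing $f(\Int D^{r})$ into $\Int M$ rel $\partial D^{r}$ using the collar of $\partial M$: this preserves conditions (1) and (3), moves the unique intersection point into $g(\Int Q)\cap\Int M$, and makes $f$ neat. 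But as written, that step is missing, and it is precisely the part the paper's more elaborate construction is designed to guarantee.
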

\begin{proof}
We will prove the proposition by carrying out an explicit construction as follows:
\begin{enumerate} \itemsep.1cm
\item[\textcolor{black}{(i)}] Choose a collar embedding $h: \partial Q\times[0, \infty) \longrightarrow Q$ such that $h^{-1}(\partial Q) = \partial Q\times\{0\}$.
\item[\textcolor{black}{(ii)}]  
Choose a point $y \in \partial Q$, then define an embedding
$\gamma: [0, 1] \longrightarrow g(Q),$ $\gamma(t) = g(h(y, t)).$ 
We then let $x \in g(Q)$ denote the point $\gamma(1)$.
\item[\textcolor{black}{(iii)}] 
Choose an embedding $\alpha: (D^{2}_{+}, \partial_{0}D^{2}_{+}) \longrightarrow (M, \partial M)$ that satisfies the following conditions:
\begin{enumerate} \itemsep.1cm
\item[(a)] $\alpha(D^{2}_{+})\cap g(Q) = \gamma([0, 1])$, 
\item[(b)] $\alpha(\partial_{1}D^{2}_{+})\cap g(Q) = \{x\}$, 
\item[(c)] $\alpha(D^{2}_{+})$ intersects $g(Q)$ orthogonally (with resect to some metric on $M$).
\end{enumerate}
\item[\textcolor{black}{(iv)}] Let $r$ denote the integer $m-q$. 
Choose a $(r-1)$-frame of orthogonal vector fields $(v_{1}, \dots, v_{r-1})$ over the embedded half-disk $\alpha(D^{2}_{+}) \subset M$ with
the property that $v_{i}$ is orthogonal to $\alpha(D^{2}_{+})$ and orthogonal to $g(Q)$ over the intersection $\alpha(D^{2}_{+})\cap g(Q)$, for $i = 1, \dots, r-1$. 
Since the disk is contractable, there is no obstruction to the existence of such a frame. 

\end{enumerate}The orthogonal $(r-1)$-frame chosen in step \textcolor{black}{(iv)} induces an embedding 
$\bar{f}: (D^{r+1}_{+}, \partial_{0}D^{r+1}_{+}) \longrightarrow (M, \partial M).$ 
The orthogonality condition (condition (c)) in Step
(iii) of the above construction, together with the orthogonality
condition on the frame chosen in step (iv), implies that
$\bar{f}(D^{r+1}_{+})$ is transverse to $g(Q)$.
Furthermore, condition (b) from step (iii) of the above construction implies that
$f(\partial_{1}D^{r+1})\cap g(\Int(Q)) = \{x\}.$  
We then set the map $f: (D^{r}, \partial D^{r-1}) \longrightarrow (M, \partial M)$ equal to the embedding obtained by restricting $\bar{f}$ to $(\partial_{1}D^{r+1}, \partial_{0,1}D^{r+1})$.
This concludes the proof of the proposition. 
\end{proof}
\subsection{Proof of Theorem \ref{theorem: boundary disjunction}} \label{subsection: proof of main theorem}
Let 
$f: (P, \partial P) \longrightarrow (M, \partial M)$ and $g: (Q, \partial Q) \longrightarrow (M, \partial M)$
be smooth embeddings exactly as in the statement of Theorem \ref{theorem: boundary disjunction}.
Suppose that conditions (i), (ii), and (iii) from the statement of Theorem \ref{theorem: boundary disjunction} are satisfied.
We now have all of the necessary tools available to prove the theorem. 
\begin{proof}[Proof of Theorem \ref{theorem: boundary disjunction}]
By Proposition \ref{proposition: boundary disjunction} we may assume that $f(\partial P)\cap g(\partial Q) = \emptyset$.
Consider the element 
$\alpha_{t}(f, g, M) \in \Omega^{\fr}_{t}(\text{pt})$
where as before $t = p + q - m$. 
Let 
$\varphi: (D^{p}, \partial D^{p}) \longrightarrow (D^{m-q}, \partial D^{m-q})$
be a map such that $\mathcal{P}_{t}([\varphi]) = -\alpha_{t}(f, g, M)$ as elements of $\Omega^{\fr}_{t}(\text{pt.})$. 
By Proposition \ref{proposition: intersection creation} there exists a null-homotopic embedding
$\phi: (D^{m-q}, \partial D^{m-q}) \longrightarrow (M, \partial M)$
such that $\phi(D^{m-q})$ intersects the interior of $g(Q)$ at exactly one point. 
Furthermore, by general position we may assume that $\phi(\partial D^{m-q})$ is disjoint from $g(\partial Q)$. 
It follows from Lemma \ref{lemma: hopf map trick} that, 
\begin{equation} \label{equation: pontryagin thom intersection}
\alpha_{t}(\phi\circ\varphi, g, M) = \mathcal{P}_{t}(\varphi)\cdot\alpha_{0}(\phi, g, M) = -\alpha_{t}(f, g, M).
\end{equation}
Now, the map $\phi\circ\varphi$ has image disjoint from $g(\partial Q) \subset \partial M$. 
Let $M'$ denote the complement $M\setminus g(\partial Q)$. 
By the connectivity and dimensional conditions from the statement of Theorem \ref{theorem: boundary disjunction}, we may apply Lemma \ref{lemma: basic embedding lemma} (see also \cite[Theorem 1]{H 69}) to obtain a homotopy 
\begin{equation} \label{equation: homotopy to embedding}
\widehat{\varphi}_{s}: (D^{p}, \partial D^{p}) \longrightarrow (M', \partial M'), \quad s \in [0,1],
\end{equation}
with $\widehat{\varphi}_{0} = \phi\circ\varphi$ such that $\widehat{\varphi}_{1}$ is an embedding. 
Let us denote,
$\widehat{\varphi} := i_{M'}\circ\widehat{\varphi}_{1}: (D^{p}, \partial D^{p}) \longrightarrow (M, \partial M)$,
where $i_{M'}: (M', \partial M') \hookrightarrow (M, \partial M)$ is the inclusion.
Since the homotopy in (\ref{equation: homotopy to embedding}) was through maps with image in $(M', \partial M')$, it follows that 
$\alpha_{t}(\widehat{\varphi}, g, M) = \alpha_{t}(\phi\circ\varphi, g, M),$
and then by the above calculation (\ref{equation: pontryagin thom intersection}) we have, 
$\alpha_{t}(\widehat{\varphi}, g, M) = -\alpha_{t}(f, g, M).$
Now, let $\widehat{f}: (P, \partial P) \longrightarrow (M, \partial M)$ be the embedding obtained by forming the boundary-connected-sum of of the submanifolds 
$$(f(P), f(\partial P)), \; (\widehat{\varphi}(D^{p}), \widehat{\varphi}(\partial D^{p})) \; \subset \; (M, \partial M),$$
along an arc in $\partial M$ that is disjoint from $g(\partial Q)$. 
 Clearly this embedding is homotopic (as a map) to $f$.
 We emphasize that homotopy taking $\widehat{f}$ to $f$ will not be constant on the boundary of $P$. 
 We then have,
 $$
\alpha_{t}(\widehat{f}, g, M) = \alpha_{t}(f, g, M) + \alpha_{t}(\widehat{\varphi}, g, M) = \alpha_{t}(f, g, M) - \alpha_{t}(f, g, M) = 0.
 $$ By Theorem \ref{theorem: hatcher quinn main theorem} there is a
diffeotopy (relative the boundary of $M$) that pushes $\widehat{f}(P)$
off of $g(Q)$.  Now since $f$ is homotopic to $\widehat{f}$, it
follows that $f$ is homotopic (through maps sending $\partial P$ to
$\partial M$) to an embedding with image disjoint from $g(\partial
Q)$.  We then apply \cite[Theorem 1.1]{HQ 74} to conclude that the
embedding $f$ is actually isotopic (rather than just homotopic) to such an embedding with image
disjoint from $g(Q)$. 
Then Theorem \ref{theorem: boundary disjunction} follows by isotopy extension.
\end{proof}

\end{document}